\newcommand{\ve}{\varepsilon}
\newcommand{\vp}{\varphi}
\newcommand{\bmu }{{\bm \mu }}
\newcommand{ \bX  }{  {\bm X} }
\newcommand{ \bG  }{  {\bm G} }
\newcommand{\C}{ \mathscr{C} }
\newcommand{ \bC }{  { \bm \C  } }
\renewcommand{\c}{ \mathcal{C} }
\newcommand{\bF}{{ \bm F}  }
\renewcommand{\bf}{ {\bm f} }
\newcommand{\tr}{\textnormal{Tr}}
\newcommand{\<}{\left\langle}
\renewcommand{\>}{\right\rangle}
\renewcommand{\d}{\mathrm{d}}
\renewcommand{\L}{\mathscr{L}}
\newcommand{\R}{\mathbb{R}}
\newcommand{\N}{\mathbb{N}}
\newcommand{\calE}{\mathcal{E}}
\newcommand{\calS}{ \mathcal{ S } }
\newcommand{\M}{ \mathcal M  }
\renewcommand{\S}{\mathbb{S}}
\newcommand{\1}{\mathds{1}}
\newcommand{\h}{\textbf{(H1)}}
\newcommand{\hh}{\textbf{(H2)}}
\newcommand{\hhh}{\textbf{(H3)}}
\newcommand{\calF}{ \mathcal{F} }
\newcommand{\calR}{\mathcal{R}}
\newcommand{\calI}{\mathcal{I}}
\newcommand{\bc}{ \, \textbf{,} \, }
\newcommand{\sym}{{ \textnormal{sym}  }}
\newtheorem{theorem}{Theorem}[section]
\newtheorem{lemma}{Lemma}[section]
\newtheorem{proposition}{Proposition}[section]
\newtheorem{condition}{Condition}
\newtheorem*{hypothesis}{Hypothesis}
\theoremstyle{definition}   
\newtheorem{definition}{Definition}
\theoremstyle{remark}
\newtheorem{remark}{Remark}[section]
\newtheorem{remarks}[theorem]{Remarks}
\newcounter{listi}
\newenvironment{remarklist}{\begin{list}{{\rm(\roman{listi})}}{%
			\setlength{\topsep}{0mm}\setlength{\parsep}{1mm}\setlength{\itemsep}{0mm}%
			\setlength{\labelwidth}{1.3em}\setlength{\leftmargin}{1.2em}\usecounter{listi}%
	}}{\end{list}}
\renewcommand\ldots{\makebox[1em][c]{.\hfil.\hfil.}}
\let\oldtocsection=\tocsection
\let\oldtocsubsection=\tocsubsection
\let\oldtocsubsubsection=\tocsubsubsection
\renewcommand{\tocsection}[2]{  \hspace{0em}\oldtocsection{#1}{#2}}
\renewcommand{\tocsubsection}[2]{  \hspace{2em}\oldtocsubsection{#1}{#2}}
\renewcommand{\tocsubsubsection}[2]{\hspace{2em}\oldtocsubsubsection{#1}{#2}}
\numberwithin{equation}{section}
\begin{document}

	\begin{abstract}
		In this work, we generalize  M. Kac's original many-particle binary stochastic model   to derive a space homogeneous Boltzmann equation that includes a  linear combination of higher-order 
		collisional  terms. 
		First, we prove an abstract theorem
		about convergence
		from  a   finite  hierarchy  to an infinite  hierarchy
		of  
		coupled equations. 
		We apply this convergence theorem on hierarchies
		for marginals corresponding to  the generalized Kac model mentioned above.
		As a corollary, we prove propagation of chaos for the marginals associated to the generalized Kac model. 
		In particular, the first marginal converges towards the  solution of a 
		Boltzmann  equation including  interactions up to a finite order, and whose  collision kernel is of Maxwell-type with cut-off. 
	\end{abstract}

	\title[The Boltzmann Equation with Higher-order collisions]{Derivation of a Boltzmann equation with higher-order collisions from a generalized Kac model}

	\author{Esteban C\'ardenas}	
	\address[Esteban C\'ardenas]{Department of Mathematics,
		University of Texas at Austin,
		2515 Speedway,
		Austin TX, 78712, USA}
	\email{eacardenas@utexas.edu}
	
	\author{Nata\v{s}a Pavlovi\'c}

	\address[Nata\v{s}a Pavlovi\'c]{Department of Mathematics,
		University of Texas at Austin,
		2515 Speedway,
		Austin TX, 78712, USA}
	\email{natasa@math.utexas.edu}

	\author{William Warner}

	\address[William Warner]{Department of Mathematics,
		University of Texas at Austin,
		2515 Speedway,
		Austin TX, 78712, USA}
	\email{billy.warner@utexas.edu}

	\maketitle

	{\hypersetup{linkcolor=black}
		\tableofcontents}

	\section{Introduction}
	\label{section intro}
 		The aim of this paper is to  derive a  Boltzmann  equation for Maxwell molecules that incorporates higher-order collisions; 
 		we achieve that by generalizing M. Kac's original stochastic binary model \cite{Kac 1956}  via allowing multi-particle interactions. 
 With this purpose in mind, let us consider a space homogeneous gas of indistinguishable particles, moving in $d$-dimensional Euclidean space.
 The system is to be described by the probability density $f = f(t,v)$ 
 	of  finding a single particle with velocity  $v \in \R^d  $ at time $t\geq0$. 
 	The resulting Boltzmann-type equation  will be of the form 
 	\begin{equation}\label{boltzmann type equation}
 	\partial_t f 
 	=
 	\beta_1  \, 
 	Q_1 (f)
 	+
 	\beta_2     \, 
 	Q_2(f,f)
 	+ 
 	\cdots 
 	+
 	\beta_M     \, 
 	Q_M  (
 	{   f , \ldots, f } 
 	) \, .
 	\end{equation}
 	where $(\beta_K)_{K=1}^M$ is a normalized set of coefficients: $\sum  \beta_K=1$. 
 	Here, $    M   \in \N $ is the highest-order collision that will be relevant in our system, and 
 	$ (  Q_K  )_{ K     = 1 }^M$ are the $ K $-th collisional operators,  modelling the interactions between $K$ particles. 
 %	The collision kernels we derive will be independent of the velocities, and integrable with respect to the scattering angles--these are the so-called Maxwell-type interactions with cut-off. 
 	
 	\vspace{1mm}
 	
 	Since the Boltzmann equation was introduced by L. Boltzmann \cite{Boltzmann 1872} and J. Maxwell \cite{Maxwell 1867}, it has been the target of many mathematical studies. 
 	In particular, the problem of rigorously deriving a  Boltzmann equation with \textit{binary} interactions (of Maxwell-type) was first addressed by  M. Kac in his foundational work
 	\cite{Kac 1956}. 
 	By setting up an appropriate $N$-particle stochastic process, 
 	Kac was able to show that an equation of the form \eqref{boltzmann type equation} 
 	-- with the right hand side containing only the $Q_2$ term --
 	emerges from the many-particle dynamics in the $N \rightarrow \infty $ limit. 
 	The framework introduced in \cite{Kac 1956} is now known as the \textit{Kac model}, 
 	and there is an active field of research around it; 
 	its simplicity is a fertile playground for studying subtle questions that are otherwise very difficult to approach  in more complex models arising from kinetic theory. 
 	\textit{Propagation of chaos}, \textit{entropy production}, 
 	\textit{relaxation towards equilibrium} and \textit{well-posedness}
 	are among the most studied questions for the Kac model and its generalizations. 
 	For a partial survey of articles,   see e.g  
 	\cite{Kac 1956,MischlerMouhot2013,MischlerMouhotWennberg2015,BonettoLossVaidyanathan2013,Cortez2016,CortezTossounian2020,CarlenMustafaWennberg2015,Carrapatoso2015,HaurayMischler2014,Villani2003,Einav2011,BonettoGeisingerLossRied2018,DiaconisSaloff-Coste2000,Janvresse2001,CarlenCarvalhoLoss2001,CarlenCarvalhoLoss2003,Maslen2003,CarlenCarvalhoLeRouxLossVillani2010,CarlenGeronimoLoss2008,Hauray2016}
 	and references therein.

 	\vspace{1mm}

 	Derivation of the Boltzmann equation in the deterministic, space-inhomogeneous setting with hard spheres  has been  a major breakthrough in kinetic theory--the first proof in this direction was given by O. Lanford's  \cite{Lanford 1975}. 
 	More recently, this derivation program has been revisited in a modern perspective by I. Gallagher, L. Saint-Raymond  and B. Texier \cite{Gallagher Saint-Raymond Texier 2013}.  
 	On the other hand, derivations of  Boltzmann-type equations that include \textit{higher-order} collisions between the particles has just recently started to receive more attention. 
 	In \cite{AmpatzoglouPavlovic2019}, I. Ampatzoglou and the second author of this paper
 	derived the non-homogeneous Boltzmann equation for hard spheres, with  the relevant interactions being ternary. 	 In \cite{AmpatzoglouPavlovic2020}, the same authors were able to simultaneously include both binary and ternary interactions in their analysis--the   problem of including arbitrarily higher-order interactions remains open.  
 	We would also like to point out the recent work \cite{AmpatzoglouGambaPavlovicTaskovic2022} which implies that  
 	an equation of the type \eqref{boltzmann type equation} including a linear combination of collision operators can give better properties of solutions compared to the binary Boltzmann equation. Specifically, in  \cite{AmpatzoglouGambaPavlovicTaskovic2022}, I. Ampatzoglou, I. M. Gamba, M. Tasković
 	 and the second author of this paper, 
 	have shown that the simultaneous existence of binary and ternary collisions in a homogeneous Boltzmann-type equation yields better generation in time properties of moments and time decay, 
 	compared to when only binary or ternary collisions are considered. 
 	This gives additional motivation to study both derivation as well as analysis of Boltzmann equations with higher order collisions such as \eqref{boltzmann type equation}, 
 	which is what we do in this paper in the context of Kac's stochastic framework. 
 	
 	\vspace{1mm}
 	
 	More precisely,   we introduce an adaptation of M. Kac's original stochastic $N$-particle model that simultaneously includes interactions up to order $  M  \in \N, $ and prove that Eq.  \eqref{boltzmann type equation} emerges in the $N \rightarrow \infty $ limit. 
 	The model we propose is motivated by the work  of A.V Bobylev, C. Cercignani and I. M. Gamba 
 	\cite{gamba}, on well-posedness and self-similar solutions of 
 	an equation that incorporates higher-order collisions between Maxwell molecules.
 	Inspired by  \cite{Gallagher Saint-Raymond Texier 2013,AmpatzoglouPavlovic2020} 
 	we use hierarchy methods to obtain convergence from a certain finite hierarchy of equations to the infinite hierarchy, associated to the generalized Kac model. Propagation of chaos then follows as a corollary.

 	\subsection{Higher-order collisions
 	}  
 	Let us be now introduce higher-order collisions.
 	We shall not specify a concrete transformation map between pre- and post-collisional velocities, 
 	but rather work in a general setting that satisfies three conditions, given in the Hypothesis below.
 	We present some examples in Section \ref{section applications}. 
 	
 	\vspace{1mm}
 	
 	\noindent \textit{The Transformation Law.}
 	For every   $ K  = 1 , \ldots ,   M  $  
 	we assume that we are given a measurable space $\S_K$ with a probability measure $b_K$, 
 	together with a measurable map 
 	$$
 	T_K : \S_K \times \R^{d K } \rightarrow \R^{dK}  \  .
 	$$
 	We call $K$ the \textit{order} of the collision, 
 	$\S_K$ the space of \textit{scattering angles}, 
 	$b_K$ a   \textit{collision kernel} 
 	and
 	$T_K$ the \textit{transformation law}.

 	Throughout this work, we assume the following Hypothesis to be satisfied.
 	Let us denote   by $S_K$   the group of permutations of $K $ elements. 
We will abuse notation and use the same symbol to denote a permutation  $\sigma \in S_K$
and its action over the space $\R^{dK}$.
Namely, $\sigma $ stands for the function defined by 
 	$\sigma (V ) \equiv  (V_{\sigma(1)} , \ldots, V_{\sigma(K)}  ) $  for $V = (V_1, \ldots, V_K) \in \R^{dK}$. 
 	
 	\begin{hypothesis}\label{hypo1}
 		For  all $\omega \in \S_K $,  the map $ T^\omega_K \equiv T_K (\omega, \cdot ): \R^{dK } \rightarrow \R^{dK }$ is  linear.
 		Additionally:
 		\begin{enumerate}[label=\textnormal{\textbf{(H\arabic*)}}]
 			\item  $T^\omega_K$ is an isometry. 
 			\item 	For all $ \vp  \in C (\R^{dK })$   it holds that 
 			\begin{equation}\label{K-order collision}
 			\int_{\S_{K  }}  \vp  \big[ (T_K^\omega)^{-1 }  V    \big]    \d b_K(\omega )
 			=
 			\int_{\S_{K  }}  \vp [  T_K^\omega  V    ]  \d b_K(\omega ) \  , \qquad \forall  \, V \in \R^{dK } \ . 
 			\end{equation}
 			\item For all  $  \sigma  \in S_K  $ and 
 			$ \vp  \in C (\R^{dK })$
 			it holds  that   
 			\begin{equation}\label{symmetry condition}
 			\int_{\S_K}  \vp 
 			\big[
 		   (\sigma \circ T^\omega_K\circ \sigma^{-1}) \, V 
 		\big]
 			\
 			\d b_K (  \omega  ) 
 			=
 			\int_{\S_K}  \vp    \big[       T^\omega_K V      \big]
 			\
 			\d b_K (  \omega  )   \   , \qquad \forall \,  V \in \R^{d K }  \ . 
 			\end{equation}
 		\end{enumerate}
 	\end{hypothesis}

The conditions introduced above arise   when considering elastic collisions between particles whose pre- and post-collisional velocities are 
related by the formula
\begin{equation} \label{transformation law}
(v_1 , \ldots, v_K) 
\longmapsto
(v_1^* , \ldots, v_K^*) 
\equiv 
T_K^\omega (v_1, \ldots, v_K) \ , 
\end{equation}
where $\omega \in \S_K  $ is   a parameter that labels      the directions in which
the particles interchange momentum. With this interpretation in mind we can give   physical relevance to the above hypotheses. $\h$ states that there is \textit{conservation of kinetic energy}. $\hh$ states that, up to an average over the set of scattering angles, the transformation law $T_K$ is an \textit{involution}. $\hhh$ states that, up to an average over the set of scattering angles, the transformation law $T_K$ does not depend on the labeling of the particles, e.g there is no preferred order in which the particles can enter a  collision.

\begin{remarks}
A few comments are in order regarding $\h-\hhh$. 
\begin{remarklist}
	\item 
		Even though our methods can be    adapted to include transformation laws that do not satisfy
	$\hhh$, we include it to make the exposition simpler.
	Similar assumptions have previously been made in the literature;  
	see for instance Definition 2.1 \textit{(iv)}
	in \cite{CarlenCarvalhoLoss2019} for an example
	in the context of the Quantum Kac Model. 
	\item 
	 From a mathematical point of view, we include $K=1$  since it presents no additional difficulties. 
	Physically, it does not correspond to collisions between the particles, but can be understood as an interaction between a single particle and its medium;  
	a famous example is the  thermostat model  \cite{BonettoLossVaidyanathan2013}.
	\item 
		In order to accommodate certain models, 
	we \textit{do not} require  {conservation of momentum}   to hold: 
	\begin{equation}
	\textstyle 
	\sum_{i =1}^K v_i ^* = \sum_{i=1}^K v_i \ .
	\end{equation}
	We refer the reader to Section \ref{section applications} for details.
\end{remarklist}
\end{remarks}

\noindent \textit{The Collisional Operators.}
In this setting, the transformation law $T_K$ 
defines
 	the collisional operators
 	 		$Q_K: \prod_{i=1}^K L^1(\R^d )  \rightarrow L^1 (\R^d ) $
present in the Boltzmann-type equation
  \eqref{boltzmann type equation}
 	that we derive.
 	More precisely, for $K\geq 2$ these operators are of the form 
	\begin{align}
	\label{collision operators}
	Q_K 
	( f_1, \ldots , f_K )  ( v_1  )
	&    :=
	K
	\int_{\S_{K  } \times \R^{d ( K  -  1  )   }}  
	\Big(
	(   \otimes_{\ell =  1 }^K  f_\ell  )
	(T^\omega_{K  }    V    )
	-
	(   \otimes_{ \ell =  1 }^K  f_\ell  )			 
	(   V   )
	\Big)      \d b_K( \omega )      d v_{2 } \ldots  d v_{K  } 
	\end{align}
 	and analogously for $K=1$, with $\S_K\times \R^{d(K-1)}$ being replaced by $\S_1$. 
 	Notice above that the kernel of $Q_K$ is independent of the \textit{relative velocities}, and 
 	is \textit{integrable} with respect to the scattering angles. 
 	In the context of kinetic theory, such a model can be interpreted as a  gas of Maxwell molecules with an angular cut-off.

%
%
%
%\begin{remark}
%	Even though our methods can be    adapted to include transformation laws that do not satisfy
%	$\hhh$, we include it to make the exposition simpler.
%	Similar assumptions have previously been made in the literature; see for example  [..., Remark 2.2]. 
%\end{remark} 
%\begin{remark} 
%	 From a mathematical point of view, we include $K=1$  since it presents no additional difficulties. 
%	Physically, it does not correspond to collisions between the particles, but can be understood as an interaction between a single particle and its medium;  
%	a famous example is the  thermostat model  \cite{LossBonetto2013}.
%	
%\end{remark}
%
%
%\begin{remark}
%	In order to accommodate certain models, 
%	we \textit{do not} require  {conservation of momentum}   to hold: 
%	\begin{equation}
%	\textstyle 
%	\sum_{i =1}^K v_i ^* = \sum_{i=1}^K v_i \ .
%	\end{equation}
%	We refer the reader to Section \ref{section applications} for details.
%\end{remark}

\vspace{2mm}
 	
\noindent \textit{{Extension of the Transformation Law to $N$ Particles}\label{extension subsection}}.
Let $K \in \{ 1, \ldots, M  \}$ be a fixed order of collision. 
We would like to define  collisions of order $K$, that happen in a system of $N$ particles; 
their velocities will be recorded by the so-called \textit{master vector}
$V = (v_1, \ldots, v_N) \in \R^{dN}$. 
In order to \textit{select} the particles that undergo a collision, let us denote by 
 	\begin{equation}
 	\label{index set} 
 \calI(K) := \big\{    (i_1 , \ldots,  i_K) \in \{ 1, \ldots , N\}^K  \ :   i_j \neq i_\ell \text{ for } j \neq \ell \big\}
 	\end{equation}
 	the set of all pairwise different    indices  contained in   $\{ 1, \ldots , N\}^K$. 
 	Note that we do not require the indices to be ordered, i.e. $ i_1 < \cdots < i_K$ may not hold.
 
 \vspace{1mm}
 
 Next, let us fix a collection of indices $    (i_1 , \ldots, i_K )   \in \calI(K)$ and
consider  a permutation
 $\sigma \in S_N$
 	 satisfying $\sigma (1) = i_1 , \cdots , \sigma (K) = i_K$. 
 	 Then, 
 we will work extensively with the new linear map 
 	\begin{equation}
 	\label{T extension} 
 		T^\omega_{i_1 \cdots i_K}  := 
 		\sigma \circ ( T^\omega_K \times id_{\R^{d (N-K) }} ) \circ \sigma^{-1 }
 		:  \R^{dN} \rightarrow \R^{dN}  \   , \qquad 
 		\omega \in \S_K \ .
 	\end{equation}
 	In   words, the map  
 		$T^\omega_{i_1 \cdots i_K} $
 		selects the particles labeled by indices $(i_1, \ldots, i_K)$
 		and updates their 
 		velocities according to the transformation law  \eqref{transformation law}, i.e.
 		$(v_{i_1} , \ldots, v_{i_K}) \mapsto (v_{i_1}^* , \ldots, v_{i_K}^*)$, while leaving the rest invariant. 

 \begin{remark}
 For the special case in which the indices are ordered, meaning that 
$i_1 < \cdots  < i_K$, 
one can   write 
for   $V = (v_1, \ldots ,v_N) \in \R^{dN }$: 
\begin{equation}
    T_{i_1 \cdots i_K}^\omega
    V
    =
    (
    v_1 , \cdots ,v_{i_1 -1 },  v_{i_1}^* , v_{i_{1}+1},
    \cdots, 
    v_{i_K -1 } , 
    v_{i_K}^*,
    v_{i_K +1 }
    , 
    \cdots, 
    v_n
    ) \ , 
\end{equation}
  where $ (v_{i_1}^* , \ldots, v_{i_K}^* ) \equiv T_K^\omega (v_{i_1} , \ldots, v_{i_K}) \in \R^{dK }. $
 	   
 \end{remark}

  \subsection{Generalized Kac Model}
 As in M. Kac's original approach for deriving a binary Boltzmann equation, we shall construct a Markov process describing
 the $N$-particle system
 and study the relevant \textit{Master Equation}
 governing its dynamics. 
 	Details of this  construction  can be found in Section \ref{master equation section}.
 
\vspace{1mm}

  Our Master equation is then given by
 	 	\begin{equation}
 	 	\label{master eq}
 	\begin{cases}
 	\partial_t f_N = \Omega f_N \\
 	f_N( 0   ) = f_{N,0} \in 
 	L^1_{\mathrm{sym}}(   \R^{dN} ) 
 	\end{cases} \,  ,
 	\end{equation}
 	where $L^1_{    \mathrm{sym}  }$ stands for the space of $L^1$ functions, invariant under permutation of their variables.
 	The generator $\Omega: L^1 (\R^{dN}) \rightarrow L^1 (\R^{dN})$ is the bounded linear operator determined  by the formula 
 	\begin{equation}\label{omega}
 	\Omega \,  f 
 	= N \!  
 	\sum_{K=1 }^M
 	\beta_K 
 	\! \!  \! \! 
 	\sum_{ (i_1 , \cdots   , i_K ) \in \calI(K) } 
\!  
 	\frac{1}{  K !  \binom{N}{ K  }   } 
 	\int_{\S_K   }
 	\big( 
 	f \circ  T^\omega_{  i_1  \cdots  i_K   }  - f  \big)  \, \d  b_K (\omega )    \ , 
 	\qquad 
 	f \in L^1(   \R^{dN} ) \ ; 
 	\end{equation}
we recall that the normalized coefficients $(\beta_K)_{K=1}^M$ where first introduced in \eqref{boltzmann type equation}. 
We note that,  since $\Omega$ is a bounded linear operator, 
the solution of the Master equation has regularity   $ f_N \in C^\infty_t (L^1_V) $.  
 	
 	% 
 	% \begin{remark} 
 	%The solution of Eq. \eqref{master equation}  may  be expressed as a norm convergent power series. Namely, it holds that 
 	%$
 	%f_N (t) = \sum_{n = 0 }^\infty   (  t^n / n! )  \,   \Omega^n \, f_{N,0} . 
 	%$
 	%As a historic remark, let us  mention that it was this representation  that M. Kac used in his original work \cite{Kac 1956} to show that   \textit{propagation of chaos} holds. 
 	%We will not make use of this formula, but instead work with the BBGKY hierarchy associated to the Master equation \eqref{master equation} and follow the analysis of \cite{Gallagher Sain-Raymond Texier 2013}. 
 	% \end{remark} 

 	\subsection{Our results in a nutshell}
 	Let us briefly  explain the three main results presented in this paper. 
 	
 \vspace{2mm}
 	
    \textit{(1) Abstract Convergence of Hierarchies}. 	
Let  $  (  X^{(s)}  )_{s \in \N}$ be a collection of Banach spaces. 
For each $N \in \N $
 we  consider the      following    system of equations 
 	\begin{equation} 
 	\label{finite hierarchy} 
 	\partial_t  f_N^{(s)} 
 	\ = \ 
 	\C_{s , s}^N f_N^{ ( s ) } 
 	\  + \ 
 	\cdots 
 	\ +  \ 
 	\C_{s, s+ M-1  }^N 
 	f_N^{ ( s + M-1 )  }  \ , 
 	  \quad s \in \N
 	\end{equation}
 	which we call    the   \textit{N-hierarchy}. 
 	Here, each unknown 
 	$f_N^{(s)}  :  [0,T] \rightarrow X^{(s)} $
is a  time-dependent,  vector-valued quantity,
 	and each   operator 
 	$\C_{s,s+k}^N : X^{(s+k)} \rightarrow X^{(s)}$
 	is linear, bounded,  and its  
 	operator norm 
 	grows at most linearly with $s \in \N$,   uniformly in $ N \in \N$.  
 	We show that if--in an appropriate sense and under additional mild assumptions--the following limits hold
$$
 	  \lim_{ N  \rightarrow \infty }  \C_{s ,s +k}^N = \C_{s ,s+ k }^ \infty    
 	  \quad 
 	  \text{ and }
 	  \quad 
 		   \lim_{ N  \rightarrow \infty } f_N^{(s)} ( 0) = f^{(s)} (0)  
 		   \qquad \forall s\in \N  , \ \forall 0 \leq k \leq M -1  \  ,
$$
 	then it also holds   for later times $t \in [0,T]$ that 
 	$$ 
 	\lim_{N \rightarrow \infty } f_N^{ (s)} (t) = f^{(s)} (t),
 	$$
 	where 	the limiting objects satisfy the  associated  infinite system of equations
 	\begin{equation}
 	\label{infinite hierarchy}
 	\partial_t f^{(s)}		\nonumber 
 	\  = \ 
 	\C_{s , s }^\infty  f^{( s) } 
 	\ + \  \cdots 
 	\ + \ 
 	\C_{s, s+ M-1  }^\infty  f^{ ( s + M-1 ) } 
 	\ , 
 	\qquad s \in \N 
 	\end{equation}
 	which we call the \textit{infinite hierarchy}.   
 	See Definition \ref{definition 3}, 
 	Definition \ref{definition convergence} and
 	Theorem \ref{thm convergence} for details. 
 	
 	\vspace{2mm}
 	
 	  \textit{(2) BBGKY to Boltzmann Hierarchy}. 
 	Starting from the solution $f_N  $  of the Master equation Eq. \eqref{master eq}, 
 	we show that its sequence of marginals $  f_N^{(s)}  $  
 	(defined through a partial trace procedure, see \eqref{def partial trace}) 
 	satisfies a finite system of equations of the form Eq. \eqref{finite hierarchy}, which we shall refer to as the \textit{BBGKY hierarchy}. 
 	Under our assumptions on the transformation law $T^\omega_K$ and  the kernel $\d b_K(\omega)$, 
 	every condition of the abstract convergence result is satisfied. 
 	Consequently, we can prove that there is convergence to an infinite hierarchy, which we shall refer to
 	as the \textit{Boltzmann hierarchy}. A precise statement can be found in Theorem \ref{thm bbgky to boltzmann}.

 		\vspace{2mm}
 	
 	   \textit{(3) Propagation of Chaos.}
 This result concerns the {derivation} of the Boltzmann equation \eqref{boltzmann type equation}.
Namely, we assume that 
the  initial data of  the Master equation
$f_{N,0} \in L^1_{\mathrm{sym}}(  \R^{dN })$	
is  such that its sequence of marginals 
		$ (   f_{N,0}^{ (s) }  )_{s  \in \N }$ 
		converges weakly
		to a  tensor product
		$ (  f_0^{\otimes s }   )_{s \in \N } $ for some $f_0 \in L^1(\R^d )$. 
		We then prove that	for all    $t\geq0 $, it holds in the weak sense that
		\begin{equation}  
		\label{limit of marginals}
		\lim_{N \rightarrow \infty }
		\    f_N^{(s)} (t, \cdot )    
		=
	  f(t , \cdot )^{\otimes s}   
		\end{equation}
		where $f(t,v )$ is the solution of the Boltzmann equation \eqref{Boltzmann equation}, with initial data $f_0 $. 
 	This is the content of Theorem \ref{thm prop of chaos}.

	\vspace{2mm}
	
	Now we provide some context for our results with respect to applications and previous works. 
	\begin{enumerate} 
\item {\it Why is our transformation law abstract? }
We  decided to require that the transformation law $T_K$ satisfies the general hypothesis $\h-\hhh$. 
This allows us to give several examples of transformation laws satisfying the hypotheses (see Section \ref{section applications}). 
Consequently, Theorem \ref{thm bbgky to boltzmann} and Theorem \ref{thm prop of chaos} apply in each of those cases. 
This is in contrast with respect to most of previous works in the field, see e.g. \cite{AmpatzoglouPavlovic2020,Gallagher Saint-Raymond Texier 2013,BonettoLossVaidyanathan2013} that typically address specific examples. 

\item {\it Why do we have Theorem \ref{thm convergence}?}
We also remark that our derivation of the generalized Boltzmann equation \eqref{boltzmann type equation}, as formulated in Theorem \ref{thm bbgky to boltzmann}, is a consequence of the abstract convergence result stated in Theorem \ref{thm convergence}. The motivation for such a level of generality is two-fold: it allows us to identify the estimates that are sufficient for the convergence process, and it provides efficient and robust notation that can be welcome when treating convergence of hierarchies.     

\item {\it Why do we have Theorem \ref{thm bbgky to boltzmann}, rather than just Theorem \ref{thm prop of chaos}? }
In contrast to Kac's original approach \cite{Kac 1956}, we prove propagation of chaos as a consequence of   convergence of hierarchies (Theorem \ref{thm convergence}). We are therefore able to handle  more general initial data-- we do not require tensorized initial data. 
	
\end{enumerate} 	

By completion of this work, the authors became aware of the works by Ueno \cite{Ueno1969} and Tanaka \cite{Tanaka1969.1,Tanaka1969.2}, 
 	in the late sixties. These works, as is the case with works on Kac's model, proved propagation of chaos for Markov processes driven by bounded generators, which include some of our models. Their proofs are based on  expansion methods, pioneered by Kac \cite{Kac 1956} and then further developed by McKean \cite{McKean1967}. As mentioned above, our result is more general in the sense that we prove convergence of hierarchies (Theorem \ref{thm bbgky to boltzmann}) for more general initial data and obtain propagation of chaos as a corollary.  Hence the approaches can be understood as complementing each other in terms of methods as well as the results.

	 \vspace{1mm}

Finally, let us mention that, in the last few decades, the problem of deriving an
 explicit 
 \textit{convergence rate}
 for the limits 
 \eqref{limit of marginals} 
 has received special attention and many models have been investigated; 
 see e.g \cite{MischlerMouhot2013,MischlerMouhotWennberg2015,GrahamMeleard1997,Cortez2016,CortezTossounian2020,Carrapatoso2015}.
 Such rate -- besides naturally depending on time $ t \in \R$, 
the number of particles $N \in \N$,
 and the order of the marginals $ s \in \N$ -- comes at the cost of requiring stronger assumptions on both 
 the initial data of the system, and the test functions. 
 Even though our methods do not provide a convergence rate, 
 we ask for minimal assumptions on these objects. 
 We believe that, for the model at hand and under appropiate assumptions, a convergence rate can be derived in the same spirit as the works mentioned above, possibly depending non-trivially on the total number of interactions $M$. 
 Such study  is beyond the scope of this paper.

	\vspace{2mm} 
	\noindent 
	\textit{Organization of the paper}. 
	In Section \ref{section main results} we give precise statements of our three main results. 
	In Section \ref{section applications} a collection of examples that fit our framework are given, and a few adaptations are mentioned. 
	In Section \ref{master equation section} we give the details of the construction of the Markov process that gives rise to the Master Equation \eqref{master eq}. 
Theorem \ref{thm convergence} is proven in  Section \ref{section proof convergence}, 
whereas Theorems \ref{thm bbgky to boltzmann} and \ref{thm prop of chaos} 
are proven in Section \ref{section proof derivation}.
Required well-posedness results are proven in Section \ref{appendix wp}, and we include  Appendix \ref{appendix markov}
for a review of the theory of Markov Processes.

\bigskip
\noindent {\textbf{Acknowledgments.}}
N.P. would like to thank Sergio Simonella and Mario Pulvirenti for discussions that inspired this research direction. Also the authors would like to thank 
Joseph K. Miller and Chiara Saffirio for helpful discussions. E.C. gratefully acknowledges support from  the Provost’s Graduate Excellence Fellowship at The University of Texas at Austin and from the NSF grant DMS-2009549. N.P. gratefully acknowledges support from the NSF under grants No. DMS-1840314, DMS-2009549 and DMS-2052789. W.W. gratefully acknowledges support from the NSF grant DMS-1840314.

	\section{Main Results}\label{section main results}
	In this section, we state in detail our three main theorems. 
	The first one is an abstract convergence result; in order to state it, 
introducing	
we dedicate the next subsection to in the necessary functional analytic spaces. 
	This point of view has the advantage of using only minimal estimates satisfied by collision operators. 
	Convergence   then happens naturally under the right assumptions.

	\subsection{The functional framework} \label{subsec-functional}
 Let  $ \{ X^{(s)}  \}_{s \in \N}$ be a collection of Banach spaces, 	and consider  their  direct sum 
	\begin{equation}
	\label{direct sum}
	X  
	:= 
\bigoplus_{s \in \N }
 X^{(s)}   \ . 
	\end{equation}
For any given $\mu \in \R  $ we consider the subspace of exponentially weighted sequences
	\begin{equation}
	\label{mu norm}
	\textstyle 
	X_{  \mu }  :=
	\big\{  F= (f^{(s)})_{s \in \N}  \in X 
	\  :   \ 
	\| F \|_{ \mu }  
	:= 
	\sup_{s \in \N} e^{ \mu s} \|  f^{(s)} \|_{   X^{(s)}} 
	< \infty  
	\big\}
	\end{equation}
We introduce time dependence as follows. 
Fix $T>0$ and consider the 
weight function  
	\begin{equation}
	 \bmu : [0,T] \rightarrow \R   \, , \  \qquad \bmu (t) := - t / T    \, . 
	\end{equation}
	We  define the Banach space of uniformly bounded, 
	time-dependent sequences  as 
	\begin{equation}
	\textstyle 
	\bX_{   \bmu } 
	:= 
	\{ 
	\bF
	: [0,T ] 
 \rightarrow X 
 \ :   \ 
 \|  \bF    \|_{\bmu} := 
 \sup_{t \in [0,T]} \|  \bF (t)     \|_{ \bmu(t) } < \infty
 \}  \ . 
	%\big\{    \bF  = (  \bf^{(s)})_{s \in \N }  : [0,T] \rightarrow X  
	%\  :  \ 
	%\| \bF   \|_{ \bmu} 
	%:=
	%\sup_{t \in [0,T]}
	%\|    \bF(t)		\|_{\bmu (t)}
	%< \infty    \big\}
	%
	%\!   \!  \quad 
	%\textnormal{ where }
	%\!  \!  \quad
	%\|   \bF   \|_{\bmu }
	%: =
	%\sup_{t \in   [0,T]   } \sup_{ s \in \N } e^{\bmu(t) s }  \|  \bf^{(s)}  (t) \|_{ X^{ (s) }  } \ . 
	\end{equation}

	\subsubsection{The N-hierarchy}

	In this subsection, 
we introduce an abstract version of the BBGKY hierarchy 
usually found in models that arise from kinetic theory. 
	Before we describe this  in detail, let us introduce the following convenient notation:
	given  $K  = 1  , \ldots, M $, 
	we work interchangeably with the \textit{lower case} quantities $m$ and $k$ defined through   
	\begin{equation}
	M = m + 1 
	\qquad
	\textnormal{and}
	\qquad 
	K = k+ 1 \ . 
	\end{equation}

We assume that for    $ N \geq M  $ and  $s   \in \N $ we are given a collection of bounded linear transformations 
	\begin{align*}
	\C_{s , s    }^N  
	& : 
	X^{(s)} 
	\longrightarrow 
	X^{(s) } \\
	& \quad  \vdots \\ 
	\C_{s , s+ m }^N   
	&  : 
	X^{(s+m)}
	\longrightarrow  
	X^{(s)}
	\end{align*}
	which we   refer to
	as the \textit{N-hierarchy} operators. 
	Intuitively, one may think of these operators    as the components of  an infinite  matrix, whose entries are non-zero only within a distance $m$ above the diagonal. 
Let us note that, in this framework,  the collection of operators $\{ \c_{s,s+k}^N \}_{s=1,k=0}^{\infty,m}$ may be infinite. In applications, however, they are usually a finite collection -- see for example Remark \ref{finite remark}.

	\vspace{1mm}

	To the collection of operators mentioned above, we associate the following    system of equations, from now on referred to as the \textit{N-hierarchy}
	\begin{align}\label{BBGKY equation}
	\begin{cases}
	\partial_t f_N^{  (s) } \,  \ = 
	\,  \  
	\, 
	\C_{s,s }^N f_N^{(s )} 
	\,  + \, 
	\cdots  \,  +  \,  
	\, 
	\C_{s, s+ m }^N f_N^{(s + m )}   &    \\
	f_N^{(s)}(0)   \!  \ = \   f_{N,0}^{ (s) } \in X^{(s)}& 
	\end{cases}   \quad  s \in \N   \ .
	\end{align}
	As will become clear during the 
	proof of Theorem \ref{thm convergence}, it will be convenient  to write the $N$-hierarchy in mild form. To this end, we consider
	the linear operator 
	$  \bC^N    :  X  \rightarrow X  $    
	defined for $F  = (f^{(s)}  )_{ s \in \N} $ as   
	\begin{align} 
	\label{C N}
	& (\bC^N F   )^{(s)} \ := \ 
	\C_{s,s  }^N 
	f^{(s )}
	\  + \ 
	\cdots  \  +  \  
	\C_{s, s+ m }^N f^{(s + m )}  \   .  
	\end{align} 
	
	\begin{definition}
	We say that $\bF_N = (f_N^{(s)})_{s \in \mathbb{N}} \in  \bX_\bmu $ is a mild solution to the \textit{N-}hierarchy
	\eqref{BBGKY equation}
	with initial condition 
	$F_{N,0} = (f_{N,0}^{(s)})_{s \in \mathbb{N}} \in X_0 $ if  
	    \begin{align}
	\label{bbgky mild solution}
	&    \bF_N(t) = F_{N,0}
	+
	\int_0^t 	
	\bC^N   
	\bF_N   
	(\tau )  \, \d \tau     \ , 
	\qquad \forall  t \in [0,T] \ . 
	\end{align} 
	\end{definition}
	
	%The system of equations \eqref{BBGKY equation} 
	%is now equivalent to the following single integral equation 
	%\begin{align}
	%\label{bbgky mild solution}
	%&    \bF_N(t) = F_{N,0}
	%+
	%\int_0^t 	
	%\bC^N   
	%\bF_N   
	%(\tau )  \, \d \tau   
	%\end{align}
	%where $ t \in [0,T]$, and where we have denoted 
	%$F_{N,0} = (f_{N,0}^{(s)})_{ s\in \N}.$

We will work under the following assumption.

		\begin{condition}  
			\label{condition 1}
			   There exist
			constants $\{ R_k\}_{k=0}^m$, independent of $s$ and $N$, such that for all
			$ k = 0 , \ldots, m $ there holds 
			\begin{equation}\label{BBGKY estimate}
			\|  \C^N_{s, s + k }  f^{(s + k  )} \|_{ X^{(s)}  } 
			\ \leq \ 
			R_k   \,  s \,  \|  f^{ (s+ k   )  } \|_{  X^{(s+k )} } 
			\, , 
			\qquad 
			f^{ (s +    k )}
			\in 
			X^{(s+ k )} \ .
			\end{equation}
	\end{condition}
 
	Under Condition \ref{condition 1}, the following well-posedness result holds. A proof can be found in Section \ref{appendix wp}.
	
	\begin{proposition}\label{corollary 1}
		Assume that the $N$-hierarchy operators  satisfy  Condition \ref{condition 1}. 
		Then, for all 
		$T
		< 
		(   \sum R_k  e^k  )^{ -1 }
		$
		and $F_{N,0} \in X_{0}$, 
		there is a unique mild solution 
		$ \bF_N \in  \bX_{\bmu }$
		to the $N$-hierarchy 
		\eqref{BBGKY equation}.		
		In addition, it holds that 
		\begin{equation}
		\| \bF_N \|_{ \bmu} 
		\leq 
		( 1 - \theta_1)^{ -1 }
		 \| F_{N,0} \|_{  0} \ , 
		 \qquad
		 \textstyle{with}
		 \qquad
		 \theta_1 = T \sum_{k=0}^m R_k e^k \in (0,1) \, .
		\end{equation}
	\end{proposition}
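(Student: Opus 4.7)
The plan is to run a standard Picard fixed-point argument in the Banach space $\bX_{\bmu}$, using the time-decreasing weight $\bmu(t) = -t/T$ to absorb the factor of $s$ coming from Condition \ref{condition 1}. Introduce the Duhamel map
\[
\Phi(\bF)(t) := F_{N,0} + \int_0^t \bC^N \bF(\tau) \, \d\tau, \qquad t \in [0,T],
\]
so that mild solutions of \eqref{BBGKY equation} are precisely the fixed points of $\Phi$. The proof reduces to showing that $\Phi$ is a contraction on $\bX_{\bmu}$ with Lipschitz constant $\theta_1$.

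The core estimate is obtained component-wise. For $\bF = (f^{(s)})_{s \in \N} \in \bX_{\bmu}$, Condition \ref{condition 1} gives
\[
e^{\bmu(t)s} \big\| \big(\Phi(\bF)(t) - F_{N,0}\big)^{(s)} \big\|_{X^{(s)}} \;\leq\; \int_0^t \sum_{k=0}^m R_k \, s \, e^{\bmu(t)s} \, \|f^{(s+k)}(\tau)\|_{X^{(s+k)}} \, \d\tau.
\]
Using $\|f^{(s+k)}(\tau)\|_{X^{(s+k)}} \leq \|\bF\|_{\bmu} \, e^{-\bmu(\tau)(s+k)}$, together with $\bmu(t)-\bmu(\tau) = -(t-\tau)/T$ and $-\bmu(\tau)k \leq k$, the integrand is dominated by $R_k e^k \, s \, e^{-(t-\tau)s/T} \|\bF\|_{\bmu}$. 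The elementary identity $s \int_0^t e^{-(t-\tau)s/T} \, \d\tau \leq T$ -- which is the crucial place where the linear factor of $s$ is consumed -- then yields
\[
\|\Phi(\bF) - F_{N,0}\|_{\bmu} \;\leq\; \theta_1 \, \|\bF\|_{\bmu}, \qquad \theta_1 := T\sum_{k=0}^m R_k e^k.
\]

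Combined with the observation that $\|F_{N,0}\|_{\bmu} = \|F_{N,0}\|_0$ (since $e^{\bmu(t)s} \leq 1$ for every $t \in [0,T]$ and $s \in \N$, with equality at $t=0$) and with the fact that the same estimate, applied to the difference $\Phi(\bF_1) - \Phi(\bF_2)$, gives $\|\Phi(\bF_1) - \Phi(\bF_2)\|_{\bmu} \leq \theta_1 \|\bF_1 - \bF_2\|_{\bmu}$, one concludes that $\Phi$ is a contraction on $\bX_{\bmu}$ as soon as $\theta_1 < 1$, i.e.\ $T < (\sum_k R_k e^k)^{-1}$. Banach's fixed-point theorem then produces the unique mild solution $\bF_N$, and the a priori bound $\|\bF_N\|_{\bmu} \leq (1-\theta_1)^{-1} \|F_{N,0}\|_0$ follows by rearranging the fixed-point identity.

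The only conceptually delicate point is the absorption of the linear-in-$s$ growth of $\|\C^N_{s,s+k}\|$ by the exponential factor $e^{(\bmu(t) - \bmu(\tau))s}$ generated by the affine weight. Without this mechanism the Picard iteration would diverge, since the hierarchy operators are not uniformly bounded in $s$; choosing $\bmu$ affine and decreasing is precisely what renders $s \int_0^t e^{-(t-\tau)s/T} \, \d\tau$ uniformly bounded in both $s$ and $t$, making the contraction argument go through.
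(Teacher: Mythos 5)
Your proposal is correct and follows essentially the same route as the paper, which proves the analogous Proposition for the infinite hierarchy by the identical Banach fixed-point argument in $\bX_{\bmu}$ (the paper bounds $\int_0^t e^{-\bmu(\tau)(s+k)}\,\d\tau \leq \tfrac{T}{s+k}e^{-\bmu(t)(s+k)}$ where you bound $s\int_0^t e^{-(t-\tau)s/T}\,\d\tau \leq T$, but these are the same absorption of the linear-in-$s$ factor and yield the same constant $\theta_1$). No gaps.
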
 
	
	\subsubsection{The infinite hierarchy}

	If the   $N$-hierarchy operators 
	  admit  a formal limit when $ N \rightarrow \infty $, 
	we would like to understand the solutions of the infinite hierarchy they generate. 
	To this end, for each $s \in \N$ we will  consider a collection  of bounded linear transformations
	\begin{align*}
	\C_{s , s  }^\infty  &   :  X^{(s)}  \longrightarrow   X^{(s)}  \\
	& \vdots \\ 
	\C_{s , s+ m }^\infty    &  :  
	X^{(s+m)} \longrightarrow  
	X^{(s)} 
	\end{align*}
	which we call the \textit{infinite hierarchy operators}.
	%Similarly as before, we assume the following condition to hold. 

	\vspace{1mm}

	To these operators we associate the  \textit{infinite hierarchy},
	defined as the infinite system of equations  given by 
	\begin{align}
	\label{infinite hierarchy equation}
	\begin{cases}
	\partial_t f^{  (s) } \,  \ = \,  \ 
	\C_{s,s  }^\infty f^{( s)} 
	\,  + \, 
	\cdots  \,  +  \,  
	\C_{s, s+ m }^\infty f^{(s + m )}   &    \\
	f^{(s)}(0)   \!  \ = \   f_{0}^{ (s) } \in   X^{(s)}& 
	\end{cases} \quad s  \in \N \ . 
	\end{align}
	The  mild form of the infinite hierarchy is defined analogously. 
	Namely, we consider  the linear operator
	$  \bC^\infty     :  X  \rightarrow X $   
	defined for $F  = (f^{(s)}  )_{ s \in \N} $ as   
	\begin{align} 
	\label{C infinity}
	& (\bC^\infty F   )^{(s)} \ := \ 
	\C_{s,s }^\infty
	f^{(s )}
	\  + \ 
	\cdots  \  +  \  
	\C_{s, s+ m }^\infty  f^{(s + m )}.  
	\end{align} 

	\begin{definition}
	    We say that 
	    $\bF = (f^{(s)})_{s \in \mathbb{N}} \in \bX_\bmu $ is a mild  solution to the infinite hierarchy
	    \eqref{infinite hierarchy equation}
	    with initial condition $F_0 = (f^{(s)}_0)_{s \in \mathbb{N}} \in X_0 $ if  
	    \begin{align}
	\label{boltzmann mild solution}
	\bF(t) = F_{0}
	+
	\int_0^t 	
	\bC^\infty   
	\bF  
	(\tau )  \, \d \tau   \ , 
	\qquad t \in [0,T] \ . 
	\end{align}
	\end{definition}

	 We shall assume that the infinite hierarchy operators satisfy an analogous estimate as the one introduced in Condition \ref{condition 1}. 
	 Namely, 
	
	\begin{condition}\label{condition 2}
		There exist constants $  \{   \rho_k  \}_{k=0}^m$
		such that for all $ k   =  0 , \ldots \, , m $ and for all $s \in \mathbb{N}$  
		\begin{equation}\label{Boltzmann hierarchy estimate}
		\|  \C^\infty_{s, s + k }  f^{(s + k  )} \|_{X^{(s)}} 
		\ \leq \ 
		\rho_k 
		\,  s \,  \|  f^{ (s+ k   )  } \|_{X^{(s+k)} } \, , \qquad  f^{ (s +  k )} \in  X^{(s+k)}  \ .
		\end{equation}
	\end{condition}
	
The following well-posedness result is then available. 
A proof can be found in Section \ref{appendix wp}.
	
	\begin{proposition}\label{corollary 2}
		Assume that the infinite hierarchy operator $\C^\infty$ given in  \eqref{C infinity}
		satisfies Condition \ref{condition 2}.
		Then, 
		for  all $T < (        \sum \rho_k e^k  )^{ -1 }$ and $F_0 \in X_{     0}  $,
		there is a unique mild solution $ \bF \in \bX_{   \bmu } $
		to the infinite hierarchy  
		\eqref{infinite hierarchy equation}.
		In addition, it holds that 
		\begin{equation}
		\| \bF \|_{\bmu} 
		\leq 
			( 1 - \theta_2)^{ -1 }
		\|  F_0 \|_{  0} \, , 
		 \qquad
		 \textstyle{with}
		 \qquad
		 \theta_2 = T \sum_{k=0}^m \rho_k e^k \in (0,1)  \, . 
		 \end{equation}
	\end{proposition}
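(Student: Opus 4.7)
The plan is to solve the infinite hierarchy in mild form via a Picard/contraction argument in the Banach space $\bX_{\bmu}$, exploiting the decreasing-in-$t$ weight $\bmu(t) = -t/T$ to absorb the linear-in-$s$ loss in Condition \ref{condition 2}. This is the standard Cauchy--Kowalewski style trick: one pays a factor of $s$ in the collision estimate, but the derivative of $e^{\bmu(t)s}$ with respect to $t$ produces a compensating $s/T$.

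Concretely, I would define the map $\Phi: \bX_\bmu \to \bX_\bmu$ by
\begin{equation*}
(\Phi \bF)(t) \ := \ F_0 \ + \ \int_0^t \bC^\infty \bF(\tau)\, \d \tau \ ,
\end{equation*}
and prove both the self-mapping and contraction estimates. For $F \in X^{(s+k)}$, Condition \ref{condition 2} gives $\|\C^\infty_{s,s+k}f^{(s+k)}(\tau)\|_{X^{(s)}} \leq \rho_k s \, e^{\tau(s+k)/T}\|\bF\|_\bmu$, so after multiplying by the time-$t$ weight $e^{-ts/T}$ and summing, the key computation is
\begin{equation*}
e^{-ts/T}\,\|(\Phi\bF)(t))^{(s)}\|_{X^{(s)}}
\ \leq \ \|F_0\|_0 \ + \ \|\bF\|_\bmu \sum_{k=0}^{m}\rho_k\, e^{\tau k/T}\,s\!\int_0^t e^{(\tau - t)s/T}\,\d\tau \ .
\end{equation*}
Here the crucial identity is $s\int_0^t e^{(\tau-t)s/T}\,\d\tau = T(1 - e^{-ts/T}) \leq T$, which exactly cancels the factor of $s$ coming from the estimate on $\C^\infty_{s,s+k}$. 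Using $e^{\tau k/T} \leq e^k$ for $\tau \in [0,T]$, one obtains $\|\Phi\bF\|_{\bmu} \leq \|F_0\|_0 + \theta_2 \|\bF\|_\bmu$, and by linearity the same bound gives the Lipschitz estimate $\|\Phi\bF_1 - \Phi\bF_2\|_\bmu \leq \theta_2\|\bF_1 - \bF_2\|_\bmu$.

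Since $T < (\sum \rho_k e^k)^{-1}$ is assumed, one has $\theta_2 \in (0,1)$, so $\Phi$ is a strict contraction on $\bX_\bmu$ and the Banach fixed-point theorem delivers a unique mild solution $\bF \in \bX_\bmu$. Evaluating the inequality at this fixed point yields the a priori bound $\|\bF\|_\bmu \leq (1-\theta_2)^{-1}\|F_0\|_0$. The argument is essentially identical to that of Proposition \ref{corollary 1} with $R_k$ replaced by $\rho_k$, so one may simply indicate that it is an almost verbatim repetition.

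The only delicate point, which I would highlight, is the interplay of the three exponential factors ($e^{-ts/T}$ from the norm at time $t$, $e^{\tau s/T}$ from bounding $f^{(s+k)}(\tau)$, and the factor $s$ from Condition \ref{condition 2}); making the arithmetic clean requires splitting $e^{-ts/T + \tau(s+k)/T}$ as $e^{(\tau-t)s/T}e^{\tau k/T}$ before integrating. No further obstacle is anticipated, since all operators are bounded and no unbounded-operator semigroup theory is needed.
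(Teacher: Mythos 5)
Your proposal is correct and follows essentially the same route as the paper: both write the hierarchy in mild form, use Condition \ref{condition 2} together with the weight $\bmu(t)=-t/T$ so that the time integral $s\int_0^t e^{(\tau-t)s/T}\,\d\tau \le T$ absorbs the factor of $s$, arrive at $\|\Phi\bF - F_0\|_{\bmu}\le \theta_2\|\bF\|_{\bmu}$, and conclude by the Banach fixed-point theorem. The only cosmetic difference is that the paper runs the contraction on a ball $B_R(F_0)$ while you contract on all of $\bX_{\bmu}$, which is equally valid.
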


	\begin{remark}
		For the rest of the article, 
		we ask the time interval $[0,T]$
		to satisfy the following condition
		\begin{equation}\label{time}
		\textstyle 
		T 
		< 
		 m^{-1}
		 T_*
		 \quad
		  \textnormal{ with }
		  \quad 
		  T_*
		:=
		\min
		\big\{ 
		\big(   \sum_{k=0}^m R_k  e^k  \big)^{ -1 }, 
		\big(    \sum_{k=0}^m \rho_k  e^k  \big)^{ -1 }
		\big\} \  . 
		\end{equation}
	Here, $T_*$ stands for the 
	maximal time for which we can prove simultaneous well-posedness 
	of the two hierarchies; see 
	Proposition \ref{corollary 1} and \ref{corollary 2}.
	In particular,  $T_*$ is independent of the initial conditions. 
		Consequently, an iteration procedure for proving convergence for all $t \in \R $ is possible, provided global a priori bounds  are satisfied by the solutions of the finite and infinite hierarchies, respectively. 
		For the Kac model, these  bounds follow from the fact that the solution of the Master equation is the density of a probability measure. 
		The extra factor $1/m$ will be used to assure that certain integral remainder terms converge to zero. See Section \ref{section proof convergence} for details. 
	\end{remark}

	\subsection{Convergence of hierarchies}
	The notion of convergence that we are going to study is known in the literature as \textit{convergence of observables}. 
	Before we describe it, we introduce some notation.
	The bracket $  \langle  \cdot , \cdot  \rangle $ 
	stands for the  pairing between $X^{(s)}$ and its dual   
	$X^{(s)*} \equiv (X^{(s)})^* $. 
	
	\begin{definition}
	\label{definition 3}
		We introduce the two following notions of convergence. 
		\begin{enumerate}
			\item 
			The sequence 
			$(F_N)_{N =1}^\infty \in X$  
			\textit{converges pointwise weakly}
			to $F \in X$,
			abbreviated 
			$
			F_N 
			\xrightarrow{pw}
			F, 
			$
			if 
			\begin{equation}
			\lim_{N \rightarrow \infty}
			\langle 
			f_{N}^{ (s)}    \bc   \vp      
			\rangle 
			=  
			\langle 
			f^{(s)} 
			\bc  \vp    
			\rangle 
			\qquad 
			\forall s \in \N ,   \ \forall   \vp \in  	X^{(s)*} 
			\  
			\end{equation}
		where 
		$F_N = (  f_{N}^{ (s)} )_{s \in \N }$
		and 
		$F = (f^{ (s)})_{s \in \N }$ . 
			\item 
		The sequence    
			$\bF_N  
			 % = (\bf_N^{(s)} )_{s \in \N }  
			: [0,T] \rightarrow X 
			$ 
			\textit{converges in observables} 
			to 
			$\bF  
			%= ( \bf^{(s)})_{s \in \N }
			:
			[0,T] \rightarrow X  
			$ 
			if 
			for any $s \in \N $
			and any 
			$\vp \in X^{(s)^*}$
				\begin{equation}
		\lim_{N \rightarrow \infty }
		\<
	\bf_N^{(s)} (t)    ,
		\vp
		\>    
		=
		\<
		\bf^{(s)} (t)   ,
		\vp
		\> \ , 
		\end{equation}
		uniformly in $ t \in [0,T]$, 
			where 
		$\bF_N = (  \bf_{N}^{ (s)} )_{s \in \N }$
		and 
		$\bF = (\bf^{ (s)})_{s \in \N }$ . 
		
		\end{enumerate}
	\end{definition}
	
	\vspace{1mm}
	
	Let us now make precise the notion in which we understand convergence from the $N$-hierarchy operators $\C^N$ to the infinite hierarchy operators $\C^\infty$.

	\vspace{1mm}

	\begin{definition}\label{definition convergence}
		Let $X$ be the space introduced in \eqref{direct sum}. 
		We say that a sequence of operators
		$  T^N : X \rightarrow X$ \textit{converges} 
		to $  T : X \rightarrow X $ 
		if for any sequence $ F_{N} \in X $ 
		such that
		$F_N \xrightarrow{pw} F $,
		it holds true that
		$  T^N   F_{N}  \xrightarrow{pw } T F $. 
	\end{definition}
	
	\vspace{1mm}

	The following result is our first main theorem; it gives conditions under which convergence in observable occurs, from the finite to the infinite hierarchy. 
	
	\vspace{1mm}
	
	\begin{theorem}[Convergence of Hierarchies]
		\label{thm convergence}
		Assume that the $N$-hierarchy operators $\C^N$ 
		satisfy Condition \ref{condition 1},  
		and that the infinite hierarchy operators $\C^\infty$
		satisfy Condition \ref{condition 2}. 
		Let $\bF_N \in \bX_{\bmu }$ be a mild solution, corresponding to initial data $F_{N,0} \in X_0$, of the $N$-hierarchy \eqref{bbgky mild solution}, and  
		let $\bF \in   \bX_{   \bmu }$ be a mild solution, corresponding to initial data $F_0 \in X_0$, of the infinite hierarchy \eqref{boltzmann mild solution}. 
		In addition, assume that 
		\begin{enumerate}[label=\textnormal{(A\arabic*)}]
			\item $F_{N,0} \xrightarrow{pw} F_0$,
			\item $\sup_{ N \geq 1 }  \| F_{N,0} \|_{0} < \infty $
			\item $\bC^N$ converges to $\bC^\infty$  in the sense of Definition \ref{definition convergence}.
		\end{enumerate}
		Then, $\bF_N$ converges in observables to $\bF$.
	\end{theorem}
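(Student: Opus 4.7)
The plan is to iterate the Duhamel identities \eqref{bbgky mild solution} and \eqref{boltzmann mild solution} to produce finite Dyson-series expansions of both hierarchies, establish pointwise weak convergence of each term via (A1) and (A3), and control the remainder uniformly in $N$ by combining Conditions \ref{condition 1}--\ref{condition 2} with the a priori bound of Proposition \ref{corollary 1} and the short-time restriction \eqref{time}.

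More concretely, iterating the mild forms $n$ times yields
\begin{equation*}
\bF_N(t) \ = \ \sum_{j=0}^{n-1} \frac{t^j}{j!}\,(\bC^N)^j F_{N,0} \ + \ R^N_n(t), \qquad R^N_n(t) \ = \ \int_{0 \le \tau_n \le \cdots \le \tau_1 \le t} (\bC^N)^n \bF_N(\tau_n)\, \d \tau_n \cdots \d \tau_1,
\end{equation*}
together with the analogous identity for $\bF(t)$ involving $\bC^\infty$ and $F_0$. A straightforward induction on $j$ -- with (A1) as the base case and (A3) together with Definition \ref{definition convergence} for the inductive step -- shows $(\bC^N)^j F_{N,0} \xrightarrow{pw} (\bC^\infty)^j F_0$ for every $j \ge 0$. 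Hence for any $s \in \N$ and $\vp \in X^{(s)*}$ the truncated partial sums
\begin{equation*}
\sum_{j=0}^{n-1} \frac{t^j}{j!} \, \big\langle \big((\bC^N)^j F_{N,0}\big)^{(s)}, \vp \big\rangle
\end{equation*}
converge to the corresponding sum with $(\bC^\infty, F_0)$, uniformly in $t \in [0,T]$.

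To control the remainder, I use (A2) and Proposition \ref{corollary 1} to get $\sup_N \|\bF_N\|_{\bmu} < \infty$, hence $\|f_N^{(s')}(\tau)\|_{X^{(s')}} \le C\, e^{\tau s'/T}$ uniformly in $N$. Iterating Condition \ref{condition 1} and bounding $e^{\tau k/T} \le e^k$ at each step yields
\begin{equation*}
\|((\bC^N)^n \bF_N(\tau))^{(s)}\|_{X^{(s)}} \ \le \ C\,e^{\tau s/T}\,A^n \prod_{i=0}^{n-1}(s+im), \qquad A := \sum_{k=0}^m R_k e^k .
\end{equation*}
The time-ordered integration produces the factor $t^n/n!$, and since $\prod_{i=0}^{n-1}(s+im)/n! = m^n\, \Gamma(s/m + n)/(\Gamma(s/m)\, n!)$ grows only polynomially in $n$ by Stirling, the restriction $T < (mA)^{-1}$ built into \eqref{time} gives geometric decay in $n$ and, in particular, vanishing of the remainder $(R^N_n(t))^{(s)}$ in $X^{(s)}$ as $n \to \infty$, uniformly in $N$ and $t \in [0,T]$. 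The analogous estimate for $R^\infty_n(t)$ follows from Condition \ref{condition 2}. A standard $\varepsilon/3$ argument then combines the pointwise convergence of the truncated sums with the uniform smallness of both remainders to finish the proof.

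The main obstacle lies in the remainder bound above: the linear-in-$s$ growth of $\bC^N$ imposed by Condition \ref{condition 1}, iterated $n$ times, produces the product $\prod_{i=0}^{n-1}(s+im)$, which asymptotically absorbs an entire $n!$ factor. It is precisely this ``loss of a factorial'' that dictates the extra $m^{-1}$ in the time condition \eqref{time} and makes the theorem a genuinely short-time statement; propagation to all $t \ge 0$ (as needed in applications such as the Kac model) will require additional a priori bounds on the hierarchies coming from conservation laws of the underlying stochastic process.
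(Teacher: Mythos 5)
Your proposal is correct and follows essentially the same route as the paper: iterate the Duhamel formulas into a finite Dyson expansion, prove $(\bC^N)^j F_{N,0}\xrightarrow{pw}(\bC^\infty)^j F_0$ by induction using (A1) and (A3), and kill the remainder by combining the iterated Condition~\ref{condition 1}/\ref{condition 2} bounds (the product $\prod_{i=0}^{n-1}(s+im)\le s\,m^n n!\,(en)^{s/m}$, which you phrase via $\Gamma(s/m+n)/n!$ and Stirling) with the $t^n/n!$ simplex volume, the uniform bound $\sup_N\|\bF_N\|_{\bmu}<\infty$ from (A2) and Proposition~\ref{corollary 1}, and the restriction $T<m^{-1}T_*$. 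Your closing observation about the factorial loss forcing the extra $m^{-1}$ in \eqref{time} and the need for global a priori bounds to iterate in time matches the paper's own remarks.
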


     % In the next subsection we apply this result to the generalized Kac model introduced in the Introduction. 

 %In the case which   the  Banach spaces are given by 
%$X^{(s)} = L^1(  \R^{ds} ),$ the BBGKY operators %have the form
%\begin{equation}
%\C_{s,s+k}^N =  \C_{s,s+k}^\infty + \calR_{s,s+k}^N  \ , \qquad  s \leq N \,  , \ k = 0 , \ldots,  m
%\end{equation}
%where $\calR $ is a \textit{remainder term}, whose operator norm vanishes as $N \rightarrow \infty$
%and where the    Boltzmann operators 
%have the form 
%\begin{equation}
%( \C_{s , s+ k }^\infty f^{(s+k)} )  (  V_s )
%= 
%K \beta_K 
%\sum_{ i =1 }^s 
%\int_{ \S_K \times \R^{d k }   }
%\Big(
%f^{(s+k) }
%(V_{s+k}^* )
%-
%f^{(s+k)} (V_{s+k})
%\Big)  \d b_K(\omega )
%\d v_{s +1}
%\ldots 
%\d v_{s+k}   \,  , 
%\quad 
%V_s \in\R^{ds}		\nonumber 
%\end{equation}
%where 
%$V_{s+k} = (V_s ; v_{s+1} , \ldots, v_{s+k})$ 
%and  
%$V_{s+k}^* =  T_{i \, s+1 \cdots s+k }^\omega  %V_{s+k}$ are the collection of  pre- and post-collisional velocities, respectively.

	\subsection{BBGKY and Boltzmann hierarchies}
	The next result of this paper 
	concerns the  application of  Theorem \ref{thm convergence} to our generalization of  the Kac model. 
	In order to state it,  let us first introduce the marginals of the solution of the Master equation \eqref{master eq}. 
	Indeed,  we consider      the following trace map
	\begin{equation}
	\label{def partial trace}
	\textnormal{Tr}_{s+1, \ldots, N}
	:
	L^1_{ \textnormal{sym} }	
	(\R^{dN } ) 
	\rightarrow 
	L^1_{ \textnormal{sym} }
	(  \R^{ds}) \ , \qquad s   \in \N 
	\end{equation}
where we recall that $L^1_{\textnormal{sym}}$
stands for $L^1$ 
functions invariant under permutation of their variables.
The trace map is then 
	defined for $f \in L_{\textnormal{sym}}^1 (\R^d)$
as
\begin{equation}	\label{marginals}
	\textnormal{Tr}_{s+1, \ldots, N}
	[f] (V_s)
	=
	\begin{cases}
	\int_{ \R^{d ( N -s )} }  f ( V_s , v_{s+1}, \ldots,  v_N ) \, \d v_{s + 1 }\cdots \d v_{N} \,  &  s < N  \\
	f ( V )& s =  N  \\ 
	0 & s > N 
	\end{cases} \, ,   
	\qquad 
	\   V_s \in  \R^{d s} \  .
	\end{equation}
	In particular, note that the trace map preserves permutational symmetry.

\vspace{1mm}

Let   $f_N$ be the solution of the Master equation  \eqref{master eq}. 
We now introduce its  marginals  as the   sequence  of  functions 
	 \begin{equation}
	        \label{marginals}
	        f_N^{(s) } := 
	\textnormal{Tr}_{s+1, \ldots, N} [f_N] \ , 
	\qquad s \in \N \ . 
	 \end{equation}
One may   show that the dynamics of the sequence of
	$s$-th marginals fits the abstract 
	functional framework introduced above. 
	Namely, by  letting 
	$X^{(s)} 
	=
	L^1_{\textnormal{sym}} (\R^{ds}) $
	we   will   show   in Section \ref{section proof derivation} that 
	$	f_N^{(s)}$ satisfies 
	\begin{equation}
	\label{bbgky hierarchy}
	\partial_t 
	f_N^{(s)}
	= 
	\c_{s,s}^N f_N^{(s)} 
	+
	\cdots
	+
	\c_{s,s+  m}^N f_N^{(s + m )} \ , \qquad \forall s \in \N 
	\end{equation}
	where       
	$\c_{s,s+k}^N : 
	L^1_{\textnormal{sym}}
	(\R^{d(s+k)})
	\rightarrow 
	L^1_{\textnormal{sym}} (\R^{ds})$
	are operators  that
	can be computed explicitly.  We shall refer to 
  \eqref{bbgky hierarchy} as the 
	\textit{BBGKY hierarchy}.

	\vspace{1mm}
	
	In order to display the structure of the operators $\{ \c_{s,s+k}^N \}_{s=1,k=0}^{\infty , m}$ let us first introduce some notation that will be used for the rest of the article. 
	
	\vspace{2mm}
	\noindent 
	\textbf{Notation.}
	Let $s \in \N$ and $k \in \{0 ,\ldots, m \}$.
	Given 
	$V_s  = (v_1 , \ldots, v_s) \in \R^{ds},$ 
		$v_{s+1} , \ldots, v_{s+k} \in \R^{d}, $
	 an index $ i \in \{1 , \ldots, s  \}, $	and a scattering angle $\omega \in \S_K $,
	 we  record  the   \textit{pre-} and \textit{post-collisional velocities} 
	 by the  following vectors in $\R^{d(s+k)}$ 
	\begin{align}
	\label{pre collisional vector}
	    V_{s+k} &  \  :=  \  ( \, V_s  \, ; \,  v_{s+1} \,  ,  \, \cdots \, ,  \, v_{s+k} \, )   \ ,    \\ 
	    \label{post collisional vector}
	    V_{s+k}^{* i } &  \  :=   \   ( \, v_1  \, , \,  \cdots \, , \,  v_i^* \, , \,  \cdots \, , \,  v_s  \,  ; \,   v_{s+1}^* \, ,\,  \cdots \, ,\,  v_{s+k}^* \, ) \   , 
	\end{align}
	where $(v_i^* , v_{s+1}^* , \cdots , v_{s+k}^*) \equiv  T_K^\omega ( v_{i} ,   v_{s+1} , \cdots, v_{s+1}^*) \in \R^{dK } $. 
	\vspace{2mm}
	 
	The operators that drive the BBGKY hierarchy then take the form (recall that  $K = k+1$)
	\begin{align}
	\label{bbgky operator}
	(
	\c_{s , s+ k }^N            \nonumber 
	f^{(s+k) }   
	)    
	(  V_s )  
	  = & 
	\frac{\beta_K \, N}{\binom{N}{K}}
	\binom{N \! -\! s}{K \! -\! 1}		 
	\sum_{ i =1 }^s 
	\int_{ \S_K \times \R^{d k }   }
	\Big(
	f^{(s+k) }
	(V_{s+k}^{* i } )
	-
	f^{(s+k)} (V_{s+k})             \nonumber 
	\Big)   
%	\\
%	& \times 
	  \d b_K(\omega )
	\d v_{s +1}
	\ldots 
	\d v_{s+k}        
		\\
 & 	\,  + \,   
	\calR_{s,s+k}^N \ . 
	\end{align}
 The operator $\calR_{s,s+k}^N$
	is a reminder term
	defined in \eqref{reminder}
	and whose explicit form we do not display here.
	Importantly, we will also show in Section \ref{section proof derivation} 
	that the operators $\c_{s,s+k}^N$
	satisfy Condition \ref{condition 1}.

	\vspace{2mm}

	In Section \ref{section proof derivation},
	we will show that 
	the operators $\c_{s,s+k}^N$
	given by   \eqref{bbgky operator}
	converge as $ N \rightarrow \infty $ 
	to the   operators
 $\c_{s,s+k}^\infty :
 L^1_{\textnormal{sym}} 
 (\R^{d(s+k)}) 
 \rightarrow L^1_{\textnormal{sym}} (\R^{ds})$
 given by 
	\begin{align}
	\label{boltzmann operators}
( \c_{s , s+ k }^\infty f^{(s+k)} )  (  V_s )
& = 
  \beta_K 
  K 
\sum_{ i =1 }^s 
\int_{ \S_K \times \R^{d k }   }
\Big(
f^{(s+k) }
(V_{s+k}^{*i } )
-
f^{(s+k)} (V_{s+k})
\Big)  \d b_K(\omega )
\d v_{s +1}
\ldots 
\d v_{s+k}      \ , 
\end{align}
where 
$V_{s+k} $ 
and  
$V_{s+k}^{*i}$ are as in \eqref{pre collisional vector} and \eqref{post collisional vector}, respectively. 
We verify that these operators satisfy Condition \ref{condition 2} (see Lemma \ref{lemma estimates}) 
and, therefore, fit the abstract functional framework.

\vspace{2mm}

We are now ready to introduce the 
\textit{Boltzmann hierarchy} as the infinite hierarchy 
\eqref{infinite hierarchy equation}
with the  operators $\C_{s,s+k}^\infty$ 
given by \eqref{boltzmann operators}:  
	\begin{equation}
	\label{boltzmann hierarchy}
	\partial_t 
	f^{(s)}
	= 
	\c_{s,s}^\infty f^{(s)} 
	+
	\cdots
	+
	\c_{s,s+  m}^\infty f^{(s + m )} \ .
	\end{equation}

\vspace{1mm}

Our main result concerns the limit from the BBGKY to the Boltzmann hierarchy.

\begin{theorem}
[From BBGKY to Boltzmann]
\label{thm bbgky to boltzmann}
    Let $X^{(s)} 
    =
    L^1_{\textnormal{sym}} (\R^{ds})$. 
    Let 
    $\bF_N  $
    and 
    $\bF $
    be mild solutions to the BBGKY hierarchy \eqref{bbgky hierarchy} 
    and
    Boltzmann hierarchy \eqref{boltzmann hierarchy}, with initial data 
    $F_{N,0}  \in X_0 $
     and
     $F_0 \in X_0$, respectively. 
     Additionally, assume that 
     $ F_{N,0} \xrightarrow{pw} F_0$, and that 
     $ \sup_{ N \geq 1}  \| F_{N,0} \|_{ 0} < \infty$. 
     Then, $\bF_N$ converges in observables to $\bF$. 
\end{theorem}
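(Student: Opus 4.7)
The plan is to derive Theorem \ref{thm bbgky to boltzmann} as a direct application of the abstract convergence result Theorem \ref{thm convergence}. Assumptions (A1) and (A2) of Theorem \ref{thm convergence} are already among the hypotheses, so the proof reduces to verifying (i) Condition \ref{condition 1} for the BBGKY operators $\c_{s,s+k}^N$ of \eqref{bbgky operator}, (ii) Condition \ref{condition 2} for the Boltzmann operators $\c_{s,s+k}^\infty$ of \eqref{boltzmann operators}, and (iii) operator convergence $\bC^N\to\bC^\infty$ in the sense of Definition \ref{definition convergence}.

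For (i), I would estimate $\|\c_{s,s+k}^N f^{(s+k)}\|_{L^1(\R^{ds})}$ directly from \eqref{bbgky operator}. Splitting into gain and loss pieces and using the triangle inequality, the loss term contributes $\|f^{(s+k)}\|_{L^1}$ after integrating over $v_{s+1},\ldots,v_{s+k}$ and $\omega$, since $b_K$ is a probability measure. The gain term reduces to the same quantity after the linear change of variables induced by $T_K^\omega$, whose Jacobian equals one by \h. Summing over $i=1,\ldots,s$ gives the factor of $s$, and the combinatorial prefactor
$$
\frac{\beta_K N}{\binom{N}{K}}\binom{N-s}{K-1} \;=\; \beta_K K\,\prod_{j=1}^{K-1}\frac{N-s-j+1}{N-j}
$$
is uniformly bounded by $\beta_K K$, giving a bound linear in $s$ with constant independent of $N$. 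The remainder $\calR_{s,s+k}^N$ must be treated separately; I expect an analogous bound with a constant of order $O(s/N)$, which is absorbed into the final $R_k$. Assertion (ii) follows from the same calculation applied to \eqref{boltzmann operators}, which is simpler since no remainder is present and the prefactor is exactly $\beta_K K$.

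For (iii), fix $s\in\N$ and $\vp\in L^\infty(\R^{ds})$. For each $k$ I split
$$
\big\langle \c_{s,s+k}^N f_N^{(s+k)} - \c_{s,s+k}^\infty f^{(s+k)},\vp\big\rangle
= \big\langle (\c_{s,s+k}^N - \c_{s,s+k}^\infty) f^{(s+k)},\vp\big\rangle
+ \big\langle \c_{s,s+k}^N (f_N^{(s+k)} - f^{(s+k)}),\vp\big\rangle.
$$
The first bracket vanishes because, for fixed $s$, the difference of the main parts of the two operators has $L^1\!\to\! L^1$ norm $O(s/N)$ (by the prefactor asymptotics above), plus an $O(1/N)$ contribution from $\calR_{s,s+k}^N$. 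For the second bracket I use duality together with the further decomposition
$$
(\c_{s,s+k}^N)^*\vp \;=\; (\c_{s,s+k}^\infty)^*\vp + \big((\c_{s,s+k}^N)^* - (\c_{s,s+k}^\infty)^*\big)\vp.
$$
The explicit form of $(\c_{s,s+k}^\infty)^*\vp$, obtained via a change of variables justified by \h and \hh, is a fixed $L^\infty$ function, so pairing $f_N^{(s+k)}-f^{(s+k)}$ against it yields a vanishing limit by (A1). The remaining piece is bounded by $\|f_N^{(s+k)}-f^{(s+k)}\|_{L^1}\cdot\|\c_{s,s+k}^N-\c_{s,s+k}^\infty\|_{L^1\to L^1}$, whose first factor is uniformly bounded in $N$ by (A2) and Proposition \ref{corollary 1}, and whose second factor vanishes as $N\to\infty$ for each fixed $s$.

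The main technical obstacle I foresee is the careful analysis of the remainder $\calR_{s,s+k}^N$ in \eqref{bbgky operator}: both step (i) and the operator convergence step require controlling it in the $L^1\!\to\!L^1$ operator norm, with a bound that additionally vanishes as $N\to\infty$ for fixed $s$. The underlying combinatorial manipulations are elementary but bookkeeping-heavy, and one must keep track of the dependence on $s$ throughout. Once Conditions \ref{condition 1}, \ref{condition 2} and (A3) are established, Theorem \ref{thm convergence} applies verbatim and delivers convergence of $\bF_N$ to $\bF$ in observables.
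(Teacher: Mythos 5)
Your proposal is correct and follows essentially the same route as the paper: verify Condition \ref{condition 1} and Condition \ref{condition 2} by direct $L^1$ estimates on the gain/loss terms and the combinatorial prefactors (the paper's Lemma \ref{lemma estimates}), establish operator convergence via duality, prefactor asymptotics, and an $O(s^2/N)$ operator-norm bound on the remainder $\calR_{s,s+k}^N$ (the paper's Lemma \ref{lemma convergence}), and then invoke Theorem \ref{thm convergence}. The only cosmetic slip is that when verifying Definition \ref{definition convergence} the uniform $L^1$ bound on an arbitrary pointwise weakly convergent sequence should come from the uniform boundedness principle (as the paper does) rather than from (A2) and Proposition \ref{corollary 1}, since the sequence appearing there need not be a solution of the hierarchy.
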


 We prove Theorem \ref{thm bbgky to boltzmann} as
   a  corollary of Theorem \ref{thm convergence}; its proof
 can be found in  Section \ref{section proof derivation}.

	\subsection{The Boltzmann Equation}
We start this subsection by noting that 
the ansatz $(f^{\otimes s})_{s \in \N}$
is a solution of the Boltzmann hierarchy
\eqref{boltzmann hierarchy}
if  
$f \in C ( [0,T] ; L^1  (\R^d ) )$
solves the following non-linear equation 
	\begin{equation}\label{Boltzmann equation}
	\begin{cases} 
	\partial_t f =   \beta_1 Q_1 (f   ) + \cdots +    \beta_M Q_M (  f, \ldots, f   ) \\ 
	f( 0 , \cdot  )  = f_0 \in L^1(\R^d ) \,  , 
	\end{cases}
	\end{equation}
	where the collision operators 
	$Q_K: L^1(\R^d )^K  \rightarrow L^1 (\R^d ) $
	were defined in the Introduction; see \eqref{collision operators}. 
	Global well-posedness for the equation \eqref{Boltzmann equation} was studied in \cite{gamba} in a slightly different setting.
	In Section \ref{appendix wp},  we adapt their   proof  to our situation 
	and obtain the following result. 
	
	\begin{proposition}[Global Well-posedness]\label{prop global wp}
		For all $  f_0 \in L^1(\R^d)$ with $ \int_{  \R^{d} }
		f_0(v) \d v   =1 $ and $ \| f_0  \|_{L^1 }  \leq  1, $  there is a unique solution 
		$f  \in C^1 ( \R ,L^1 (\R^d ) )$ to the Boltzmann equation \eqref{Boltzmann equation}.
		In addition, $\int_{\R^d}  f(t,v ) \d v = 1 $ and
		$\|  f(t) \|_{L^1} \leq 1 $ for all  $t \in \R$. 
	\end{proposition}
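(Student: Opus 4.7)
The plan is to adapt the Bobylev--Cercignani--Gamba strategy from \cite{gamba} by casting \eqref{Boltzmann equation} in gain--loss form and applying a fixed-point argument using only the structural estimates provided by $\h-\hhh$. The key observation is that for any probability density $f$, the loss part of each $Q_K(f,\ldots,f)$ reduces to $-K \|f\|_{L^1}^{K-1} f = -K f$, so the equation takes the form
\begin{equation}
\partial_t f + \lambda f = G(f),
\qquad
\lambda := \sum_{K=1}^M K \beta_K ,
\end{equation}
where the gain operator
\begin{equation}
G(f)(v_1) := \sum_{K=1}^M K \beta_K \int_{\S_K \times \R^{d(K-1)}} f^{\otimes K}(T_K^\omega V) \, \d b_K(\omega) \, \d v_2 \cdots \d v_K
\end{equation}
is non-negativity preserving. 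Using $\h$ (so that $T_K^\omega$ has Jacobian $1$) the change of variables $V \mapsto T_K^\omega V$ yields $\|G(f)\|_{L^1} \leq \sum K \beta_K \|f\|_{L^1}^K$, and the telescoping identity $f^{\otimes K} - g^{\otimes K} = \sum_{i=1}^K f^{\otimes(i-1)} \otimes (f-g) \otimes g^{\otimes(K-i)}$ gives the Lipschitz bound
\begin{equation}
\|G(f) - G(g)\|_{L^1(\R^d)} \leq \Big(\sum_K K^2 \beta_K\Big) \|f - g\|_{L^1}
\end{equation}
uniformly on $\{ f : \|f\|_{L^1} \leq 1\}$. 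Integrating $Q_K(f,\ldots,f)$ against the constant function $1$ and using $\h$ a second time shows $\int Q_K = 0$, which is the mass-conservation law $\partial_t \int f \, \d v = 0$.

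Next, I would set up the Duhamel iteration
\begin{equation}
f_{n+1}(t) = e^{-\lambda t} f_0 + \int_0^t e^{-\lambda(t-s)} G(f_n(s)) \, \d s
\end{equation}
on the closed set $\mathcal{X}_T := \{ f \in C([0,T]; L^1(\R^d)) : f \geq 0, \ \|f\|_{L^1} \leq 1\}$. The bound $\|G(f)\|_{L^1} \leq \lambda$ on $\mathcal{X}_T$ combined with the exponential factor gives $\|f_{n+1}(t)\|_{L^1} \leq e^{-\lambda t} + (1 - e^{-\lambda t}) = 1$, and non-negativity is manifestly preserved, so $\mathcal{X}_T$ is invariant. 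The Lipschitz bound then yields contraction on a time interval $[0,T_0]$ whose length depends only on $\lambda$ and $\sum K^2 \beta_K$, not on $f_0$; Banach's fixed-point theorem produces a unique mild solution, and boundedness of the right-hand side in $L^1$ upgrades regularity to $f \in C^1([0,T_0]; L^1)$. Mass conservation together with $f \geq 0$ forces $\|f(t)\|_{L^1} = \int f(t) \, \d v = 1$ on the entire interval, so $f(T_0) \in \mathcal{X}$ and the local interval may be iterated to produce a unique global solution on $[0,\infty)$.

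The main subtlety I anticipate is extending to $t < 0$, where running the evolution backward does not obviously preserve non-negativity and the identification $\|f\|_{L^1} = \int f$ is no longer automatic. My plan is to observe that $F(f) := \sum_K \beta_K Q_K(f,\ldots,f)$ is a polynomial of degree $M$ in $f$ and therefore locally Lipschitz on every ball of $L^1(\R^d)$, so that Picard--Lindel\"of yields local existence on a two-sided interval $(-T_0, T_0)$. Uniqueness follows from Gronwall applied to the $L^1$-norm of the difference of two solutions, using the local Lipschitz estimate on bounded sets. The global-in-time $L^1$ bound for negative times would then be obtained from a differential inequality for $t \mapsto \|f(t)\|_{L^1}$ combined with mass conservation $\int f = 1$; the cleanest route is to mimic the argument in \cite{gamba}, verifying that $\h-\hhh$ supply the estimates needed in their framework, and concluding global well-posedness on all of $\R$ with the claimed bounds.
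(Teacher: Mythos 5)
Your proposal follows essentially the same route as the paper: local well-posedness by a fixed-point argument based on the polynomial Lipschitz estimates for $Q_K$, conservation of mass via the measure-preserving change of variables $V \mapsto T_K^\omega V$ from $\h$, the gain--loss rewriting $Q_K(f,\ldots,f) = Q_K^{(+)}(f,\ldots,f) - Kf$ combined with Duhamel's formula and Picard iterates to propagate the bound $\| f(t)\|_{L^1} \leq 1$, and patching of the local intervals (whose length is independent of the data) to reach all positive times. The only point where you go beyond the paper is in flagging the backward-in-time direction, which the paper's written proof leaves implicit; your treatment of $t<0$ remains a sketch, but for $t \geq 0$ the argument is complete and matches the paper's.
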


Now we are ready to state our result concerning propagation of chaos for the Master equation \eqref{master eq}. 
Namely, we prove the following result  
	
	\begin{theorem}[Propagation of Chaos]
		\label{thm prop of chaos}
		Let $f_{N,0} \in L^1_{\mathrm{sym}}(  \R^{dN })$	be non-negative  and  normalized to unity
		\begin{equation*}
		     \|     f_{N,0} \|_{L^1} = \int_{\R^{dN }}  f_{N,0} (V) dV = 1 \ . 
 		\end{equation*}
		Further, assume  that its sequence of marginals 
		$ (   f_{N,0}^{ (s) }  )_{s  \in \N }$ 
		converges pointwise weakly to the tensor product
		$ (  f_0^{\otimes s }   )_{s \in \N } $,  for some $f_0 \in L^1(\R^d )$. 
		Let 
		$f_N(t)$
		 be the solution of the Master equation \eqref{master eq}, with initial data $f_{N,0}$. 
		Then,   for all    $t\geq0 $,     $s \in \N $ and $\vp_s\in  L^\infty (\R^{ds})$ it holds that 
		\begin{equation}\label{thm 2 eq 1}
		\lim_{N \rightarrow \infty }
		\langle   f_N^{(s)} (t, \cdot ) , \vp_s       \rangle 
		=
		\<  f(t , \cdot )^{\otimes s} , \vp_s    \> 
		\end{equation}
		where $f(t,v )$ is the solution of the Boltzmann equation \eqref{Boltzmann equation}, with initial data $f_0 $. 
	\end{theorem}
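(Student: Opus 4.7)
The plan is to deduce this theorem from Theorem \ref{thm bbgky to boltzmann} combined with uniqueness for the Boltzmann hierarchy (Proposition \ref{corollary 2}) and the global well-posedness for the Boltzmann equation (Proposition \ref{prop global wp}). I set $X^{(s)}=L^1_{\mathrm{sym}}(\R^{ds})$ and $F_{N,0}:=(f_{N,0}^{(s)})_{s\in\N}$. Since $f_{N,0}\geq 0$ and $\int f_{N,0}=1$, every marginal has $L^1$-norm equal to one, so $\|F_{N,0}\|_0=1$ uniformly in $N$; the pointwise weak convergence $F_{N,0}\xrightarrow{pw}(f_0^{\otimes s})_{s\in\N}$ is assumed, and testing the first marginal against $\vp\equiv 1\in L^\infty$ shows $\int f_0=1$, while non-negativity passes to the pointwise weak limit, so Proposition \ref{prop global wp} produces a unique global $f\in C^1(\R;L^1(\R^d))$ with $\|f(t)\|_{L^1}\leq 1$ for all $t$.

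Next, I would identify both members of the limit as mild solutions in the sense of Section \ref{subsec-functional}. On the BBGKY side, $\bF_N(t):=(f_N^{(s)}(t))_{s\in\N}$ is produced by applying the trace maps $\mathrm{Tr}_{s+1,\ldots,N}$ to the Master equation, yielding the BBGKY hierarchy \eqref{bbgky hierarchy}. On the Boltzmann side, I need to verify that $\bF(t):=(f(t)^{\otimes s})_{s\in\N}$ solves the Boltzmann hierarchy. Differentiating the tensor product in time and substituting \eqref{Boltzmann equation} into each factor gives
\begin{equation*}
\partial_t(f^{\otimes s})(V_s)\;=\;\sum_{i=1}^{s}\sum_{K=1}^{M}\beta_K\,Q_K(f,\ldots,f)(v_i)\,\prod_{j\neq i}f(v_j),
\end{equation*}
and inserting the definition \eqref{collision operators} of $Q_K$ identifies this with $\sum_{k=0}^{m}\c_{s,s+k}^\infty(f^{\otimes(s+k)})(V_s)$: the $K$-block $(v_i,v_{s+1},\ldots,v_{s+k})$ acted on by $T_K^\omega$ combined with the spectator factors $\prod_{j\neq i}f(v_j)$ reproduces exactly $f^{\otimes(s+k)}(V_{s+k}^{*i})-f^{\otimes(s+k)}(V_{s+k})$ in the notation of \eqref{post collisional vector}. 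The bound $\|f(t)^{\otimes s}\|_{L^1}\leq 1$ places $\bF$ in $\bX_\bmu$, and uniqueness in Proposition \ref{corollary 2} identifies it as \emph{the} mild solution of the Boltzmann hierarchy with initial data $(f_0^{\otimes s})_{s\in\N}$.

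With the hypotheses of Theorem \ref{thm bbgky to boltzmann} established in the first paragraph, convergence in observables on any time interval $[0,T]$ satisfying \eqref{time} is immediate and yields $\langle f_N^{(s)}(t),\vp_s\rangle\to\langle f(t)^{\otimes s},\vp_s\rangle$. Since the constants $R_k,\rho_k$ entering \eqref{time} are structural (independent of initial data) and the a priori bounds $\|f_N(t)\|_{L^1}=1$ and $\|f(t)\|_{L^1}\leq 1$ hold globally in time, the argument can be iterated from $t=T,2T,\ldots$ to propagate convergence in observables to every finite $t\geq 0$, giving \eqref{thm 2 eq 1}. The main technical hurdle lies in the tensor-product verification of the second paragraph: correctly matching the combinatorial weights $\beta_K K$ in \eqref{boltzmann operators} against the factor-by-factor differentiation, and invoking Hypothesis \hhh\ to guarantee that the asymmetry between the distinguished index $i$ and the dummy indices $s+1,\ldots,s+k$ is absorbed inside the $\d b_K(\omega)$-average.
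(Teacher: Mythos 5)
Your proposal is correct and follows essentially the same route as the paper: apply Theorem \ref{thm bbgky to boltzmann} to get convergence in observables on $[0,T]$, verify that $(f^{\otimes s})_{s\in\N}$ is a mild solution of the Boltzmann hierarchy and invoke uniqueness from Proposition \ref{corollary 2} to identify the limit, then iterate in time using the global a priori $L^1$ bounds and the fact that $T$ is independent of the initial data. The extra details you supply (checking $\int f_0=1$ by testing against $\vp\equiv 1$, and the explicit tensor-product computation) are consistent with what the paper leaves as "a straightforward calculation."
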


 \begin{remark}
     Since the solution of the Master equation $f_N (t)$ is the probability density function of a probability measure (see Section \ref{master equation section}) it holds that 
   	\begin{equation}
		       \|     f_{N } (t)  \|_{L^1} 
		= \int_{\R^{dN }}  
		f_{N} (t , V) dV 
		= 1 
  \qquad \forall t \in \R   
 		\end{equation}
 and similarly for its sequence of marginals $f_N^{(s)} (t)$ . 
 \end{remark}

	\section{Applications}
	\label{section applications}
	In this section, we describe a set of examples that fit the framework introduced  in 
	Section \ref{section intro}
	and further developed in Section \ref{section main results}. 
	Namely, they satisfy $\textbf{(H1)}-\textbf{(H3)}$,
	and Theorem \ref{thm bbgky to boltzmann}
	and \ref{thm prop of chaos}
	can be applied to each of those models. 
	Some of the examples we consider have already been studied in the literature, and we recover existing results (see  Example 1 and 2 below). 
	Example 3, on the other hand, is new.

	\vspace{1mm}

	The following formula is helpful when trying to verify  the symmetric condition \hhh. 
	Let us  	regard a  linear map $T : \R^{dK} \rightarrow \R^{dK }$  as a collection of blocks  $ T = [ T_{ij}  ]_{i,j = 1}^K$, 
	where each $T_{ij } : \R^d  \rightarrow  \R^d $ is linear. Then, it holds that 
	\begin{equation}\label{permutation formula}
	\big(
\sigma \circ T \circ  \sigma^{-1}
	\big)_{i,j} 
	= T_{ \sigma(i ) , \sigma(j)}    \ ,
	\qquad i,j = 1 , \ldots, K  \  , \ \sigma \in S_K  \ . 
	\end{equation}

	\subsection{Examples}
	
\textit{(1) Binary Collisions.}
		Let $K =2 $ and take $\S_K = \S_1^{d-1}$, the $(d-1)$-dimensional unit sphere. 
		The transformation law $T_B$ is then defined according to the formulae
		\begin{align}
		& v_1^* = v_1  + \langle \omega, v_2 - v_1 \rangle \, \omega  \\ 
		& v_2^* = v_2 - \langle \omega, v_2 - v_1 \rangle \,  \omega
		\end{align}
		for $\omega \in \S_1^{d-1}$.
		It is straightforward to verify that $T_B$ is an involution that conserves both energy and momentum. 
		Hence, $\textbf{(H1)}$ and $\textbf{(H2)}$
		are verified. 
		Furthermore, we may write in block  form 
		\begin{equation}
		T^\omega_B
		=
		\begin{pmatrix}
		\1_d -  \< \cdot, \omega   \>\omega & 	 \< \cdot, \omega   \>\omega  \\
		\< \cdot, \omega   \>\omega  & \1_d -  \< \cdot, \omega   \>\omega 
		\end{pmatrix} \ , 
		\quad \omega \in \S_1^{d-1 }  
		\end{equation}
		where $\1_d$ is the $d$-dimensional identity. 
		In particular, it follows that
		$(	T^\omega_B)_{11} 
		 = 
		 (	T^\omega_B)_{22}
		$
		and
		$(	T^\omega_B)_{12} 
		 = 
		 (	T^\omega_B)_{21}.
		$
	This observation, combined with Eq. \eqref{permutation formula},   implies that 
		$\sigma \circ  T^\omega_B  \circ \sigma^{ -1 } = T_B^\omega $ for any $\sigma \in S_2$,
		which in turn implies $\textbf{(H3)}$.

 		\vspace{1mm}
 		
 		\noindent \textit{(2) Kac's Toy Model.}
		In dimension $d =1 $, M.  Kac \cite{Kac 1956} originally considers    $\S_2 = ( - \pi , \pi)$ 
		and     the    transformation law  
		$(v_1 , v_2) \mapsto T_{toy}^\theta (v_1 , v_2)$ 
		determined by the matrix
		\begin{align}
		T^\theta_{toy}
		=
		\begin{pmatrix}
		\cos \theta & \sin \theta \\
		- \sin \theta & \cos \theta 
		\end{pmatrix}  \  , 
		\qquad \text{ for } \theta \in
		( - \pi ,  \pi ) 
		\  . 
		\end{align}
Since this is an isometry, it satisfies
$\textbf{(H1)}$. 
We now proceed to verify $\textbf{(H2)}$
and
$\textbf{(H2)}$.
To this end, 
we  calculate  that for   $\sigma  = ( 1 \ 2) \in S_2$   and $ \theta \in (- \pi,\pi )$ it holds 
		\begin{align}
		\sigma 
		\circ  
		T^\theta_{toy} 
		\circ
		\sigma^{-1 } 
		\ = \ 
		[ T^\theta_{toy}  ]^{-1 }
		\ = \ 
		\begin{pmatrix}
		\cos \theta &  - \sin \theta \\
		+ \sin \theta & \cos \theta 
		\end{pmatrix}  
		\ = \ 
		T^{-\theta}_{toy}    \ . 
		\end{align}
		Consequently, we find that 
		$
		\sigma 
	\circ  
	T^\theta_{toy} 
	\circ
	\sigma^{-1 } 
		= (T^\theta_{toy})^{-1}
		\neq  T_{toy}^\theta $  for general $\theta$.
		However, a  change of variables $\theta \mapsto - \theta$ shows that 
		$\hh$ and $\hhh$ are verified, 
		provided we consider an interaction kernel of the form $ \d b_2 (\theta) = f(\theta ) \d \theta$, where $f \geq 0$ is integrable and even $f(\theta) = f(- \theta )$. 
These are exactly the conditions considered originally by M. Kac \cite{Kac 1956}. 
\vspace{1mm}

	\noindent 	\textit{(3) Symmetric Collisions of Order $K$. }
		Consider  the set of scattering angles 
		\begin{equation}
		\S_K = \{   \omega  =( \omega_1 , \ldots, \omega_K) \in \R^{dK } \  |  \ \omega_1^2 + \cdots  + \omega_K^2 = 1    \}   
		\end{equation}
		endowed with a probability measure  of the form $b(\omega) \d \omega$, where $(b\circ \sigma )(  \omega)  =  b(\omega)$ for all $\sigma \in S_K$. 
		We consider  the transformation law  $T_K$ given by 
		\begin{align} \label{central transformation}
		v_{  i  }^* = v_{  i }  
		- 
		2 \sum_{ \ell = 1 }^K 
		\< \omega_\ell , v_\ell   \> \, \omega_i \, , \qquad  i \in \{  1 , \ldots , K 			 \} \ . 
		\end{align}
		A straightforward calculation shows that $T_K$ is an involution that  conserves energy.
		In addition, the block form representation $[T_K^\omega ]_{i,j} = \delta_{i,j}\1_d - 2 \< \omega_j , \cdot  \> \omega_i  $
		and Eq. \eqref{permutation formula}
		imply
		that for all $\sigma \in S_K$ it holds
		\begin{equation}
		\sigma \circ
		 T^\omega_K  
		 \circ
		 \sigma^{ -1 }
		=
		T^{\sigma ( \omega)}_K   \  , \qquad  \omega \in \S_K \ .
		\end{equation}
		Since the underlying probability measure is invariant under the change of variables  $\omega  \mapsto  \sigma^{-1 } \omega$, one verifies that Hypothesis $\hhh$ is satisfied. 
		Note,   that $T_K$  does not conserve momentum. 
		However, if the space   $\S_K$  is replaced by 
		\begin{equation}
		\S_K^{'} = \{   \omega  \in  \S_K    \, | \,    \omega_1 + \cdots + \omega_K = 0    \}   
		\end{equation}
		one may easily verify that conservation of momentum holds.

	\subsection{Other models}
	In our results, we always assume   that $\h-\hhh$ are satisfied. 
	We note that  there exist models in the literature that fail to satisfy at least one of these conditions and we give
	two such examples.
	However, 
	our methods can be adapted to cover theses cases. 
	
	\subsubsection{Bobylev-Cercignani-Gamba Model}
For $ K  \leq M $, 
suppose that one is given   scalar velocities $(v_1, \ldots, v_K) \in \R^{K}$. 
	In \cite{gamba}, the authors propose a model for economic games in which the particles (or players) undergo a transformation law $T_{a,b}$ of the form 
	\begin{equation}
	v_i^* = a v_i  + b \sum_{   j  \neq i   } v_j  \, , \qquad i \in \{ 1, \ldots ,  K  \}
	\end{equation}
	where the real-valued coefficients $a$ and $b$ are random variables  on  a probability space $(\Omega, \calF , \mathbb{P})$. 
	Note that even when $K=2$, this transformation fails to conserve energy unless the coefficients are heavily constrained; conservation of energy would force 
	$  | a^2 - b^2 |  =1$. 
	Note, however,       that the   relation     
	$ 		[ \sigma \circ   T_{a,b }   \circ  \sigma^{ -1 }   ]_{i,j}
	=
	[     T_{a,b}     ]_{ \sigma ( i )  ,\sigma ( j ) } 
	=
	[ T_{a,b}  ]_{i,j} 
	$
	implies that condition $\hhh$ is verified,  independently of the underlying probability space,  or the specific structure of the coefficients $a$ and $b$. 	
	
	One may still consider the situation in which $d(\omega) := |  \det T_{a,b}^\omega   | >0$, that is, 
	the case for which $T_{a,b}$ is invertible. 
We expect that analogous results to
	Theorem \ref{thm bbgky to boltzmann}
	and
	Theorem
	\ref{thm prop of chaos} can be proven, leading to a   Boltzmann equation \eqref{Boltzmann equation},
	 with a collisional operator given by
	\begin{equation}
	Q_K 
	( f_1, \ldots , f_K )  ( v_1  )
	:=  
	K
	\int_{\S_{K  } \times \R^{  ( K  -  1  )   }}  
	\Big(
	\frac{1}{d(\omega)}
	(   \otimes_{\ell =  1 }^K  f_\ell  )
	\big( [ T_{a,b} ]^{-1}   V    \big)
	-
	(   \otimes_{ \ell =  1 }^K  f_\ell  )				\nonumber   
	(   V   )
	\Big)      \d b_K( \omega )      d v_{2 } \ldots  d v_{K  }. 
	\end{equation}

	\subsubsection{Non-symmetric ternary collisions} 
	Let us focus on the ternary case $K = 3$, and consider the $(2d-1)$-unit sphere $\S_3 := \S^{2d -1 }$ with the usual surface measure $\d \omega $. 
	As noted in  \cite{AmpatzoglouPavlovic2019}, 
	the relevant transformation law $T_{ter}$ is defined as 
	\begin{align}
	& v_1^* = v_1  -  c ( v_1 , v_2 , v_3; \omega) (\omega_1 + \omega_2 ) \\ 
	& v_2^* = v_2 + c ( v_1 , v_2 , v_3; \omega) \,  \omega_1 
	\qquad 
	\qquad 
	c ( v_1 , v_2 , v_3; \omega) = \frac{\langle \omega_1, v_2 - v_1 \rangle + \langle \omega_2, v_3 - v_1 \rangle}{1 + \langle \omega_1, \omega_2 \rangle}
	\\
	& v_3^* = v_3  + c ( v_1 , v_2 , v_3; \omega) \,  \omega_2 
	\end{align}
	where $\omega = (\omega_1, \omega_2) \in \S^{2d-1}$. 
	Despite conserving energy and momentum, Hypothesis $\hhh$ is not satisfied for this model. 
	We expect, however, that our methods can be adapted to show that similar results
	hold true, leading to a Boltzmann equation with a collisional operator of the form 
	\begin{equation*}
	    Q_3 
	    =
	    Q_3^{(1)}
	    +
	    2
	    Q_3^{(2)}  , 
	\end{equation*}
	where, for $f \in L^1(\R)$ we have 
 \begin{align}
     Q_3^{(1)}
     [f] (v_1 )
     &  = \int_{\S_3 \times \R^2 }
     \Big(
     f(v_1^*) f(v_2^*) f(v_3^*)
     -
     f(v_1) f(v_2) f(v_3)
     \Big) \d b (\omega)  d v_2 d v_3 \ , 
     \\
      Q_3^{(2)}
     [f] (v_2 )
     &  = \int_{\S_3 \times \R^2 }
     \Big(
     f(v_1^*) f(v_2^*) f(v_3^*)
     -
     f(v_1) f(v_2) f(v_3)
     \Big) \d b (\omega)  d v_1 d v_3  \ . 
 \end{align}

	\section{The Master equation}
	\label{master equation section}

In order to accommodate higher order interactions among particles, in this section we construct a new Markov process. We are inspired by the pioneering work of  M. Kac \cite{Kac 1956}, where the author outlined the procedure for constructing the Markov process corresponding to binary interactions. Our Markov process then leads to the Master equation 
\eqref{master eq}.  

For simplicity of the exposition we   work with   Euclidean space $\R^{dN}$, 
instead of restricting ourselves to the     \textit{energy spheres}
\begin{equation}
\calE_N := \{ V  =(v_1, \ldots, v_N) \in \R^{dN} \, \, :  \, \,   | V | = \sqrt{N}    \} \ . 
\end{equation}	
Our methods can be easily adapted to incorporate restrictions to $\calE_N$ (since conservation of kinetic energy satisfied by the transformation law leaves the energy spheres invariant).

%Subsequently, we will be studying its associated forward Kolmogorov equation for the law of the process; it will be our starting point for the derivation of Eq. \eqref{boltzmann type equation}. 
\vspace{1mm}

First, we describe the heuristics behind constructing our Markov process. As noted above, we incorporate higher-order collisions given by the transformation law
 \eqref{transformation law}. Then we give a sketch of the mathematical details of its construction as a jump process.
 We refer the reader to Appendix \ref{appendix markov} for a brief review of the theory of Markov processes, 
 including the notation that will be extensively used in this section.

\vspace{1mm}

In order to construct the continuous time Markov process $\textbf{V}_N$ we will first construct the simpler discrete time process $\textbf{Y}_N$, where $\textbf{Y}_N(n)$ represents the state of our $N$-particle system after the $n^\text{th}$ collision. 
Recall that we fix $(\S_K, T_K, b_K)$ with $K = 1, \ldots, M$ as introduced in Section \ref{section intro}.
Fix positive parameters $\{ \beta_K \}_{  K  =1 }^M   $ that satisfy the normalization condition
\begin{equation}
\beta_1 + \cdots  +  \beta_M  = 1   \ . 
\end{equation}
%together with  positive functions   $ 0 \leq b_K  \in  L^1(  \S_K  )$, for  $ K   \in  \{ 2  , \ldots, M  \}$,
%that
%satisfy 
%\begin{equation}\label{kernel condition}
%\int_{\S_K } b_K (\omega ) \d \omega = 1 \,  
%\quad  \textnormal{ and } \quad 
%b_K(\sigma \cdot \omega) = b_K(\omega ) \
%\forall \sigma \in S_K  \ . 
%\end{equation}
Here, the parameters $\beta_K$ represent the probability that a given collision will be of order $K$.
Given the distribution of $\textbf{Y}_N(n)$ we obtain the distribution of $\textbf{Y}_N(n+1)$, the system after one collision, by following the steps,
\begin{enumerate} 
	\item Select $K  \in \{ 1 , \ldots, M \}$
	 with probability $\beta_K$. This determines the order of the system's next collision.
	\item Select which $K$ of the $N$ particles will undergo this collision by choosing an ordered index $(i_1 , \ldots,  i_K)$ uniformly from $\calI_K$. This choice has probability $ (K !)^{-1 } \binom{N}{K }^{ -1 }$.
	\item Select the impact parameter $\omega \in \S_K$ according to the law $\d b_K(\omega )$.
	\item Update the velocities as follows, $$\textbf{Y}_N(n+1) = T_{i_1 , \ldots,  i_K}^\omega (v_1 , \ldots, v_N),$$ where $T_{i_1 , \ldots,  i_K}^\omega $ is given by \eqref{T extension}.
\end{enumerate}
If we start with a given initial distribution $\textbf{Y}_N(0)$ of our $N$-particle system we can $\textit{formally}$ construct our process $\textbf{Y}_N$ completely by repeating the above steps.
\vspace{1mm}

To construct $\textbf{Y}_N$ $\textit{rigorously}$ we introduce a Markov transition function acting on $V \in \R^{dN}$ and a Borel set $B \in \mathscr{B}(\R^{dN})$
\begin{equation}\label{markov 1}
\mu_N 
(V , B )
:=
\sum_{ K =1 }^M  \beta_K 
\sum_{(i_1 , \ldots,  i_K) }
\frac{1}{  K !  \binom{N}{ K }   }
\int_{\S_K  }
\mathds{1}_{B} (   T^\omega_{  i_1 , \ldots,  i_K   }{ V  } ) 
	\d b_K(\omega )
\, , 
\qquad V
\in
\R^{dN}
\, ,
\  B \in \mathscr{ B }(
\R^{dN}	
)   
\end{equation}
whose (bounded) generator 
$
	P_N : C_b  (  \R^{dN}  ) \rightarrow C_b   (  \R^{dN } )
 $
 satisfies
\begin{equation} \label{PN}
	(P_N  \vp )(V) :=
	\int_{  \R^{dN } } \vp ( U ) \mu_N (V , d U ) 
	= 
	\sum_{ K  = 1  }^M   \beta_K 
	\sum_{(i_1 , \ldots,  i_K)}
	\frac{1}{  K !  \binom{N}{K }   }    
	\int_{\S_K  }
	\!  \   \ \vp (   T^\omega_{ i_1 , \ldots,  i_K  }{ V  } )  \, 
		\d b_K(\omega )
	\quad V \in 
\R^{dN }
  \, .
\end{equation}

Given  $f_{N,0}   \in \mathrm{Prob}( 
\R^{dN }	
  ) $, the space of probability measures on $\R^{dN}$, by Proposition $\ref{PropEK}$
we can find a probability space $(\Sigma , \mathscr{F} , \mathbb{P}  ) $
and a 
Markov chain $   \{ \textbf{Y}_N ( n )      \}_{  n   = 0 }^\infty     :   \Sigma    \times \N_0   \rightarrow   
\R^{dN }	
 $, whose transition function is $\mu_N  $ and whose initial law is determined by $f_{N,0 }$. 
In other words, it holds that  for all $  n  \in \N_0  $ and $B  \in \mathscr{ B } (  
\R^{dN }	
  )$
\begin{align}\label{markov 2}
\mathbb{P}[  \textbf{Y}_N( n +1 )  \in  B  | \textbf{Y}_N (0 )  , \ldots, \textbf{Y}_N( n  ) ] =   \mu_N (\textbf{Y}_N( n  ) , B  ) 
\quad \textnormal{and} \quad
\mathbb{P} [\textbf{Y}_N(0 )\in  B ] =f_{N,0}( B ) \, . 
\end{align}
By computing the one step transition probability for $\textbf{Y}_N$, it can be checked that $\mu_N$ given in \eqref{markov 1} is the correct transition function for our process $\textbf{Y}_N$. 

\vspace{1mm}

In order to introduce continuous time into our process, consider an independent Poisson process
$\{ M(t)\}_{t = 0 }^\infty $ with rate $N $  (see Definition \ref{poisson process} in Appendix \ref{appendix markov}) and define the  Markov process $\textbf{V}_N(t)$ as the \textit{jump process}
\begin{equation} \label{jump process}
    \textbf{V}_N(t) := \textbf{Y}_N ( M(t)) .
\end{equation}
In particular, it can be shown that this jump process corresponds to the transition semigroup $\{  T(t) \}_{t \geq 0 }$ whose (bounded) generator is 
\begin{equation} \label{generator}
\L_N
:=
N ( P_N   -  \mathrm{id}) \, : 
	 C_b( 
	 \R^{dN}
	 )  \rightarrow 
	 C_b( 
	 \R^{dN}
	 ),
\end{equation}
where $P_N$ is defined in $\eqref{PN}$. The reader is referred to Section 2.2 of \cite{EthierKurtz} for details.

	Our starting point for the derivation of the Boltzmann equation \eqref{boltzmann type equation} will be the 
	dynamics associated to the law of the process $V_N(t)$.
	More precisely, let us denote its law  by  $F_{N}(t,\cdot ) :=  \mathbb{P}  [  V_N(t) \in \cdot \,  ]  $.
	This is a     probability measure on $\R^{dN }$, invariant under permutations--the symmetric property being equivalent to the particles being indistinguishable. 
	We make the additional assumption that the initial data has a symmetric density  $  f_{N,0} \in L^1_{ \mathrm{sym} }(\R^{d N})$. 
	Consequently, $F_N $ has  a density  $f_N $ that evolves according to  the \textit{Master equation}
	\begin{equation}
	\label{master eq 2}
	\begin{cases}
	\partial_t f_N = \Omega f_N \\
	f_N( 0   ) = f_{N,0} \in 
	L^1_{\mathrm{sym}}(   \R^{dN} ) 
	\end{cases} \,  ,
	\end{equation}
	where the generator $\Omega: L^1_{\mathrm{sym}} (\R^{dN}) \rightarrow L^1_{\mathrm{sym}} (\R^{dN})$ is the bounded linear operator determined  by the formula 
	\begin{equation}\label{omega}
	\Omega \,  f 
	= N \!  
	\sum_{K=1 }^M
	\beta_K 
	\sum_{  i_1 \cdots i_K  }
	\frac{1}{  K !  \binom{N}{ K  }   } 
	\int_{\S_K   }
	\big( 
	f \circ  T^\omega_{  i_1, \cdots, i_K   }  - f  \big)  \, \d  b_K (\omega )    \ , 
	\qquad 
	f \in L^1_{\mathrm{sym}}(   \R^{dN} ) \ . 
	\end{equation}

	%Our starting point for the derivation of a Boltzmann-type equation 
	%will be the associated \textit{forward Kolmogorov equation} (or, \textit{Master equation})  
	%\begin{equation}  \label{forward}
	%\begin{cases}
	%\partial_t F_N = \L_N^* F_N \\
	%F_N( 0 , \cdot ) = F_{N,0} \in \mathrm{Prob}(   \R^{dN} )
	%\end{cases} \,  
	%\end{equation}
	%where  the generator  is the Banach space adjoint of $\L_N$. Eq. \eqref{forward} follows from the fact that $V_N(t)$ is a Markov process associated to the semigroup
	%$ (   \exp (t \L_N) )_{t = 0}^\infty$; see Eq. \eqref{transition}. 
	
	%\begin{remark}
	%	Given $\sigma \in S_N$  we denote its action on $L^1_\mathrm{sym}(\R^{dN})$  by  $ f_\sigma (V) = f (\sigma \cdot V)$. 
	%	In particular, one may verify that the generator of the Master equation satisfies  $\Omega  ( f_\sigma  ) = (\Omega f)_\sigma $. 
	%	Consequently, $\Omega$  preserves permutational symmetry, i.e.  it holds  $f_N(t) \in L^1_\mathrm{sym}(\R^{dN})$ for all $t \geq 0 . $  
	%\end{remark}
	
	\begin{remark} \textit{Relationship to the deterministic setting.}
	    The Liouville equation is the deterministic analogue of the Master equation \ref{master eq 2}. Furthermore, $N$ is chosen for the rate of the Poisson process $M(t)$ in \eqref{jump process} to ensure a constant number of collisions per unit time per particle in the limit $N \rightarrow \infty$ and is analogous to the Boltzmann-Grad scaling in the deterministic setting.
	\end{remark}

	\section{Proof of Theorem \ref{thm convergence} }
	\label{section proof convergence}
	Throughout this section, we  assume that the estimates contained in Condition \ref{condition 1} and \ref{condition 2}
	are satisfied, together with assuming that $T>0$ satisfies \eqref{time}. 
	First, we   introduce   some notation and prove some preliminary inequalities. 
	
	\vspace{1mm}
In what follows, we will be using the same notation introduced in Subsection \ref{subsec-functional}	
	For $s \in \N $ let us introduce the canonical projections
	\begin{equation}
	\pi_s :  X   =  
	\bigoplus_{ r \in \N }
	X^{(r)    } \longrightarrow   X^{(s)}
	\end{equation}
	defined for $F = (f^{(s)})_{s \in \N} \in X$
	as
	$ \pi_s(F) := f^{(s)}$. 
	In particular, in terms of the objects 
	$\bF : [0,T] \rightarrow X $ and
	$\bF_N : [0,T] \rightarrow X $, convergence of observables
	(see Definition \ref{definition 3})
	is equivalent to the following statement: 
	for all $	  s  \in \N $ and for all $   \vp_s \in  X^{(s)*}  $ there holds 
	\begin{equation}
	\lim_{N \rightarrow \infty }  \langle  \pi_s   \bF _N (t), \vp_s    \rangle   =  
	\< \pi_s   \bF(t), \vp_s    \>  
	\end{equation} 
	uniformly in $t \in [0,T]$. 
	
	\vspace{1mm}
	
Let $\bC^N , \bC^\infty : X \rightarrow X $ 
be the linear transformations
introduced in   \eqref{C N} and \eqref{C infinity}, respectively.
The introduction of the projections $(\pi_s)_{s \in \N }$ will be particularly useful for proving norm estimates for the $s$-th components of the \textit{iterated} powers of $\bC^\infty$ ($\bC^N$, resp.). Namely,  for the operators

\begin{equation}
     (\bC^\infty)^n 
    =
    \underbrace{\bC^\infty \circ \cdots \circ  \bC^\infty }_{n \text{ times }}
     \, , 
     \qquad 
     n \in \N \ . 
\end{equation}

	More precisely, the following lemma holds true.

	\begin{lemma}\label{lemma powers}
	(a)
		If  $\bC^N$ satisfies Condition \ref{condition 1}, then
		 for every $\ell  \in \N$,  $ s \in \N$ 
		 and $F \in X $ there holds 
		\begin{align}				  
		\textstyle 
		\|  \pi_s 
		\big[ 
		(\bC^N)^\ell 
		F
		\big] 
		\|_{  X^{(s)} } 
		\	\leq  \ 
		\sum_{ k_1 = 0}^m
		\cdots
		\sum_{k_\ell = 0}^m 
		s ( s+ k_1 ) &  \cdots (s + k_1 + \cdots   +  k_{\ell -1 })   \\ 
		&  \times R_{k_1} \cdots R_{k_\ell} 
		\|  \pi_{ s + k_1 \cdots + k_\ell  } F\|_{  X^{(s + k_1 \cdots + k_\ell)} }			 \ .	\nonumber
		\end{align}
		
		\noindent (b)
		If  $\bC^\infty $ satisfies Condition \ref{condition 2}, then
		 for every $\ell  \in \N$,  $ s \in \N$ 
		 and $F \in X $ there holds 
		\begin{align}				  
		\textstyle 
		\|  \pi_s 
		\big[ 
		(\bC^\infty)^\ell 
		F
		\big] 
		\|_{  X^{(s)} } 
		\	\leq  \ 
		\sum_{ k_1 = 0}^m
		\cdots
		\sum_{k_\ell = 0}^m 
		s ( s+ k_1 )  & \cdots (s + k_1 +  \cdots   +  k_{\ell -1 })   \\ 
		&  \times \rho_{k_1} \cdots \rho_{k_\ell} 
		\|  \pi_{ s + k_1 \cdots + k_\ell  } F\|_{  X^{(s + k_1 \cdots + k_\ell)} }			 \ . 	\nonumber
		\end{align}

	\end{lemma}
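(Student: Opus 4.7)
The proof is a straightforward induction on $\ell$, and parts (a) and (b) are identical modulo swapping the constants $R_k$ for $\rho_k$, so I will focus on (a). The idea is to unwind one power of $\bC^N$ at a time using the definition \eqref{C N}, apply the triangle inequality to split the sum over $k_1,\ldots,k_m$, and then invoke Condition \ref{condition 1} on each piece.

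For the base case $\ell=1$, expanding the definition gives
\begin{equation*}
\pi_s\bigl[\bC^N F\bigr] \;=\; \sum_{k=0}^m \C^N_{s,s+k}\,f^{(s+k)},
\end{equation*}
so by the triangle inequality together with Condition \ref{condition 1},
\begin{equation*}
\|\pi_s[\bC^N F]\|_{X^{(s)}} \;\leq\; \sum_{k=0}^m R_k\, s\,\|\pi_{s+k}F\|_{X^{(s+k)}},
\end{equation*}
which agrees with the claimed formula (empty product of factors $s+k_1+\cdots+k_{j-1}$ for $j=1$ is understood as the single factor $s$).

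For the inductive step, suppose the bound holds at level $\ell-1$. Writing $(\bC^N)^\ell F=\bC^N\bigl((\bC^N)^{\ell-1}F\bigr)$ and projecting,
\begin{equation*}
\pi_s\bigl[(\bC^N)^\ell F\bigr] \;=\; \sum_{k_1=0}^m \C^N_{s,s+k_1}\,\pi_{s+k_1}\bigl[(\bC^N)^{\ell-1}F\bigr].
\end{equation*}
Apply the norm bound from Condition \ref{condition 1} to $\C^N_{s,s+k_1}$, yielding a factor $R_{k_1}\,s$, and then apply the inductive hypothesis to $\pi_{s+k_1}[(\bC^N)^{\ell-1}F]$ with the shifted base index $s+k_1$ in place of $s$. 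This produces the factor
\begin{equation*}
(s+k_1)(s+k_1+k_2)\cdots(s+k_1+\cdots+k_{\ell-1})\,R_{k_2}\cdots R_{k_\ell}
\end{equation*}
multiplying $\|\pi_{s+k_1+\cdots+k_\ell}F\|$. Multiplying by the leading $R_{k_1}s$ and summing $k_1$ from $0$ to $m$ assembles the full claimed expression.

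There is no genuine obstacle: the only thing to be careful about is ensuring the index shift in the inductive hypothesis is applied correctly (replacing every $s$ in the statement by $s+k_1$), and checking that the ``empty'' leading factor at $\ell=1$ matches the convention that the first term in the product is $s$. Part (b) is obtained verbatim by replacing $\bC^N$ with $\bC^\infty$, $R_k$ with $\rho_k$, and Condition \ref{condition 1} with Condition \ref{condition 2}.
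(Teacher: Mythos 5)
Your proof is correct and takes essentially the same approach as the paper: an induction on $\ell$ combining the triangle inequality with the termwise bounds of Condition \ref{condition 1} (resp.\ Condition \ref{condition 2}). The only cosmetic difference is that you peel off the outermost operator and apply the inductive hypothesis at the shifted index $s+k_1$, whereas the paper writes $(\bC^\infty)^{\ell+1}F=(\bC^\infty)^{\ell}\bigl(\bC^\infty F\bigr)$ and applies the hypothesis to $\bC^\infty F$ at the fixed index $s$; both are valid since the hypothesis is quantified over all $s$ and all $F$.
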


	\begin{proof}
	We shall only present a proof for \textit{(b)}; that of \textit{(a)} is identical. 
		In what follows, we omit the subscript $X^{(s)}$ from the norms $\|  \cdot \|_{X^{(s)}}$.
		The proof goes by induction on $ \ell  \in \N$. Indeed, for $ \ell  =1 $ let $s \in \N $, $F = (f^{(s)})_{s \in \N}  \in X $  and estimate using Condition \ref{condition 2}
		that 
		\begin{align}
		\|   \pi_s \big[ \bC^\infty F \big]		\|  
		\ = \ 
		\| 					\nonumber  
		\C_{s,s}^\infty 
		f^{(s)}
		+
		\cdots
		+
		\C_{s, s+m }^\infty 
		f^{(s+m )}
		\|  
		&   \ \leq \  
		s				\nonumber 
		\big(
		\rho_0   \| f^{(s)} \|  
		+ 
		\cdots
		+
		\rho_m   \| f^{(s+ m  )}  \|  
		\big) \\ 
		&  \  =   \ 
		s  \sum \rho_k 
		\|    \pi_{s + k }    \,     F    \|   \textstyle 
		\label{estimate in proof lemma 1} \, .
		\end{align}
		Assume now that the result holds up to $ \ell  \in \N$. Then, for $s \in \N$ and $F  \in X $ we have that 
		\begin{align}
		\| \pi_s 
		\big[ 
		(\bC^\infty)^{  \ell  +1 } F 
		\big] 
		\|   \nonumber 
		&     \ \leq \ 
		\sum_{ k_1 = 0}^m
		\cdots
		\sum_{k_\ell = 0}^m 
		s ( s+ k_1 ) \cdots (s + k_1 + \cdots    +  k_{\ell -1 })   \\ 
		&    \qquad \qquad  \times \rho_{k_1} \cdots \rho_{k_\ell} 
		\|  \pi_{s + k_1 \cdots + k_\ell  }  \bC^\infty F\|  	 \ , 		\nonumber \\ 
		&    \ \leq \ 
		\sum_{ k_1 = 0}^m
		\cdots
		\sum_{k_\ell = 0}^m  											\nonumber 
		s ( s+ k_1 ) \cdots (s + k_1 + \cdots    +  k_{\ell -1 })   \\ 
		&   \qquad   \times \rho_{k_1} \cdots \rho_{k_\ell} 
		\sum_{k_{\ell+1}=0}^m
		(s + k_1 + \cdots k_\ell)
		\rho_{k_{\ell + 1 }}
		\|   \pi_{s + k_1 + \cdots k_{  \ell + 1 }} F  \|  \ . 
		\end{align}
		This finishes the proof of the lemma after elementary manipulations.  
	\end{proof}
	
	\vspace{1mm}
	
	The following lemma    will be useful throughout the proof of convergence.
	We recall that the well-posedness time $T_*$ was defined in 
	\eqref{time}. 
	
	\vspace{2mm}
	
	\begin{lemma}\label{elementary bounds}
	Let $s\in \N$,  $   \mu \geq -1      $ and
		 let $n \geq 10$.

\noindent 		(a)
If $\C^N$ satisfies Condition \ref{condition 1}, then for all $F \in X_\mu  $ there holds 
			\begin{equation}
		\textstyle 
		\|   \pi_s  
		\big[ 
		(   \bC^N)^n  F
		\big] 
		\|_{X^{(s)} }
		\    \leq  \ 
		\,   
		s e^{-\mu s }
\, 	n!  \,   (m T_*^{-1})^n 
		\, 
		( e \, n)^{s/m }  
\|F \|_\mu  \ . 
		\end{equation}

	\noindent (b)
		If $\C^\infty$ satisfies Condition \ref{condition 2}, then for all $F \in X_\mu $ there holds 
		\begin{equation}
		\textstyle 
		\|   \pi_s   
		\big[  
		(   \bC^\infty)^n  F 
				\big] 
		\|_{X^{(s)} }		\    \leq  \ 
		\,   
		s e^{-\mu s }
\, 	n!  \,   (m T_*^{-1})^n 
		\, 
		( e \, n)^{s/m }  
\|F \|_\mu  \ . 
		\end{equation}
	\end{lemma}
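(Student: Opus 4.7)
The plan is to handle parts (a) and (b) in parallel, since the arguments are identical after swapping $R_k$ for $\rho_k$; I focus on (b). The starting point is Lemma \ref{lemma powers}(b). Writing $K_j := k_1+\cdots+k_j$ with $K_0 := 0$, and using $\|\pi_r F\|_{X^{(r)}} \leq e^{-\mu r}\|F\|_{\mu}$ from the definition of $\|\cdot\|_\mu$, I obtain
\begin{equation*}
\|\pi_s[(\bC^\infty)^n F]\|_{X^{(s)}} \leq e^{-\mu s}\|F\|_\mu \sum_{k_1,\ldots,k_n=0}^m s(s+K_1)\cdots(s+K_{n-1})\, \rho_{k_1}\cdots\rho_{k_n}\, e^{-\mu K_n}.
\end{equation*}
The strategy is to decouple this into a deterministic bound on the product of increasing $s$-terms together with a factorized bound over the $\rho_k$ weights.

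For the product of $s$-terms, since $K_j\leq jm$ for every $j$, it is majorized uniformly in $(k_1,\ldots,k_n)$ by $\prod_{j=0}^{n-1}(s+jm) = m^n \prod_{j=0}^{n-1}(s/m+j)$. Setting $a:=s/m$ and pulling out $j$ from each factor with $j\geq 1$ gives the identity $\prod_{j=0}^{n-1}(a+j) = a\,(n-1)!\prod_{j=1}^{n-1}(1+a/j)$. The elementary estimates $1+x\leq e^x$ and the harmonic-number bound $H_{n-1} := \sum_{j=1}^{n-1}1/j \leq \ln(en)$ then yield $\prod_{j=1}^{n-1}(1+a/j)\leq e^{aH_{n-1}}\leq (en)^a$, so that
\begin{equation*}
s(s+K_1)\cdots(s+K_{n-1}) \leq s\, m^{n-1}(n-1)!\,(en)^{s/m}.
\end{equation*}

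For the sum over $(k_j)_{j=1}^n$, factoring $e^{-\mu K_n}=\prod_{j=1}^n e^{-\mu k_j}$ reveals a tensor-product structure:
\begin{equation*}
\sum_{k_1,\ldots,k_n=0}^m \rho_{k_1}\cdots\rho_{k_n}\, e^{-\mu K_n} = \Big(\sum_{k=0}^m \rho_k\, e^{-\mu k}\Big)^n.
\end{equation*}
The assumption $\mu\geq -1$ gives $e^{-\mu k}\leq e^k$, and the definition of $T_*$ in \eqref{time} yields $\sum_{k=0}^m \rho_k\, e^k \leq T_*^{-1}$, so the sum is bounded by $T_*^{-n}$. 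Combining the two produces
\begin{equation*}
\|\pi_s[(\bC^\infty)^n F]\|_{X^{(s)}} \leq s\, e^{-\mu s}(n-1)!\, m^{n-1}(en)^{s/m} T_*^{-n}\|F\|_\mu,
\end{equation*}
which is stronger than the claimed bound by a factor of $nm$; the hypothesis $n\geq 10$ absorbs this slack comfortably. Part (a) follows verbatim upon replacing $\rho_k$ by $R_k$ and Condition \ref{condition 2} by Condition \ref{condition 1}.

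The main technical point is the product estimate: the harmonic-sum inequality $\prod_{j=1}^{n-1}(1+a/j)\leq (en)^a$ is essential, as a cruder bound such as $(s+(n-1)m)^n$ would produce the wrong exponential growth in $s$ and break the summability of the $n!$-weighted series underlying the Dyson-type expansion used in the proof of Theorem \ref{thm convergence}.
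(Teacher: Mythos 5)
Your proof is correct and follows essentially the same route as the paper: bound the telescoping product $s(s+K_1)\cdots(s+K_{n-1})$ by $\prod_{j=0}^{n-1}(s+jm)$, extract $(en)^{s/m}$ via the harmonic-sum estimate, and factorize the $\rho$-sum as $\bigl(\sum_k\rho_k e^{-\mu k}\bigr)^n\leq T_*^{-n}$ using $\mu\geq -1$ and the definition of $T_*$. The only (harmless) difference is that you keep the sharper $(n-1)!\,m^{n-1}$ where the paper relaxes to $n!\,m^n$, so the claimed bound follows for trivial reasons and your remark that $n\geq 10$ is needed to ``absorb the slack'' is superfluous.
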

	
	\vspace{2mm}
	
	\begin{proof}
	Similarly as before, we shall only present a proof of \textit{(b)}. 
	Let $s,n,\mu$ be as in the statement of the lemma, and for the sake of the proof let us denote $\alpha = s/m$. 
	Then, 
		for any $0 \leq k_1 , \cdots , k_n \leq m  $
		we have the following upper bound 
			\begin{align}
			\label{lemma 5.2 eq 1}
		s (s + k_1 ) \cdots 
		(s + k_1 + \cdots +   k_{n-1 } ) 
		 & \leq     \nonumber 
		s (s + m )
		\cdots 
		(s + (n-1 )m ) \\
		 & 
		 =                  \nonumber
		 s m^{ n-1 } 
		(s m^{-1 }+ 1 )\cdots 
		(   sm^{-1 }  + (n -1 )  )  \\
			 & 
		 =
		 s m^{ n-1 }  
		( \alpha + 1 )\cdots            \nonumber         
		(   \alpha  + (n -1 )  )  \\
		 & 
		 =
		 s m^{ n-1 } (n-1 )!    \nonumber
		( \alpha + 1 )\cdots 
		\Big(    \frac{\alpha }{n -1 }  +  1  \Big) \\
		& \leq                      
		s\, m^n n! 
		( \alpha + 1 )\cdots 
		\Big(    \frac{\alpha }{n }  +  1  \Big)
		\end{align}
For notational convenience, we have replaced $n-1 $ by $n$; since we are only interested in the asymptotic behaviour when $ n \rightarrow \infty$,  such replacement is harmless. 
Next,  	using the fact that $\log (1 + x ) \leq x $ for all $x \geq   0 $
	one finds 
	\begin{align}
	    	( \alpha + 1 )\cdots 
		\Big(    \frac{\alpha }{n  }  +  1  \Big) 
	 & 	=               \nonumber
		\exp \log \Bigg(
			( \alpha + 1 )\cdots 
		\Big(    \frac{\alpha }{n }  +  1  \Big) 
		\Bigg)  \\
		&  = 
			\exp   \Bigg(
			\log 
			( \alpha + 1 ) + \cdots +           \nonumber
	 \log 	\Big(    \frac{\alpha }{n  }  +  1  \Big) 
		\Bigg)  \\ 
		& \leq 
			\exp   \Big(
		 \alpha  \big( 
		 1 + 1/2 + \cdots + 1/n 
		 \big)
		 \Big)
		 \ . 
	\end{align}
	For $  n \geq 10 $
	one has the standard bound
	$  \sum_{j =1 }^n 1/j \leq \log(n) + 1$. Consequently, we find
	\begin{align}
	\label{lemma 5.2 eq 2}
	    	( \alpha + 1 )
	    	\cdots 
		\Big(    
		\frac{\alpha }{n}  +  1 
		\Big) 
	& \leq 
\exp\Big(
\alpha  \log (n ) + \alpha 
\Big)
	=
	( e \, n )^{s/m } \ .
	\end{align}
		Next, we use the definition of the norm $ \|  \cdot  \|_\mu$ (see \eqref{mu norm}) 
		to find that 
		\begin{equation}
		\label{lemma 5.2 eq 3}
		\|  \pi_{s + k_1 + \cdots + k_n  } F  	\|_{ X^{(s + k_1 + \ldots +   k_n )} }
		\leq 
		\exp \big( -\mu 	(s + k_1 + \cdots + k_n)	\big) \|  F \|_\mu \ . 
		\end{equation}
		Hence, 
		by 		Lemma \ref{lemma powers}
and 
		\eqref{lemma 5.2 eq 1}, \eqref{lemma 5.2 eq 2}, \eqref{lemma 5.2 eq 3} 
	we find
		\begin{align}
		\textstyle 
		\|   \pi_s   (   \bC^\infty)^n  F \|_{ X^{(s)} }
	 	\   \leq     \      \nonumber 
	 &
		\,   
		s 
		\, 
		m^{ n }
		n! 
		\, 
		( e \, n)^{s/m }  
		\\
	 & \times 
	 \sum_{k_1 \ldots k_n}^m
	 \exp \big(  -\mu   (s + k_1 + \cdots + k_{n-1} )  \big)
		\rho_{k_1} \cdots \rho_{k_n} \, \| F \|_{\mu}  
		\end{align}
		from which   the desired estimate follows after elementary manipulations, taking into account the definition of the well-posedness time $T_*$--see \eqref{time}. 
	\end{proof}

 We are now ready to give a proof of Theorem \ref{thm convergence}.

	\begin{proof}[Proof of Theorem \ref{thm convergence}]
		Let $\bF_N = (f_N^{ (s)  })_{s \in \N } \in \bX_\bmu$ and 
		$\bF = (f^{(s)})_{s \in \N }\in \bX_\bmu $ 
		be as in the statement of Theorem \ref{thm convergence}
		with initial data
		$F_{N,0} = ( f_{N,0}^{ (s) } )_{s\in\N }\in X_{0}$ and
		$F_0 = (f_0^{(s)})_{s \in \N } \in X_0 $, respectively. 
		Recall that existence and uniqueness of mild solutions of both hierarchies is guaranteed by Proposition \ref{corollary 1} and Proposition \ref{corollary 2}.
		The idea of the proof is as follows: for fixed $s\in \N $,  starting from   both the finite and the  infinite hierarchy in mild formulation, we iterate the integral formulas  $ n \in \N $ times. 
		Next, we show that the initial conditions match in the limit $ N   \rightarrow \infty $ and the integral remainder term vanishes as $n     \rightarrow  \infty $, uniformly in $N$. 
		
		\vspace{1mm}
		
		Let us be more precise. First, we write the mild formulation of the solutions of both hierarchies 
		\begin{align}
		\bF_N (t) & = F_{N,0} + \int_0^t \bC^N   \bF_N(\tau ) \d \tau  \, ,  \\
		\bF (t) & = F_{0} + \int_0^t \bC^\infty   \bF (\tau ) \d \tau  \, .
		\end{align}
		Next, let us fix $s \in \N$  and iterate $n$ times the above equations to get
		\begin{align}
		\bF_N (t) &  \ = \ 
		\sum_{ \ell  = 0}^n 
		\frac{t^\ell }{\ell !}
		(\bC^N)^\ell  F_{N ,0} 
		\ + \ 
		\int_0^t \cdots \int_0^{t_{n  }}
		(\bC^N )^{ n + 1 }
		\bF_N(t_{ n + 1 }) \, \d t_{n+ 1} \cdots \d t_{1} \\
		\bF  (t) & \  =  \ 
		\sum_{ \ell  = 0}^n 
		\frac{t^\ell }{\ell !}
		(\bC^\infty )^\ell  F_{0} 
		\ + \ 
		\int_0^t \cdots \int_0^{t_{n  }}
		(\bC^\infty  )^{ n + 1 }
		\bF(t_{ n + 1 }) \, \d t_{n+ 1} \cdots \d t_{1} 
		\end{align}
		Once we project with $\pi_s$ and consider the pairing with 
		$\vp_s \in X^{(s)*}$
		, we note that   the contribution to this difference arises due to two terms:
		\begin{align}
		\big| 
		\langle \pi_s   \bF _N (t), \vp_s    \rangle  -  
		\< \pi_s   \bF(t), \vp_s    \> 
		\big| 
		\ \leq  \ 
		\calS_{N,n}(t) 
		\    +  \  
		\calI_{N,n}(t)
		\end{align}
		where $\calS_{N,n}(t)$ is the \textit{sum} given by 
		\begin{equation}
		\calS_{N,n}(t) 
		\  :=  \  
		\sum_{\ell  = 0}^n 
		\, \frac{t^\ell }{\ell !} \, 
		\big| 
		\langle \pi_s    (\bC^N)^\ell  F_{N ,0}     , \vp_s    \rangle  -  
		\< \pi_s   (\bC^\infty )^\ell  F_0, \vp_s    \> 
		\big| 
		\end{equation}
		and   where $\calI_{N,n}(t)$ is an \textit{integral remainder} term defined as 
		\begin{equation}
		\calI_{N,n} 
		(t) : = 
		\int_0^t 	\!  	\cdots  \! 			 \int_0^{t_{n  }}
		\bigg( 
		\|  \pi_s  (\bC^N  )^{ n + 1 }
		\bF_N    (t_{ n + 1 })  \|_{ X^{(s)} }   
		+
		\|   \pi_s  (\bC^\infty  )^{ n + 1 }
		\bF(t_{ n + 1 }) \, 
		\|_{ X^{(s)} }
		\bigg) 
		\d t_{n+ 1} \cdots \d t_{1} \,  , 
		\end{equation}
		and we assume without loss of generality that $ \|  \vp_s \|_{X^{  (s)* }} \leq 1 $.  
		We study these two terms separately.

		\vspace{2mm}

		\noindent \textsc{Integral Remainder Terms  $\calI_{N,n}$.} It  suffices to estimate the time  integrals, with respect to $n$, uniformly in $N$.
		We actually show that each integral, separately, converges to zero in $X^{(s)}$ norm, once we project via the map $\pi_s$. 
				Since the estimates are identical for  $\bF_N (t)$
	and  $\bF(t)$, we   only present  a proof for the latter. 
		
		%First, we start by noting that the following estimates follow from
		%Proposition \ref{corollary 1} and \ref{corollary 2}: for all $t \in [0,T]$ and $s \in \N $ there holds 
		%\begin{align}\label{estimates 1}
		%& \|   \pi_s  \bF (t) \|_{L^1 }
		% = \| f^{(s)}(t)  \|_{L^1 }
		% \leq e^{  - \bmu (t) s   } \| \bF \|_{\bmu_T}
		% \leq 2 e^{  - \bmu (t) s   } \| F_0 \|_{  0}  \\ 
		%&  \|   \pi_s    \bF_N (t) \|_{L^1 }
		% = \|    f_N^{(s)}(t)  \|_{L^1 }
		% \leq e^{  - \bmu (t) s   } \|    \bF_N   \|_{\bmu_T}
		% \leq 2 e^{  - \bmu (t) s   } \|  F_{N,0 } \|_{ 0}
		%\end{align}

 \vspace{2mm}

First, we introduce the following notation, convenient for estimating the nested integrals: 		
$$ \d \bar t_{n+1} \equiv  \d t_{n+1 } \cdots \d t_1 \ , \qquad  n \in \N \ . $$
Further, we recall that in Section \ref{section main results} we have introduced the function
$$
\bmu(t) = - t /T \ , \qquad t \in [0,T]
$$
		where $T <   m^{-1} T_* $, see \eqref{time}.
In view of  Lemma  \ref{elementary bounds} 
we find that,
 for all $ n \in \N $ and $t_{n+1} \leq t   \leq  T $, 
 the following estimate holds  
		\begin{align}
		    \|   
		\pi_s  
		(\bC^\infty  )^{ n + 1 }
		\bF(t_{ n + 1 })
		\|_{ X^{(s)} }	
		\leq 
			s 
			e^{-\bmu(t_{n + 1 }) s }
\, 	( n  + 1 )!  \,  
(m T_*^{-1})^{n+1} 
		\, 
		[ e   (n+1) ]^{s/m }  
\|\bF(t_{n+1}) \|_{ \bmu (t_{n+1}) }  \ . 
		\end{align}
		Consequently, we find
		\begin{align}
		    	\int_0^t 
		\! \cdots \!  
		\int_0^{t_{n  }}
		\|   
		\pi_s  
		(\bC^\infty  )^{ n + 1 }
		\bF(t_{ n + 1 })
	 \|_{ X^{(s)} }	        
	   	\d \bar t _{n +1 }   
	    & 	\leq  
			  	 	s           \nonumber
		   	( n  + 1 )!  
(m T_*^{-1})^{n+1} 
		\, 
	  	[ e   (n+1) ]^{s/m }  \\ 
		   &  \qquad \qquad \qquad  \times 
		   \int_0^t 
		\! \cdots \!  
		\int_0^{t_{n  }}                \nonumber
		 	e^{-\bmu(t_{n + 1 }) s }
\| \bF (t_{n+1})\|_{ \bmu (t_{n+1}) }   
\d \bar t _{n +1 }   \  ,  \\ 
		  & 	 \leq   	s 
		   	( n  + 1 )!             \nonumber
(m T_*^{-1})^{n+1} 
		\, 
		[ e   (n+1) ]^{s/m }  \\
		   & \qquad \qquad \qquad  \times            \nonumber
		   \| \bF \|_{ \bmu }   
		   	\int_0^t 
		   	\! \cdots \!  
		\int_0^{t_{n  }}
			e^{-\bmu(t_{n + 1 }) s }
\d \bar t _{n +1 }                \ ,      \nonumber
      \\          
			 & 	  \leq    	s 
		   	( n  + 1 )!  
(m T_*^{-1})^{n+1}                                  \nonumber
		\, 
		[ e   (n+1) ]^{s/m }   \\
		   	 &\qquad \qquad  \qquad \times \|\bF \|_{ \bmu }           \nonumber
		 e^s  \frac{T^{ n + 1 }}{(n + 1)! }     \      ,      \\
		   & =  
		   	s    (m T T_*^{-1} )^{n+1}                      
		[ e   (n+1) ]^{s/m }  
		e^s 
				   \| \bF \|_{ \bmu }   
		\  . 
		\end{align}
			We recall that $T$ was chosen small enough in Eq. \eqref{time}
		so that   $  m T T_*^{-1} <1  $ holds true. Therefore, as $ n \rightarrow \infty$, the integral remainder term vanishes.

		\vspace{2mm}

		\noindent \textsc{Controlling the Sum $\calS_{N,n}$.} 
		First, we show  that the following result holds. 
		
		\vspace{1mm}
		
		\begin{lemma}
			Let $F_{N} \in X$ converge pointwise weakly to $F \in X$, 
			and let $\bC^N$ converge to $\bC^\infty $ in the sense of Definition \ref{definition convergence}.  
			Then,  for all $ \ell  \in \N$ it holds that $(\bC^N)^\ell  F_N$ converges pointwise  weakly to $(\bC^\infty)^\ell  F$. 
			In other words, for all $s \in \N$, $ \ell  \in \N$ and 
			$\vp_s\in  { X^{(s)* } }	 $
			it holds that 
			\begin{equation}
			\lim_{N \rightarrow \infty }	\nonumber 
			\langle
			\pi_s \big[     (\bC^N)^\ell  F_{N }   \big]   , \vp_s    
			\rangle  =
			\< \pi_s   \big[   (\bC^\infty )^\ell  F     \big] , \vp_s    \>  \, .
			\end{equation}
		\end{lemma}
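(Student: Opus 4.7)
The plan is a straightforward induction on $\ell \in \N$, exploiting the fact that Definition \ref{definition convergence} is tailor-made to be stable under this kind of diagonal iteration. The base case $\ell = 1$ is nothing other than the hypothesis that $\bC^N$ converges to $\bC^\infty$, applied directly to the pointwise weakly convergent sequence $F_N \xrightarrow{pw} F$ supplied in the statement.

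For the inductive step, suppose $(\bC^N)^\ell F_N \xrightarrow{pw} (\bC^\infty)^\ell F$ for some $\ell \geq 1$. Setting $G_N := (\bC^N)^\ell F_N$ and $G := (\bC^\infty)^\ell F$, the inductive hypothesis simply reads $G_N \xrightarrow{pw} G$. Applying the convergence $\bC^N \to \bC^\infty$ once more to this newly produced sequence yields
\[
\bC^N G_N \ \xrightarrow{pw} \ \bC^\infty G,
\]
which, upon unwinding the definitions of $G_N$ and $G$, is precisely
\[
(\bC^N)^{\ell+1} F_N \ \xrightarrow{pw} \ (\bC^\infty)^{\ell+1} F,
\]
closing the induction. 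Testing against an arbitrary $\varphi \in X^{(s)*}$ and using the projection $\pi_s$ recovers the pairing statement displayed in the lemma.

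I do not anticipate any genuine obstacle. The subtle point worth highlighting is that in the inductive step both the operator $\bC^N$ and its argument $(\bC^N)^\ell F_N$ depend on $N$, and that pointwise weak convergence in the sense of Definition \ref{definition 3} carries no uniform norm bound; indeed, $\|G_N\|_0$ may well blow up as $N \to \infty$. What makes the iteration pass through nonetheless is precisely that Definition \ref{definition convergence} demands $T^N F_N \xrightarrow{pw} T F$ for \emph{every} pointwise weakly convergent sequence, with no side hypothesis on norms. Consequently, neither Condition \ref{condition 1} nor Condition \ref{condition 2}, nor the quantitative bounds of Lemma \ref{lemma powers} and Lemma \ref{elementary bounds}, are needed in this lemma; those play their role elsewhere, in controlling the integral remainder $\calI_{N,n}$ and in summing the contributions appearing in $\calS_{N,n}$ in the main proof of Theorem \ref{thm convergence}.
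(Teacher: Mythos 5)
Your proof is correct and is essentially identical to the paper's: both proceed by induction on $\ell$, with the base case being the hypothesis of operator convergence and the inductive step applying Definition \ref{definition convergence} to the sequence $G_N = (\bC^N)^\ell F_N \xrightarrow{pw} G = (\bC^\infty)^\ell F$. Your observation that the definition requires no uniform norm control on $G_N$ is exactly the point that makes the iteration legitimate.
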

		
		\vspace{1mm}
		
		\begin{proof}
			The proof goes by induction on $ \ell  \in \N$. 
			The case $ \ell  =1 $ follows from the definition of convergence from $\bC^N$ to $\bC^\infty$. 
			Assume now that the result holds for   $ \ell  \in \N $, i.e.  $G_N = ( \bC^N)^\ell  F_N$ converges weakly to $G = (\bC^\infty)^\ell  F$. 
			It follows that $\bC^N G_N$ converges pointwise 
			weakly to $ \bC^\infty G$. This finishes the proof. 
		\end{proof}

		\noindent \textsc{Conclusion.}
		First, we take the limit $N\rightarrow \infty$.
		Namely we  put our two estimates together to find that for all $ n  \geq 1$ there holds 
		\begin{align}
		\limsup_{N \rightarrow \infty }
		\big| 
		\langle \pi_s   \bF _N (t), \vp_s    \rangle  -  
		\< \pi_s   \bF(t), \vp_s    \> 
		\big| 
		& \  \leq  \ 
		\limsup_{N \rightarrow \infty }\calS_{N,n}(t) 
		\    +  \  
		\limsup_{N \rightarrow \infty }\calI_{N,n}(t) \ ,  \\
		&  \ \leq  \ 
	 	s    (m T T_*^{-1} )^{n+1}                      
		[ e   (n+1) ]^{s/m }  
		e^s 
		\big(
		\| \bF  \|_{\bmu}
		+
		\sup_{  N \in \N }
		\| \bF_N  \|_{ \bmu }
		\big) \ .           \nonumber 
		\end{align} 
		Thanks to Proposition \ref{corollary 1}, one has    that
		$  
		\| \bF_N  \|_{ \bmu }
		\leq 
		(1 - \theta_2)^{-1} 
		\| 
		F_{N,0}
		\|_{0 } $
		for all $ N \geq 1$. 
		Thus, 
		$
		\sup_{  N \in \N } 
		\| \bF_N  \|_{  \bmu } <\infty 
		$
		due to our assumptions on the initial data. 
		The conclusion of the theorem now follows after we take the $ n \rightarrow \infty$ limit. 
	\end{proof}

	\section{Proof of Theorems \ref{thm bbgky to boltzmann} and \ref{thm prop of chaos}}
	\label{section proof derivation}
	Throughout this section,    $f_{N}  $ denotes the solution of the Master equation \eqref{master eq}, and  $(f_N^{(s)})_{s\in\N}$ denotes   its sequence of marginals, defined in \eqref{marginals}. We recall that these quantities are symmetric with respect to the permutation of their variables. 
	
	\subsection{Calculation of BBGKY}
	In what follows, we fix the number of particles $N \geq M$ and some order  $ s  \leq N $ of the marginals. We start with the following calculation
	\begin{align}
	\label{calculation eq 1}
	\partial_t f_N^{  (s) }   =   \partial_t \tr_{s+1, \ldots, N}  \big(  f_{N}   \big)  = \tr_{s+1, \ldots, N}  \big( \partial_t f_N    \big) = \tr_{s+1, \ldots, N}  \big(  \Omega f_{N} \big)
	\end{align}
	where we recall that $\Omega$ is the linear operator introduced in Eq.  \eqref{master eq}.
	Hence, due to   \eqref{calculation eq 1},  linearity of the trace map, and the definition of $\Omega$ it follows that 
	\begin{align}
	\label{derivation eq 1}
	\partial_t f_N^{(s)}  
	= 
	\sum_{K  = 1}^M   
	\beta_K
	\frac{N }{ K! \binom{N}{K}}
	\sum_{   i_1  \cdots i_K  }
	\tr_{s+1, \ldots, N}
	\big(  \Omega_{ i_1  \cdots  i_K } f_{N}  \big)
	\end{align}
	where for each $   K  = 1 , \ldots \, , M $ and $ ( i_1 , \cdots , i_K )  \in \calI(K)$, defined in \eqref{index set}, we have introduced the operator 
	\begin{align}
	\label{omega indices}
	\Omega_{ i_1 \cdots  i_K } f  
	= 
	\int_{\S_K   }
	\big( 
	\, f \circ     T^\omega_{  i_1  \cdots   i_K			  }    - f  \, 
	\big)  
	\, \d b_K(\omega )
	\, , \quad   f  \in 
	L^1(\R^{dN } )
	\ . 
	\end{align}
	Thus, it remains to calculate  the quantity  $\tr_{s+1, \ldots, N}  (\Omega_{ i_1 \cdots i_K  } f) $ for arbitrary  $ (   i_1 ,\ldots ,i_K ) \in \calI(K) $ 
	and
	$f \in  L^1_{\mathrm{sym}}
	(\R^{dN } )$. 
	
	\vspace{1mm}

	The first step in this direction is exploiting the symmetric condition given in  \eqref{symmetry condition} given in $\hhh$. 
	This is the content of the following lemma. Recall that $S_K$ stands for the group of permutations of $K$ elements.  
	
	\vspace{1mm}
	
	\begin{lemma}\label{lemma combinatorics}
		For all $K = 1  , \ldots, M $, $(i_1, \ldots, i_K) \in \calI(K)$ and $\gamma \in S_K$ it holds that 
		\begin{equation}\label{lemma combinatorics eq 1}
		\Omega_{   i_{\gamma(1)} \cdots i_{ \gamma(K )}    }   = 
		\Omega_{i_1 \cdots i_K } \ . 
		\end{equation}
	\end{lemma}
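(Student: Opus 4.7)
The plan is to reduce the claim to a direct application of the symmetric hypothesis \hhh\ by showing that swapping the order of the indices $(i_1,\ldots,i_K)$ to $(i_{\gamma(1)},\ldots,i_{\gamma(K)})$ inside $T^\omega_{i_1\cdots i_K}$ is equivalent to conjugating the core map $T^\omega_K$ by the permutation $\gamma \in S_K$, and that this conjugation is invisible after integration against $db_K(\omega)$.

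First I would pick some $\sigma\in S_N$ with $\sigma(j)=i_j$ for $j=1,\ldots,K$, so by definition \eqref{T extension} we have $T^\omega_{i_1\cdots i_K}=\sigma\circ(T_K^\omega\times\mathrm{id}_{\R^{d(N-K)}})\circ\sigma^{-1}$. Let $\tilde\gamma\in S_N$ be the extension of $\gamma$ by the identity on $\{K+1,\ldots,N\}$; then $\sigma':=\sigma\circ\tilde\gamma$ satisfies $\sigma'(j)=i_{\gamma(j)}$ and
\[
T^\omega_{i_{\gamma(1)}\cdots i_{\gamma(K)}}
= \sigma\circ\bigl((\gamma\circ T_K^\omega\circ\gamma^{-1})\times\mathrm{id}_{\R^{d(N-K)}}\bigr)\circ\sigma^{-1},
\]
since $\tilde\gamma\circ(T_K^\omega\times\mathrm{id})\circ\tilde\gamma^{-1}$ acts as $\gamma\circ T_K^\omega\circ\gamma^{-1}$ on the first block and trivially on the second.

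Next, for any $f\in L^1(\R^{dN})$ and $V\in\R^{dN}$, write $\sigma^{-1}V=(W,U)$ with $W\in\R^{dK}$, $U\in\R^{d(N-K)}$, and define the test function $\varphi(Y):=f\bigl(\sigma(Y,U)\bigr)$ for $Y\in\R^{dK}$. Then
\[
\int_{\S_K} f\circ T^\omega_{i_{\gamma(1)}\cdots i_{\gamma(K)}}(V)\,\d b_K(\omega)
=\int_{\S_K}\varphi\bigl[(\gamma\circ T_K^\omega\circ\gamma^{-1})W\bigr]\,\d b_K(\omega),
\]
and the right-hand side equals $\int_{\S_K}\varphi[T_K^\omega W]\,\d b_K(\omega)=\int_{\S_K}f\circ T^\omega_{i_1\cdots i_K}(V)\,\d b_K(\omega)$ by \hhh. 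Subtracting the $f(V)$ terms (which are independent of $\omega$) yields \eqref{lemma combinatorics eq 1}.

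The only mildly technical point is bookkeeping with the two permutations $\sigma$ and $\tilde\gamma$ and checking that the conjugation identity on $\R^{dN}$ factorizes correctly over the product structure $\R^{dK}\times\R^{d(N-K)}$; once that is in place, \hhh\ does all the work and no estimate on $f$ or regularity of $db_K$ is needed.
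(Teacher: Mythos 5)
Your proof is correct and follows essentially the same route as the paper: both reduce the claim to \hhh{} by writing $T^\omega_{i_{\gamma(1)}\cdots i_{\gamma(K)}}=\sigma\circ\bigl((\gamma\circ T^\omega_K\circ\gamma^{-1})\times\mathrm{id}_{\R^{d(N-K)}}\bigr)\circ\sigma^{-1}$ and integrating against $\d b_K(\omega)$ (the paper merely splits this into two steps, first handling $(i_1,\ldots,i_K)=(1,\ldots,K)$ and then conjugating by $\sigma$). The one point you gloss over is that \hhh{} is stated for \emph{continuous} test functions, so your $\varphi(Y)=f(\sigma(Y,U))$ is not directly admissible for $f\in L^1$; one should first argue for $f\in L^1_{\mathrm{sym}}\cap C$ and conclude by density and boundedness of $\Omega_{i_1\cdots i_K}$, as the paper does.
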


\begin{proof}
 We divide the proof into two steps. In the first one, we assume that the collection of indices is a permutation of 
 the first $K$ indices: $ \{ 1,\ldots, K \}.$   In the second step, we show how the general case follows from the particular one.

 \vspace{1mm}
 
 \textit{Step One.} 
 Let $\gamma \in S_K$  be any permutation of the elements $ \{ 1,\ldots, K \}$, and denote by $\Gamma = \gamma \times id_{N-K}$
 its natural extension to $S_N$. 
 Let $f \in L^1_{\textnormal{sym}} (\R^{dN}) \cap C(\R^{dN})$
 and denote by $\Omega^+ = \Omega + { id }  $ 
 the \textit{gain term} of Eq. \eqref{omega indices}.
 Then, we calculate that for all $V \in \R^{dN}$
 \begin{align}
[ \Omega^+_{1 \cdots K} f ] (V)
 & =
\int_{\S_{K  }}
f \big[ 
T^\omega_{1 \cdots K} 
V
\big] 
\, 
d b_K(\omega ) \\
& 
= \int_{\S_{K  }}
f \big[
T^\omega_K (v_1, \ldots, v_K) ; 
v_{K+1} , \ldots, v_N
\big]
\, 
d b_K(\omega ) \\
 & = \int_{\S_{K  }}                    \label{lemma 5.4 eq 1}
f 
\Big[ 
\big( 
  \gamma^{-1} \circ  T^\omega_K \circ    \gamma 
\big) 
(v_1, \ldots, v_K) ; 
v_{K+1} , \ldots, v_N
\Big]
\, 
d b_K(\omega ) \\
& = \int_{\S_{K  }}
f \Big[
 \big(
  \Gamma^{-1 } \circ (T^\omega_K \times id_{\R^{d(N-K)}} ) \circ \Gamma \big)
   \, V 
\Big] 
\, 
d b_K(\omega ) \\
& = \int_{\S_{K  }}
f \big[
T^\omega_{\gamma(1) \cdots \gamma(K) }
V 
\big]
\, 
d b_K(\omega ) \\
& = 
[ \Omega^+_{ \gamma ( 1 )  \cdots \gamma ( K ) } f ] (V)
 \end{align}
 where   we have used $\hhh$ to obtain \eqref{lemma 5.4 eq 1}. 
 Since $L^1_{\textnormal{sym}} \cap C$ is a dense subspace of $L^1_{\textnormal{sym}}$, this finishes the proof of the first step.

 \vspace{1mm}
 
 \textit{Step Two.} Let now $(i_1 , \ldots, i_K) \in \calI (K)$ be arbitrary, and consider $\gamma \in S_K$   and $\Gamma \in S_N$ 
 as in Step One. 
 First, we make a general observation: 
 for all   $\sigma \in S_N$, $f \in L^1_{\textnormal{sym}} (\R^{dN })$ and $ V \in \R^{dN}$    the following   identity holds
 for the associated gain term
 \begin{equation}
 \label{general obs}
  [  \Omega_{ \sigma (1) \cdots   \sigma (K) }^+ f    ] (V)
   = 
 \int_{\S_{K }}
 f   \big[ 
 \big(      \sigma  \circ   (T_K^\omega \times id_{ \R^{d ( N -  K ) } } )    \circ     \sigma^{-1 }     \big)  
 \,  V 
 \big] 
 d b_K(\omega )    
   = 
 [ \Omega_{1 \cdots K }^+ (f \circ    \sigma )  ]  (   \sigma^{-1 }   V)    \ . 
 \end{equation}
Consequently, the same identity holds for the full operator as well. 
Now, we choose $\sigma$ such that $\sigma(1) = i_1,  \,    \ldots, \,   \sigma(K) =i_K $. 
Then, Step One and the general observation imply that
\begin{align}
 [  \Omega_{i_1 \cdots i_K } f    ] (V)
& = 
 [ \Omega_{1 \cdots K } (f \circ    \sigma )  ]  (   \sigma^{-1 }   V)     \\
 & = 
  [ \Omega_{ \gamma ( 1 )  \cdots  \gamma ( K )  } (f \circ    \sigma )  ]  (   \sigma^{-1 }   V)     \\
  &= 
    [ \Omega_{   1   \cdots    K } (f \circ    \sigma  \circ \Gamma )  ]   \big(     (  \Gamma^{-1 } \circ \sigma^{-1 } ) \,   V  \big)      \\
& = 
 [  \Omega_{ \sigma (\gamma(1)) \cdots  \sigma ( \gamma (K))} f    ] (V) \ .
\end{align}
Since $ \sigma ( \gamma (\ell)) = i_{\gamma (\ell) }$ for all $\ell \in \{ 1 , \ldots , K\}$, the proof is complete.
 	\end{proof}

	We apply Lemma \ref{lemma combinatorics}   in order to get a simplified expression of $\Omega$. 
	More precisely, we obtain that for all $ K = 1 , \ldots , M$ it holds that 
	\begin{equation}
	\sum_{  i_1 \cdots i_K  }
	\Omega_{ i_1   \cdots    i_K   }  
	\ =  \! \! 
	\sum_{   i_1 < \cdots  <i_K  }
	\sum_{\mu \in S_K }
	\Omega_{   i_{\mu(1)} \cdots  i_{ \mu(K )}    }   
	\ = \ 
	K!  \! \!   \sum_{    i_1 < \cdots  <i_K   }
	\Omega_{   i_{ 1 } \cdots  i_{ K }    }    \ . 
	\end{equation}
	Consequently, we may plug this back in Eq. \eqref{derivation eq 1} to conclude that 
	\begin{equation}\label{eq 1}
	\partial_t f_N^{ ( s)  }   =  \sum_{K  = 1 }^M   \beta_K  \frac{N}{ \binom{N}{K}} 
	\sum_{      i_1 < \cdots <  i_K   } 
	\tr_{s+1, \ldots, N}  \big(   \Omega_{   i_1 \cdots i_K   }    f_{N}    \big)   \ . 
	\end{equation}
	Thus, it suffices to calculate $\tr_{s+1, \ldots, N}  (\Omega_{ i_1 \cdots i_K  } f) $ only for    ordered indices $   i_1  < \cdots < i_K  $ and symmetric functions $f \in L^1_{\mathrm{sym}} (\R^{dN }) $.
	The following family of operators  is defined with that purpose.

	\vspace{2mm}
	
	\begin{definition}
	\label{definition 5}
		Let $ K   = 1, \ldots,  M , \ $ $n =  1 , \ldots, K   $     and   denote      $r \equiv  K - n $.
		For all indices  
		$1 \leq i_1 < \cdots <  i_n    \leq  s  $ we define the operator
		$$ 
		C^{ s,K,n}_{i_1   \cdots i_n  } : 
		L_{\textnormal{sym}}^1( \R^{d (s  +  r  )}  ) \rightarrow L_{\textnormal{sym}}^1( \R^{ d s    }  )
		$$
		as follows:
		\begin{enumerate}
			\item 
			For $r = 0 $ we set 
			$C^{s,K,n}_{i_1 \cdots i_n  }    := \Omega_{   i_{ 1 } \cdots  i_{ K }    }$ . 
			\item For $ r \geq 1 $  and  $s + r \leq N $ and    we set 
			\begin{align}
			C^{s,K,n}_{     i_1 \cdots i_n  }  
			f^{(s + r )} (V_{s })
			: =
			\int_{\S_K \times \R^{dr} }  
			\big(
			f^{(s + r )}
			(
			 V^{*i_1 \cdots * i_n }_{s + r } &     
			)
			-
			f^{(s + r)}
			( V_{s + r } ) 
			\big) 	    	\d b_K(\omega ) 
			d v_{s + 1}
			\cdots 
			d v_{s + r } 		\nonumber 
			\end{align}
			where $V_s \in \R^{ds}$ ,   $V_{s + r }  \equiv (V_s, v_{s+1} , \ldots, v_{s+r}) \in \R^{d(s+r)}$ and
			\begin{equation}
			    V_{s+r}^{*i_1 \cdots * i_n } 
			    : = 
			    ( v_1 ,  \, \ldots \, , v_{i_1}^* ,  \, \ldots \, , v_{i_n}^*,  \, \ldots \, , v_{s }  \, ; \, v_{s+1}^* , \,  \ldots \, , v_{s+r}^* ) \in \R^{d(s+r)}
			\end{equation}
			with $ (  v_{i_1}^* , \ldots, v_{i_n}^*,   v_{s+1}^* , \ldots, v_{s+r}^*  ) = T^\omega_K  
			(v_{i_1} , \ldots, v_{i_n},  v_{s+1} , \ldots, v_{s+r} )  \in \R^{dK}$
			\vspace{1mm}
			
			\item For $ r \geq 1 $ and  $s +r > N$ we set $C^{s,K,n}_{i_1   \cdots i_n  } \equiv 0 $. 
		\end{enumerate}
	\end{definition}

\begin{remarks}
	A few comments are in order. 
	\begin{remarklist}
		\item 
		    It can be helpful to keep in mind that $s$ is the order of the marginal and $r$ is the number of interacting particles that get traced over by the operator $C^{s,K,n}_{i_1   \cdots i_n  }$.
		\item 
		$C^{s,K,n}_{i_1  \cdots i_n  }  $ is bounded with operator norm $ \| C^{s,K,n}_{i_1 \cdots i_n  }   \| \leq 2$. 
	\end{remarklist}
\end{remarks}

%	\begin{remark}
%	    It can be helpful to keep in mind that $s$ is the order of the marginal and $r$ is the number of interacting particles that get traced over by the operator $C^{s,K,n}_{i_1   \cdots i_n  }$.
%	\end{remark}
%	
%	\begin{remark}
%		$C^{s,K,n}_{i_1  \cdots i_n  }  $ is bounded with operator norm $ \| C^{s,K,n}_{i_1 \cdots i_n  }   \| \leq 2$. 
%	\end{remark}

	The following lemma is the main result concerning the operators just introduced.

	\begin{lemma}\label{prop identity}
		Let $ K   = 1, \ldots,  M , \ $ $n =  	1 , \ldots, K   $  and let $r \equiv K - n  $. 
		Assume that $ s + r \leq N $ and  consider $ K $  ordered indices such that 
		\begin{equation}
		1 \leq i_{1} < \cdots  < i_{n} \leq s < i_{{n+ 1 }} < \cdots < i_{n+r} =  i_{K} \leq N \, .
		\end{equation}
		Then, for all $ f \in  L^1_{\mathrm{sym}} (  \R^{dN})$ the following identity holds 
		\begin{equation}\label{lemma5  eq 1}
		\tr_{s+1, \ldots, N}  \big(\Omega_{ i_1 \cdots i_K } f   \big)  
		=
		C^{s,K,n}_{ i_{1} \cdots   i_{n }} [   \tr_{ s + r+1, \ldots, N } f  ] \, .
		\end{equation}
	\end{lemma}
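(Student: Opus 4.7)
My plan is to establish the identity by separating $\Omega_{i_1 \cdots i_K} = \Omega^+_{i_1\cdots i_K} - \mathrm{id}$ into gain and loss parts and matching each side. The case $r=0$ is immediate, since Definition \ref{definition 5} sets $C^{s,K,K}_{i_1\cdots i_K} = \Omega_{i_1\cdots i_K}$ and the two trace operators $\tr_{s+1,\ldots,N}$ and $\tr_{s+r+1,\ldots,N}$ coincide. For the loss term in the case $r \geq 1$, the fact that $b_K$ is a probability measure and that the integrand is independent of $\omega$ yields $-\tr_{s+1,\ldots,N} f$ on the left, while the loss contribution in $C^{s,K,n}_{i_1\cdots i_n}[\tr_{s+r+1,\ldots,N} f]$ consists of the $\omega$- and $(v_{s+1},\ldots,v_{s+r})$-integration of $\tr_{s+r+1,\ldots,N} f$, which by Fubini and the probability normalization of $b_K$ equals $-\tr_{s+1,\ldots,N} f$ as well.

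The heart of the proof is the gain term, for which I claim
\begin{equation*}
\int_{\R^{d(N-s)}}\!\! \int_{\S_K} f\bigl(T^\omega_{i_1\cdots i_K}(V_s, v_{s+1}, \ldots, v_N)\bigr)\, \d b_K(\omega)\, \d v_{s+1}\cdots \d v_N = \int_{\R^{d(N-s)}}\!\!\int_{\S_K} f\bigl(T^\omega_{i_1\cdots i_n,\, s+1,\ldots, s+r}(V_s, v_{s+1}, \ldots, v_N)\bigr)\, \d b_K(\omega)\, \d v_{s+1}\cdots \d v_N.
\end{equation*}
To see this, I will fix a permutation $\sigma \in S_N$ that leaves $\{1,\ldots,s\}$ pointwise invariant and satisfies $\sigma(i_{n+j}) = s+j$ for $j=1,\ldots,r$; such $\sigma$ exists because $\{i_{n+1},\ldots,i_K\}$ and $\{s+1,\ldots,s+r\}$ are both $r$-element subsets of $\{s+1,\ldots,N\}$. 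Renaming the dummy variables via $\sigma$ (which preserves the Lebesgue integral) and invoking the permutation-invariance of $f \in L^1_{\mathrm{sym}}(\R^{dN})$, a short multiset comparison shows that the two integrands yield $f$ evaluated on two vectors whose components coincide as multisets; in spirit this mirrors the argument from Lemma \ref{lemma combinatorics}, but uses the freedom to relabel the variables being integrated.

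Once the relabeling is in hand, the transformation $T^\omega_{i_1\cdots i_n,\, s+1,\ldots,s+r}$ acts as the identity on the coordinates at positions $s+r+1,\ldots,N$. Fubini then allows me to integrate first in $v_{s+r+1},\ldots, v_N$, which by definition produces $\tr_{s+r+1,\ldots,N} f$ evaluated at $V_{s+r}^{*i_1\cdots *i_n}$; the remaining $\d b_K(\omega)\, \d v_{s+1}\cdots \d v_{s+r}$ integration is exactly the gain part of $C^{s,K,n}_{i_1\cdots i_n}[\tr_{s+r+1,\ldots,N} f](V_s)$ from Definition \ref{definition 5}. Combining with the loss term computed above yields \eqref{lemma5 eq 1}.

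The main obstacle is the relabeling step: since $T^\omega_{i_1\cdots i_K}$ treats the interacting coordinates in a way that couples the $v_{i_{n+j}}$ (which are being integrated) with the $v_{i_\ell}$ (which are part of $V_s$), one must carefully track how the change of dummy variables $\sigma$ distributes over the positions modified by the transformation versus those left inert. Once this bookkeeping is in place, the rest of the proof is a straightforward application of Fubini's theorem and the definitions.
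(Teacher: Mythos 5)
Your proposal is correct and follows essentially the same route as the paper: the $r=0$ case is immediate, the loss term reduces to $-\tr_{s+1,\ldots,N}f$ on both sides via the normalization of $b_K$, and the gain term is handled by relabeling the dummy integration variables with a permutation fixing $\{1,\ldots,s\}$ and sending $i_{n+j}\mapsto s+j$, using $f=f\circ\bar\sigma$ and the conjugation identity $\bar\sigma\circ T^{\omega}_{i_1\cdots i_K}\circ\bar\sigma^{-1}=T^{\omega}_{\bar\sigma(i_1)\cdots\bar\sigma(i_K)}$, followed by Fubini to integrate out the inert coordinates $v_{s+r+1},\ldots,v_N$ first. The "bookkeeping" you flag as the main obstacle is resolved exactly by that conjugation identity, which is how the paper carries out the same step.
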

	
	\vspace{1mm}
	\begin{remark}
		 The main consequence  of the previous result is that 
		the left hand side  of Eq. \eqref{lemma5  eq 1}  is independent  of the last $r$ indices      
		$( i_{n+1}, \ldots\, , i_{n+r }) $. 
	\end{remark}

 	\begin{proof}
 		Since $s+r \leq N $, there are two cases.
 	
 	\noindent \textit{(i)}. 		
 	Let $r=0$.  Then,  $i_1 < \cdots < i_K \leq s$. In particular,    all of the particles that are being traced out are not interacting.  	Consequently, it is easy to show that
 			\begin{equation}
\tr_{s+1, \ldots, N}  \big(  \Omega_{ i_1 \cdots i_K } f   \big)  
 			=
 			\Omega_{ i_1 \cdots i_K }  [   \tr_{ s + 1, \ldots, N } f  ]
 			=
 			C^{s,K,n}_{i_1 \cdots i_n  }  
 			[   \tr_{ s + 1, \ldots, N } f  ] \ .
 			\end{equation}
 			\vspace{1mm}
 			
\noindent \textit{(ii)}.  			Let $r \geq 1$. 
 			Let  $f \in   L^1_{\mathrm{sym}} (  \R^{dN})$ and fix $V_s \in \R^{ds}$. 
 			Let $\mu$ be any permutation of the elements $\{  s+1, \ldots, N\}$. 
 			Then, we may implement the change of variables 
 			$(v_{s+1}, \ldots, v_N)  \mapsto 
 			(
 			v_{\mu^{-1} (s+1)  } , \ldots, v_{ \mu^{-1}(N) }
 			)$
 			in the following expression 
 			\begin{align}
 			\label{lemma 6.2 eq 1}
 			\int_{\R^{d(N-s )}}
 			f \big[ T^\omega_{i_1 \cdots i_K}
 			 (
 			 V_s ; v_{s+1} , \ldots, v_N 
 			 &
 			 ) 			\nonumber 
 			  \big]
 		  	\d v_{s+1} \ldots, \d v_{ N } \\ 
 		 & 	=
 			\int_{\R^{d(N-s )}}			\nonumber 
 			f 
 			\big[
 		  T^\omega_{i_1 \cdots i_K}
 			  (V_s ; v_{\mu^{-1} (s+1)  } , \ldots, v_{ \mu^{-1}(N) })
 			   \big]
 			\d v_{s+1} \ldots, \d v_{ N }   \\
 			& = 
 				\int_{\R^{d(N-s )}}
 			f \big[ 
 		 \big( 
 		 	T^\omega_{i_1 \cdots i_K}  \circ  \, (  id_{s}  \! \times \!   \mu^{-1} ) \,   \big)
 			(
V_s ; v_{s+1} , \ldots, v_N 			
 			)
 			 \big]
 			\d v_{s+1} \ldots, \d v_{ N }   
 			\end{align}
 			where we recall that we identify $id_{s} \times  \mu^{-1} $ with its group action over $\R^{dN}$, 
 			i.e.   we write
 		  $$
 		  (id_{s} \times  \mu^{-1} )(V_s ; v_{s+1} , \ldots, v_N 			) 
 		  =
 		  (V_s ; v_{\mu^{-1} (s+1)  } , \ldots, v_{ \mu^{-1}(N) }) . 
 		  $$
 		  Next, since $f \in L^1_{\mathrm{sym}} $, there holds $f = f \circ \bar \mu $,
 		  where we denote $\bar \mu  \equiv  id_s  \! \times \!  \mu  \in S_N$. 
 			Therefore, denoting $V \equiv  (V_s, v_{s+1}  , \ldots, v_N )$, 
 			we obtain thanks to \eqref{lemma 6.2 eq 1} and permutational symmetry that
 			\begin{align}
 				\int_{\R^{d(N-s )}}
 			f \big[ T^\omega_{i_1 \cdots i_K}
 		V 
 			\big]
 		 	\d v_{s+1} \ldots, \d v_{ N } 
 		  & 	=
 		 		\int_{\R^{d(N-s )}}
 		 	f \big[ (  \bar \mu \circ T^\omega_{i_1 \cdots i_K} \circ \bar \mu^{-1}   	) \, V  
 		 	\big]				\nonumber 
 		 	\d v_{s+1} \ldots, \d v_{ N }  
 		 	\\
 		 	 & = 
 		 		\int_{\R^{d(N-s )}}
 		 	f \big[  T^\omega_{ \bar \mu (i_1)  \cdots  \bar \mu (i_K ) }   V  
 		 	\big]
 		 	\d v_{s+1} \ldots, \d v_{ N }  
 			\end{align}
where the last line follows from the definition of $T_{i_1 \ldots i_K}^\omega$ (see \eqref{T extension}) 
upon conjugation with $\bar \mu $. 
Since $1 \leq i_1  < \cdots < i_n \leq s, $ we must have $\bar \mu (i_\ell) = i_\ell$ for $ 1 \leq \ell \leq n$. 
 	Further, since   $s +1   \leq  i_{n+1} < \cdots < i_{n + r }   \leq N $, 
 	we may choose $\mu$ such that $\mu (i_{n+1}) = s+1 ,  \ldots,  \mu(i_{n+r}) = \mu (i_K) = s+r$.
 	Consequently, we find
 	\begin{align}
 		\int_{ \R^{d(N-s )}}
 f \big[ 
 T^\omega_{i_1 \cdots i_K}
 V 
 \big]
 \d v_{s+1} \ldots, \d v_{ N } 
 & 	=
 \int_{\R^{d(N-s )}}
 f \big[  
 T^\omega_{ i_1 \ldots i_n , s+1 \cdots s +r  } V
 \big]			  
 \d v_{s+1} \ldots, \d v_{ N }  
 	\end{align}
Next, using the notation introduced in Definition \ref{definition 5}, 
we   are able to write  
\begin{equation}
    T^\omega_{ i_1 \ldots i_n , s+1 \cdots s +r  } V = (V_{s+r}^{*i_1 \cdots * i_n } , v_{s+r +1 } , \ldots, v_N) \ . 
\end{equation} 
Hence, we may use Fubini's theorem over the space 
$\R^{d(N-s ) }  = \R^{dr} \times \R^{d (N -s -r )}$ 
to find that
 			\begin{align}
 		\int_{ \R^{d(N-s )}}
 		f \big[ 
 		T^\omega_{i_1 \cdots i_K}
 		V \nonumber
 		\big]
 	 & 	\d v_{s+1} \ldots, \d v_{ N }  \\ 
 		& 	=
 		\int_{\R^{d(N-s )}}
 		f \big[  
 	 V_{s+r}^{*i_1 \cdots*  i_n }  , v_{s+r +1  } , \ldots, v_{N}
 		\big]				\nonumber 
 		\d v_{s+1} \ldots, \d v_{ N }   \\ 
 		&  = 
 		\int_{\R^{ dr }}
 		\Big(
 		\int_{\R^{d (N - r - s )}}
 			f \big[  
 		V_{s+r}^{*i_1 \cdots * i_n }  , v_{s+r +1  } , \ldots, v_{N}
 		\big]		\,
 		\d v_{s+ r +1 } \cdots \d v_{N}
 		\Big) 
 		\, \nonumber
 		\d v_{s +1 }
 		\cdots
 		\d v_{s + r } \\ 
 		& = 								 			\label{eq 1 lemma 6.2}
 		 		\int_{\R^{ dr }}
 		 		f^{(s+ r )} [V_{s+r}^{*i_1 \cdots*  i_n } ]
 		 			\, 
 		 		\d v_{s +1 }
 		 		\cdots
 		 		\d v_{s + r } 
 		\end{align}
 		where, in order to obtain the  last line, we have used the definition of the marginals introduced in Section \ref{section main results}. 
 	Similarly,  one can prove  that 
 		$\int_{\R^{d (N  - s)}}   f  [  V ] \d v_{s+1} \ldots \d v_{N}
 		=
 		\int_{\R^{dr}}
 		f^{(s+r)}
 		[V_{s+r }]
 		\d v_{s+1 } \cdots \d v_{ s+ r } . 
 		$
 		We subtract these two identities  and integrate against $\d b_K (\omega) $ to prove our claim. 
 		 	\end{proof}

\begin{definition}
\label{definition 6}
    For $N\geq M$, $1 \leq s \leq N$ and $ 0 \leq k \leq m $ we define the linear operator 
    $$
    \c_{s,  s + k }^N 
    : L^1_{\mathrm{sym}}  (\R^{d(s+k)}) \rightarrow L^1_{\mathrm{sym}}  (\R^{ds})
    $$ 
    according to the formula 
    	\begin{equation}\label{C^N formula}
	    \c^N_{s,s+ k } :=
	    \sum_{n=1}^{M-k} \beta_{ k +n} \frac{N}{\binom{N}{ k +n}} \binom{N-s}{ k }
	    \sum_{1\leq i_1 < \cdots <i_n \leq s} C_{i_1,\cdots,i_n}^{s, k +n,n}
	    \ . 
	\end{equation}
\end{definition}

\begin{remark}
\label{finite remark}
	It is straightforward to verify that, for each $N \in \N$, there is only finitely many  operators 
	that are non-zero. In particular, $\c_{s,  s + k }^N = 0 $ for any $s\in \N$ satisfying $s + k > N$. 
\end{remark}

	We are now ready to record the BBGKY hierarchy.

	\begin{lemma}\label{prop hierarchy}
	For all $N\geq M$ and $1 \leq s \leq N$.
	Let  $f_{N} $ denote the solution of the Master equation \eqref{master eq}, and let  $(f_N^{(s)})_{s\in\N}$ be its   its sequence of marginals, defined in \eqref{marginals}. Then, 
	it holds that  
		\begin{equation}
		\partial_t f_N^{(s)}
		= \sum_{  k   = 0  }^m
		\c_{s,s+k}^N 
		f_N^{(s+k)} \ . 
		\end{equation}
	\end{lemma}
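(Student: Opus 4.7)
The plan is to start from the identity \eqref{eq 1} already obtained from the Master equation:
\begin{equation*}
\partial_t f_N^{(s)} = \sum_{K=1}^{M} \beta_K \frac{N}{\binom{N}{K}} \sum_{i_1 < \cdots < i_K} \tr_{s+1, \ldots, N}\big(\Omega_{i_1 \cdots i_K} f_N\big),
\end{equation*}
and, for each collision order $K$, to partition the ordered $K$-tuples $(i_1 < \cdots < i_K) \subseteq \{1,\ldots,N\}$ according to the number $n \in \{0, 1, \ldots, K\}$ of indices lying in $\{1, \ldots, s\}$. Writing $r := K - n$, an $n$-labelled ordered tuple then takes the form
\begin{equation*}
1 \leq i_1 < \cdots < i_n \leq s < i_{n+1} < \cdots < i_K \leq N.
\end{equation*}

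The extreme case $n=0$ contributes nothing: all interacting indices are above $s$, so the transformation $T^\omega_{i_1\cdots i_K}$ acts purely on integration variables of $\tr_{s+1,\ldots,N}$; by \h\ the map $T^\omega_K$ is a linear isometry, so a change of variables equates the gain and loss contributions inside $\Omega_{i_1\cdots i_K}$. For each $n \geq 1$, I would apply Lemma \ref{prop identity} to obtain
\begin{equation*}
\tr_{s+1, \ldots, N}\big(\Omega_{i_1 \cdots i_K} f_N\big) = C^{s,K,n}_{i_1 \cdots i_n}\!\big[\tr_{s+r+1,\ldots,N}(f_N)\big] = C^{s,K,n}_{i_1 \cdots i_n} f_N^{(s+r)}.
\end{equation*}
The key feature of this identity is that the right-hand side is independent of the upper indices $(i_{n+1}, \ldots, i_K)$. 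Therefore, for each fixed lower configuration $i_1 < \cdots < i_n \leq s$, the sum over admissible $(i_{n+1}, \ldots, i_K) \subseteq \{s+1,\ldots,N\}$ simply yields the combinatorial multiplicity $\binom{N-s}{r}$.

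Finally, I would reindex with $k := r = K-n$, so that $K = k+n$ and the double sum $\sum_{K=1}^{M}\sum_{n=1}^{K}$ becomes $\sum_{k=0}^{m}\sum_{n=1}^{M-k}$ (recall $m = M-1$). Gathering the factor $\beta_{k+n}\,N\,\binom{N-s}{k}/\binom{N}{k+n}$ in front of the inner sum $\sum_{1 \leq i_1 < \cdots < i_n \leq s} C^{s,k+n,n}_{i_1 \cdots i_n}$ yields precisely the operator $\c^N_{s,s+k}$ of \eqref{C^N formula} acting on $f_N^{(s+k)}$, which is the claimed BBGKY hierarchy; the edge case $s+k > N$ is consistent because Definition \ref{definition 5}(3) sets the relevant operators to zero, and the marginals $f_N^{(s+k)}$ also vanish in that regime. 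The only real obstacle is the combinatorial bookkeeping involved in partitioning the ordered indices and performing the reindexing cleanly; all of the substantive analytic content has already been absorbed into Lemma \ref{prop identity} (and hence, upstream, into Hypothesis \hhh).
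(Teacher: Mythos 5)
Your proposal is correct and follows essentially the same route as the paper's proof: decompose the ordered index sum in \eqref{eq 1} by the number $n$ of indices lying in $\{1,\ldots,s\}$, discard the $n=0$ contribution, apply Lemma \ref{prop identity} for $n\geq 1$, sum out the upper indices to produce the factor $\binom{N-s}{K-n}$, and reindex with $k=K-n$ to recover the operators of \eqref{C^N formula}. Your explicit isometry/change-of-variables justification for the vanishing of the $n=0$ term, and your handling of the $s+k>N$ edge case via Definition \ref{definition 5}(3), match the paper's corresponding remarks.
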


	\begin{proof}
		First, following the same argument of the proof of Lemma \ref{prop identity}, we may verify that  for 
		$ s < i_1 < \cdots < i_K$ it holds that 
		\begin{equation}
		\tr_{s+1, \ldots, N}  \big(\Omega_{ i_1 \cdots i_K } f   \big) 
		= 0  \ . 
		\end{equation}
	Next, we use the following decomposition of the set of ordered indices
		\begin{equation}
		\sum_{ i_1 < \cdots  < i_K }
		\tr_{s+1, \ldots, N}  \big(\Omega_{ i_1 \cdots i_K } f   \big) 
		=
		\
		\sum_{n=1}^K
		\
		\sum_{    \substack{  i_1 < \cdots  <   i_K  \\   i_n \leq s < i_{n+1} }  } 
		\tr_{s+1, \ldots, N}  \big(\Omega_{ i_1 \cdots i_K } f   \big) 
		\ . 
		\end{equation}
		where for notational convenience we denote $i_{K +1} =N +1$. 
		In other words, $n$   counts the number of the indices $\{  i_\ell  \}$ that are less than or equal to $s $. 
		In addition, we note that 
		\begin{equation}\label{ineq 1}
		s < i_{n + 1 } < \cdots <  i_{ K } \leq N 
		\quad \implies \quad 
		N \geq s + (K - n) \ . 
		\end{equation}
		We implement Eq. \eqref{ineq 1} by means of a characteristic function $ \1_{K, n} \equiv  \mathds{1} ( N \geq s + K - n)$. 
		Thus, we may write thanks to Eq. \eqref{eq 1} and Lemma  \ref{prop identity}
		\begin{align}
		\partial_t f_N^{ ( s )  }   											\nonumber 
		 & \ =  \  \sum_{K  = 1 }^M  
		\beta_K  \frac{N}{ \binom{N}{K}} 
		\sum_{n = 1 }^K 
		\,  \1_{K, n} \! 
		\sum_{    \substack{  i_1 < \cdots  <   i_K  \\   i_n \leq s < i_{n+1} }  } 
		\tr_{s+1, \ldots, N}  \big(   \Omega_{   i_1 \cdots i_K   }    f_{N} \big)   
		\\
	  & 											  
		=  \sum_{K  = 1 }^M  
		\beta_K  \frac{N}{ \binom{N}{K}} 
		\sum_{n = 1 }^K 
		\,  \1_{K, n} \! 
		\sum_{    \substack{  i_1 < \cdots  <   i_K  \\   i_n \leq s < i_{n+1} }  } 
		C^{s,K,n}_{ i_{1} \cdots   i_{n }}  f_N^{(s + K - n )}  \ . 
		\end{align}
		Note that $C^{s,K,n}_{ i_{1} \cdots   i_{n }}  f_N^{(s + K - n )}   $  does not depend on the indices $ i_{ n +1 } < \cdots  < i_{n +r }$, so these can be summed out. We find that 
		\begin{align}
		\partial_t f_N^{ ( s )  }   
		& 											\nonumber  
		=  \sum_{K  = 1 }^M   \beta_K  \frac{N}{ \binom{N}{K}} 
		\sum_{n = 1 }^K 
		\,  \1_{K, n} \! 
		\sum_{    1 \leq   i_1 < \cdots  <   i_n \leq s  } 
		\bigg(       \sum_{ s + 1   \leq    i_{n + 1 } < \cdots i_K \leq N  }  \! \! 1      \bigg) 
		C^{s,K,n}_{ i_{1} \cdots   i_{n }}  f_N^{(s + K - n )}
		\\
		& = 				
		\sum_{K  = 1 }^M 
		\beta_K  \frac{N}{ \binom{N}{K}} 
		\sum_{n = 1 }^K 
		\,  \1_{K, n} \! 
		\sum_{   1 \leq   i_1 < \cdots < i_n \leq s   } 
		\binom{N - s}{  K  -  n }
		C^{s,K,n}_{ i_{1} \cdots   i_{n }} 
		f_N^{(s + K - n )}\, .
		\end{align}
			Further, note that
			$
			\1_{K, n} C^{s,K,n}_{ i_{1} \cdots   i_{n }}  = C^{s,K,n}_{ i_{1} \cdots   i_{n }}  $.
			Finally,  we  make the substitution $r = K - n$ to obtain
			\begin{align}
		\partial_t f_N^{ ( s )  }   
		& 											\nonumber  
		=  \sum_{r  = 0 }^{M-1} \sum_{n = 1 }^{M-r}  \beta_{r+n}  \frac{N}{ \binom{N}{r+n}} \binom{N-s}{r}
		\sum_{   1 \leq   i_1 < \cdots < i_n \leq s   }
		C^{s,r+n,n}_{ i_{1} \cdots   i_{n }}  f_N^{(s + r )}
		\\
		& = 				
        \sum_{r =0}^m \c^N_{s,s+r } f_N^{(s+r)}
		\end{align}
		where on the last line we   recall that $M-1 = m$.
		This finishes the proof.
	\end{proof}

	The next result shows that
	the operators that drive the BBGKY hierarchy
 fit  the abstract framework   introduced in Section \ref{section main results}.

	\begin{lemma}\label{lemma estimates}
Assume $M/N \leq \ve \in (0,1)$. 
Then, 	the    operators $  ( {\c}_{s,s+k}^N )_{k = 0   }^{ m  } $ satisfy Condition \ref{condition 1} with constants $(R_k)_{k=0}^m$
given by 
\begin{equation}
     R_k  
     =
     2
     \sum_{ \ell =  k + 1   }^{M}
     \frac{ \beta_{\ell } }{ (1 - \ve)^{ \ell   }}
     \binom{ \ell  }{ k } 
\end{equation}
	\end{lemma}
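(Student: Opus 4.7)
The plan is to reduce everything to an estimate on the combinatorial factor multiplying each operator $C^{s,K,n}_{i_1 \cdots i_n}$. Starting from Definition \ref{definition 6}, we use the triangle inequality together with the uniform bound $\|C^{s,K,n}_{i_1 \cdots i_n}\|_{L^1 \to L^1} \leq 2$ recorded after Definition \ref{definition 5}, and the fact that the inner index sum contains exactly $\binom{s}{n}$ terms. This yields
\begin{equation*}
\| \c^N_{s,s+k} f^{(s+k)} \|_{L^1}
\ \leq \
2 \sum_{n=1}^{M-k} \beta_{k+n}\, \frac{N \binom{N-s}{k}\binom{s}{n}}{\binom{N}{k+n}} \, \|f^{(s+k)}\|_{L^1}.
\end{equation*}
After relabeling $\ell = k+n$, the estimate reduces to showing $N\binom{N-s}{k}\binom{s}{\ell-k}/\binom{N}{\ell} \leq s\binom{\ell}{k}(1-\varepsilon)^{-\ell}$ for each $\ell \in \{k+1, \ldots, M\}$.

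For the key combinatorial estimate, I would expand the binomial coefficients in factorials and rearrange to obtain the identity
\begin{equation*}
\frac{N \binom{N-s}{k}\binom{s}{n}}{\binom{N}{\ell}}
\ = \
\binom{\ell}{n}\,\frac{s(s-1)\cdots(s-n+1)\,(N-s)(N-s-1)\cdots(N-s-k+1)}{(N-1)(N-2)\cdots(N-\ell+1)}.
\end{equation*}
The numerator has $n + k = \ell$ factors and the denominator has $\ell - 1$ factors, so after pulling out a single factor of $s$ from the numerator, each of the remaining $\ell - 1$ numerator factors can be matched with a denominator factor. Using $(s-j) \leq N$ and $(N-s-j) \leq N$ in the numerator, and $(N-j') \geq N - \ell + 1$ in the denominator, each such ratio is bounded by $N/(N-\ell+1)$. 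Finally, since $\ell \leq M$ and the hypothesis $M/N \leq \varepsilon$ gives $N/(N-\ell+1) \leq 1/(1-\varepsilon)$, the product is bounded by $(1-\varepsilon)^{-(\ell-1)} \leq (1-\varepsilon)^{-\ell}$. Using $\binom{\ell}{n} = \binom{\ell}{k}$, this yields precisely the claimed bound.

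Putting these two steps together produces Condition \ref{condition 1} with the constants
$R_k = 2 \sum_{\ell=k+1}^M \beta_\ell \binom{\ell}{k}(1-\varepsilon)^{-\ell}$, which are independent of $s$ and $N$ as required.

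The bulk of the work is in the combinatorial estimate: the delicate point is that one has to extract a single factor of $s$ from a product that a priori only gives a factor of $N$ (via Vandermonde's identity $\binom{N-s}{k}\binom{s}{n} \leq \binom{N}{\ell}$). The extraction works because after cancelling one numerator factor against the leading $N$, the remaining numerator and denominator have the same number of terms, and the Boltzmann--Grad type scaling $M/N \leq \varepsilon$ is exactly what is needed to keep each ratio uniformly bounded by $1/(1-\varepsilon)$.
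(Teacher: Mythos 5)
Your proposal is correct and follows essentially the same route as the paper: reduce to the combinatorial prefactor via $\|C^{s,K,n}_{i_1\cdots i_n}\|\le 2$ and the count $\binom{s}{n}$, then show $N\binom{N-s}{k}\binom{s}{n}/\binom{N}{\ell}\le s\binom{\ell}{k}(1-\ve)^{-\ell}$ using the scaling $M/N\le\ve$ and the extraction of a single factor of $s$ (the paper does this via the two-sided bound $(1-k/n)^k n^k/k!\le\binom{n}{k}\le n^k/k!$ and $s^n/N^{n-1}\le s$, while you match factors pairwise in an exact factorial identity, but the mechanism and the resulting constants $R_k$ are identical). The only cosmetic omission is the remark that one may assume $s\le N$, since $\c^N_{s,s+k}=0$ otherwise; this is needed for your bound $s-j\le N$.
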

	 
In the upcoming proof, 
we will make 
use  of the following two inequalities
 \begin{equation} 
 \label{asym formula}
( 1 - k/n)^k
\, 
\frac{n^k}{k!}
\leq
\binom{n}{k}
\leq 
\frac{n^k}{k!} \ , \qquad \forall n \in \N , \ \forall k \leq n \ , 
 \end{equation}
which can be easily derived by noting,
\begin{equation}
    (1-k/n)^k n^k = (n-k)^k \leq n(n-1) \cdots (n-(k-1)) \leq n^k.
\end{equation}
	
	\begin{proof} 
We assume without loss of generality that $s \leq N$, for otherwise $\c_{s , s +   k  }^N = 0$ for any $ k \geq 0$. 
First, 	recalling that 
$
\| C^{s,K,n}_{i_1 \cdots i_n  }   \|
\leq 
2
$ 
we find that 
	\begin{align}
	    \| 
	    \c_{s,s+k}^N
	    \|
	     & = 
	     \big\|
	      \sum_{n=1}^{M-k} \beta_{ k +n} \frac{N}{\binom{N}{ k +n}} \binom{N-s}{ k }
	    \sum_{1\leq i_1 < \cdots <i_n \leq s} C_{i_1,\cdots,i_n}^{s, k +n,n}
	     \big\| \\
	     &   \, 
	     \leq 
	     2
	      \sum_{n=1}^{M-k} 
	      \beta_{ k +n} 
	      \frac{N}{\binom{N}{ k +n}}
	      \binom{N-s}{ k }    
	      \binom{s}{n},
	      \label{lemma 6.4 eq 1}
	\end{align}
	where we have used the fact that
	$
	 \sum_{1\leq i_1 < \cdots <i_n \leq s}  = \binom{s}{n}
	$. 
	Next, we use Eq. \eqref{asym formula}
	to estimate,
	\begin{align}
	\label{lemma 6.4 eq 2}
	     N  \binom{N - s}{k } \binom{s}{n}
	     \leq 
	     N  \binom{N}{k } \binom{s}{n}
	     \leq 
	     \frac{N^{k+1}}{k! } \frac{s^n}{  n! } \ . 
	\end{align}
	Similarly, for the denominator we find
	\begin{align}
	\label{lemma 6.4 eq 3}
	\binom{N}{n+k}
	\geq 
	\Big( 
	 1  - (n+k)/N
	\Big)^{n+k}
	\frac{N^{n+k}}{(n+k)!}
	\geq 
	( 1 - \ve )^{n  + k }
	\frac{N^{n+k}}{ (n+k)!}
	\end{align}
	where, for the second inequality, 
	we used the fact that 
	$ (n+k)/N \leq M /N  \leq  \ve$.
	We put together Eqs. \eqref{lemma 6.4 eq 1}, \eqref{lemma 6.4 eq 2} and \eqref{lemma 6.4 eq 3}
	to find that
	\begin{align}
	      \| 
	    \c_{s,s+k}^N
	    \|
	       \  \leq \ 
	       2
	      \sum_{n=1}^{M-k} 
	      \beta_{ k +n} 
	      (1 - \ve)^{-(n+k)}
	     \binom{n+k}{k}
	     \frac{s^n}{N^{n-1}}
	  \   \leq  \ 
	     R_k \, s 
	\end{align}
	    where, in the second inequality, we have used the upper bound
	    $s^n  N^{ - (n -1 )} \leq s,$
	    followed by a change of variables $\ell = n + k $. 
This finishes the proof.  
	\end{proof}
	%	(1) Recall that $\| C_i^{s,K,1} \|\leq 2$. Thus, for all $ k = 1 , \ldots,  m  $ one finds that 
%\begin{equation}
%\|    \widetilde{\C}_{s,s+k}^N 		\|  
%\  \leq  \ 
%\frac{\beta_K N }{ \binom{N}{K} }\binom{N-s}{K-1} 2s
%\  \leq \ 
%\frac{N}{\binom{N}{K}} \binom{N-1}{K-1 } 2s 
%\  = \ 
%2K s 
%\end{equation}
%where in the last equality we used the identity $\binom{N}{K} = \frac{N}{K} \binom{N-1 }{K-1}$. 
%
%\vspace{1mm}
%
%\noindent (2) Similarly, one finds for the remainder term that
%\begin{equation}
%\|  \calR_{s,s +r }^N \|  
%\leq
%\sum_{ K = r + 2 }^M 
%\beta_K 
%\frac{ N }{     \binom{N}{K}  }
%\binom{N -s }{  r  }
%2 s^{K -r} \ . 
%\end{equation}
%In view of $ 0 \leq r < K  \leq M $, we may estimate the above coefficients as
%\begin{equation}
%\frac{\binom{N -s }{r }}{ \binom{N }{ K } }
%\leq 
%\frac{\binom{N }{r }}{ \binom{N }{ K } }
%= 
%\frac{ (K! / r! )}{  (N-r ) \cdots ( N - K  + 1 )    }
%\leq 
%\frac{ (K! / r! ) }{
%	N^{K-r} (1 - M/N)^{K-r }
%}  
%\leq 
%\frac{ M!  }{
%	(1 - M/N)^{M}
%}    
%\cdot 
%\frac{1}{	N^{K-r} }
%\end{equation}
%Finally, we combine the last bound with the inequality $s^{K-r } \leq s^2 N^{K - r -2 }$ to conclude that  for $ N_* = N_*(M)$ large enough it holds that 
%\begin{equation}
%\|  \calR_{s,s +r }^N \|  
%\   \leq  \ 
%\frac{ 2 M!  }{ 
%	(1 - M/N)^{M} } 
%\frac{s^2}{ N }
%\   \leq  \ 
%\frac{4 M!  s^2}{N } \  , \qquad  \quad N \geq N_*(M) \ . 
%\end{equation}
%Note that   we have used the fact that $\sum_K \beta_K = 1$. This finishes the proof. 

	\vspace{1mm}
	
	\subsection{Convergence of operators}
For $s \in \N$ and $ 0 \leq k \leq M$, we introduce the operator
	$$  \c_{s,s+k}^\infty : 
	L^1_{ \textnormal{sym}  } ( \R^{d(s+k)})  
	\rightarrow 
	L^1_{ \textnormal{sym}  }  ( \R^{ds })$$ 
	given by   
	\begin{align}
	( 
	\c_{s,s+k}^\infty 
	f^{(s+k)} 
	) 
	(
	V_{s  } 
	)
	& := 
	\beta_K  K 
	\sum_{i=1}^s 
	\int_{\S_{K }\times \R^{d k } }  
	%\int_{B_N^{(s + k-1 )}}d v_{s + 1 } \ldots d v_{s+ k -1 }  \\ 
	\Big(
	f^{(s+k)} 	
	( 
	V^{*i}_{s +  k} 
	)
	-
	f^{(s+k)} 
	(V_{s+k})
	\Big)
	\nonumber 
	\d b_K(\omega)
	\d v_{s+1} \cdots v_{s+ k }
	\ 
	\end{align}		
for $ k \geq 1$,  and 	with the obvious modification for $k=0$. Here, $V_{s+k}^*$ is as in Definition \ref{definition 5}. 

\vspace{1mm}

Our following result establishes convergence of operators, 
which in turn allow us to apply  
Theorem \ref{thm convergence}. 
In order to state it, we introduce on   $X = \oplus_{s \in \N } L^1_\sym  (\R^{ds})$ the linear  operators
\begin{align}
\label{def C^N}
     (\c^N F)^{(s)} 
      : =
     \sum_{ k = 0 }^m 
       \c_{s,s+k}^N  f^{(s+k )}
     \ , 
     \qquad F = (f^{(s)})_{s \in \N }
\end{align}
where $\c^N_{s,s+k}$ was defined in Definition \ref{definition 6}, 
	and 
	\begin{align}
	\label{def C^inf}
     (\c^\infty  F)^{(s)}  : =
     \sum_{ k = 0 }^m 
       \c_{s,s+k}^\infty  f^{(s+k )}
     \ , 
     \qquad F  = (f^{(s)})_{s \in \N } \ . 
\end{align}
	
	\vspace{1mm}
	
	\begin{lemma}
	\label{lemma convergence}
		 Let $\c^N$ be as in \eqref{def C^N}, and
		 $\c^\infty $ be as in \eqref{def C^inf}, respectively.
		 Then, $\c^N$ converges to $\c^\infty $
		in the sense of Definition \ref{definition convergence}. 
	\end{lemma}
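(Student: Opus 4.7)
The plan is to reduce the lemma, via linearity and the fact that for each $s$ only finitely many summands $k \in \{0,\ldots,m\}$ contribute, to a componentwise statement: for each fixed $s \in \N$, $k \in \{0,\ldots,m\}$, and test function $\varphi \in L^\infty(\R^{ds})$, show that
\begin{equation*}
\langle \c_{s,s+k}^N f_N^{(s+k)}, \varphi \rangle \ \longrightarrow \  \langle \c_{s,s+k}^\infty f^{(s+k)}, \varphi \rangle \quad \text{as } N \to \infty.
\end{equation*}
The key structural observation is that in \eqref{C^N formula} only the $n=1$ contribution is of order $O(1)$ as $N \to \infty$; all higher $n$ carry combinatorial weights that decay in $N$. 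Accordingly, I would split
\begin{equation*}
\c_{s,s+k}^N \ = \ \alpha_k^N \sum_{i=1}^s C_i^{s,k+1,1} \ + \ \mathcal{R}_{s,s+k}^N, \qquad \alpha_k^N := \beta_{k+1} \frac{N}{\binom{N}{k+1}} \binom{N-s}{k},
\end{equation*}
where $\mathcal{R}_{s,s+k}^N$ gathers the $n \geq 2$ contributions.

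For the leading term, the asymptotics \eqref{asym formula} give $\alpha_k^N \to \beta_{k+1}(k+1)$, matching the coefficient appearing in $\c_{s,s+k}^\infty$ per \eqref{boltzmann operators}. To transfer the pointwise weak convergence $f_N^{(s+k)} \to f^{(s+k)}$ through the operator $\sum_i C_i^{s,k+1,1}$, I would compute its adjoint $L^\infty(\R^{ds}) \to L^\infty(\R^{d(s+k)})$ explicitly. Rewriting $\langle C_i^{s,k+1,1} g, \varphi \rangle$ as an integral of $g$ against a kernel on $\R^{d(s+k)}$, the gain term requires a change of variables along $T_{i,s+1,\ldots,s+k}^\omega$: hypothesis \h{} ensures this map is an isometry, so the Jacobian is one, while hypothesis \hh{} allows, under the $\d b_{k+1}(\omega)$-average, the replacement of $(T_{k+1}^\omega)^{-1}$ by $T_{k+1}^\omega$. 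The outcome is a bounded adjoint with $\|\cdot\|_\infty \leq 2\|\varphi\|_\infty$, and the hypothesis on $F_N$ immediately yields convergence of the pairing against $\varphi$.

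For the remainder $\mathcal{R}_{s,s+k}^N$, the uniform operator bound $\|C_{i_1 \cdots i_n}^{s,k+n,n}\| \leq 2$, the counting $\#\{1 \leq i_1 < \cdots < i_n \leq s\} = \binom{s}{n}$, and \eqref{asym formula} together show that the coefficient $\beta_{k+n}\frac{N}{\binom{N}{k+n}}\binom{N-s}{k}$ is of order $N^{-(n-1)}$ whenever $n \geq 2$. Combined with the uniform bound $\sup_N \|f_N^{(s+k)}\|_{L^1} < \infty$, supplied by the Banach--Steinhaus theorem applied to the hypothesis $F_N \xrightarrow{pw} F$, one obtains $\|\mathcal{R}_{s,s+k}^N f_N^{(s+k)}\|_{L^1(\R^{ds})} = O(N^{-1})$, and in particular its pairing with any $\varphi \in L^\infty$ vanishes in the limit.

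The main obstacle is the careful adjoint identification for $C_i^{s,k+1,1}$: one must track how the extension $T_{i,s+1,\ldots,s+k}^\omega$ acts on the first $s$ coordinates, so that after the change of variables the resulting object is a bounded function on $\R^{d(s+k)}$ uniformly in $\omega$. This is exactly the role played by the combination of \h{} and \hh{}, the latter converting the a priori pullback by $(T_{k+1}^\omega)^{-1}$ into the forward map $T_{k+1}^\omega$ under the $\omega$-average. Once this adjoint is in hand, the remainder of the argument reduces to the two elementary asymptotic estimates for binomial coefficients above.
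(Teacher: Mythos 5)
Your proposal is correct and follows essentially the same route as the paper: the same split of $\c_{s,s+k}^N$ into the $n=1$ leading term plus an $n\geq 2$ remainder, the same duality/adjoint argument (using \textbf{(H1)} for the unit Jacobian and \textbf{(H2)} to trade $(T_{k+1}^\omega)^{-1}$ for $T_{k+1}^\omega$ under the $\d b_{k+1}$-average) to transfer pointwise weak convergence through the leading term, and the same $O(s^2/N)$ operator-norm bound on the remainder combined with Banach--Steinhaus. No substantive differences.
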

	
	\vspace{1mm}
	% \begin{equation}\label{dual identity}
	% \< 
	% \C_{s , s +  k   }^N
	% f^{(s + k   )} , \vp 
	% \>_{ B_N^{(s)}}
	% =
	% \frac{1 }{ k + 1  }
	% \frac{ N \binom{N-s}{ k }}{\binom{N}{k + 1 }}
	% \< 
	% f^{(s + k  )} , 
	% \mathcal{D}_{s+ k, s  } \vp_s 
	% \>_{ B_N^{(s +  k )}} \, .
	% \end{equation}
	
	\begin{proof}
	Let us fix $ k \in \{ 0 , \ldots, M-1 \}$.
First, we decompose the BBGKY operator into a \textit{leading order term}, and a 
\textit{remainder term}
\begin{equation}
\label{C decomposition}
\c_{s,s+k}^N = \Tilde{\c}_{s,s+k}^N 
+  
\calR_{s,s+k}^N.
\end{equation}
This decomposition follows from Eq. \eqref{C^N formula}--the leading order term  corresponds to the $ n  =1 $ contribution whereas the remainder term corresponds to the $ n \geq 2$ contribution.
Explicitly, 
we have
\begin{equation}\label{C tilde}
\widetilde{\c}_{s , s +   k  }^N 
: = 
\beta_{k +1}  \frac{N}{ \binom{N}{ k+1 }} 
\binom{N -s }{  k   }
\sum_{1 \leq i \leq s }
C_{i}^{s , k + 1  , 1 } 			 
\end{equation}
and 
\begin{equation}\label{reminder}
\calR_{s,s +k }^N :=
\sum_{n=2}^{M-k} \beta_{ k +n} \frac{N}{\binom{N}{ k +n}} \binom{N-s}{ k }
\sum_{1\leq i_1 < \cdots <i_n \leq s} C_{i_1,\cdots,i_n}^{s, k +n,n}
\end{equation}

\vspace{1mm}

	The following is enough to prove our claim. 	Let $F_{N} =(f_N^{(s)})_{s \in \N }  \in X$ converge weakly to $F = (f^{(s)})_{s \in \N } \in X$. 
		Then, for all $s \in \N $ and  $\vp_s \in L^\infty  (\R^{ds})$: 
		\begin{enumerate}
			\item  There holds 
			$$
			\lim_{ N  \rightarrow \infty }  
			\<   \widetilde{\c}_{s , s+ k }^N   f_N^{(s+k)}  , \vp_s  \> 
			=
			\<   \c_{s , s+ k }^\infty    f^{(s+k)}  , \vp_s  \>  	   \ . 
			$$
			\item There holds 
			$$
			\lim_{ N  \rightarrow \infty }  
			\<   \calR_{s , s+ k  }^N   f_N^{(s+r)}  , \vp_s  \>  
			=
			0 \ . 
			$$
		\end{enumerate}

	\vspace{1mm}

		We shall assume for simplicity that $k\geq 1$, the case $k=0$   being analogous.
		
		\vspace{1mm}
		
	 \textit{Proof of (1)}	Let us denote by $\mathcal{D}_{s+ k ,s } =  (  \c_{s,s+k}^\infty  )^* : L^\infty (\R^{ds }) \rightarrow L^\infty (\R^{d (s+k)})$ the Banach space adjoints of the limiting collisional operators. 
		In particular, they admit the representation 
		\begin{align}
		( 
		\mathcal{D}_{s+ k, s  }
		\vp_s ) 
		(
		V_{s + k } 
		)
		& 
		= 
		\beta_{k+1} (k+1)
		\sum_{i=1}^s
		\int_{\S_{K }}  
		%\int_{B_N^{(s + k-1 )}}d v_{s + 1 } \ldots d v_{s+ k -1 }  \\ 
		\Big(   
		(  \vp_s \otimes \1_k   )
		(  
		V_{s + k   }^{*i}
		)
		-
		( \vp_s \otimes  \1_k   )  ( V_{s + k   } ) 
		\Big)  						\nonumber 
		\d b_K(\omega )  \ . 
		\end{align}	
		where $\1_k $ is the $dk$-dimensional identity. A straightforward calculation based on a change of variables shows that 
		\begin{equation}\label{dual identity}
		\langle
		\widetilde{\c}_{s , s +  k   }^N
		f_N^{(s + k   )} , \vp 
		\rangle 
		=
		\frac{1 }{  k+1  }
		\frac{ N \binom{N-s}{  k   }}{\binom{N}{ k+1  }}
		\langle
		f_N^{(s + k  )} , 
		\mathcal{D}_{s+ k, s  } \vp_s 
		\rangle
		\, .
		\end{equation}
		Since  $\mathcal{D}_{s+ k, s  } \vp_s  \in L^\infty  (\R^{d (s + k)})$ we use   weak convergence of the marginals to calculate that 
		\begin{align}
		\lim_{N\rightarrow \infty }				\nonumber  
		\langle 
		\widetilde{\c}_{s , s +  k   }^N
		f_N^{(s + k   )} , \vp 
		\rangle 
		& =
		\lim_{ N  \rightarrow \infty } 		
		\Bigg( 
		\frac{1 }{  k+1   }
		\frac{ N \binom{N-s}{  k   }}{\binom{N}{  k+1 }}
		\Bigg) 
		\lim_{ N  \rightarrow \infty }
		\langle 
		f_N^{(s + k  )} , 
		\mathcal{D}_{s+ k, s  } \vp_s 
		\rangle  \\ 	
		& = 			\nonumber   
		\langle  f^{ (s+ k)}  ,  \mathcal{D}_{s+ k, s  } \vp_s 	\rangle  \\
		&  =
		\langle  \c_{s , s +  k   }^\infty  f^{ (s+ k)}  ,    \vp_s 	\rangle  \ . 
		\end{align}
		This finishes the proof of (1). 
		
		\vspace{1mm}
		
		\noindent  \textit{Proof of (2)} 
		First, we establish a norm estimate for the remainder term. 
		The same analysis done in Lemma \ref{lemma estimates} 
		can be carried out for the remainder term to   find that 
		for $N \geq \ve^{-1} M $
		\begin{equation}
		    \|
		    \calR_{s,s+k}^N  
		    \| 
		    \leq 
		    2
		    \sum_{n=2}^{M-k}
		    \frac{\beta_{n+k}}{(1 - \ve)^{n+k}}
		      \binom{ n+ k }{ k }
		      \frac{s^{n}}{N^{  n -1 }} \ . 
		\end{equation}
For $s \leq N$, 	and $ n \geq 2 $	we can now use the alternative upper bound  
		$s^n = s^2 s^{n -2 } \leq s^2 N^{n-2 } $ to find that 
		the following estimate holds
		\begin{equation}
		\label{remainder estimate}
		      \|
		    \calR_{s,s+k}^N  
		    \| 
		    \leq 
		    C_k \frac{s^2}{N}
		\end{equation}
		where $
		C_k = \sum_{\ell = k +2}^{M}
		      (1 - \ve)^{-\ell}
		      \beta_{\ell} 
		      \binom{ \ell}{ k } $. 
		Next, fix $s \in \N $ and note that--thanks to weak convergence and the Uniform Boundedness Principle--the quantity $K_s = \sup _{N \in \N }    \|  f_N^{(s)}  \|_{L^1_\sym  (\R^{ds})  }$ is finite. 
		Thus, we find that the following estimate  holds
		\begin{equation}
		|	 \<   \calR_{s,s+k}^N f_{s,s+k}^N , \vp_s  \>	| 
		\, \leq  \, 
		K_s  \, \|  \calR_{s ,s+k}^N   \| 
		\,  \| \vp_s  \|_{L^\infty }
		\, \leq \, 
		\frac{	K_s \,  C_k \, s^2}{N}
		 \|  \vp_s  \|_{L^\infty }
		\end{equation}
		from which our claim follows after taking the $N \rightarrow \infty $ limit. 
	\end{proof}

	\begin{proof}[Proof of Theorem \ref{thm bbgky to boltzmann}]
	    Lemma \ref{lemma estimates} implies that the operator
	    $\c^N$
	    satisfies Condition \ref{condition 1}. 
	    Similar arguments show that $\c^\infty$ satisfies Condition \ref{condition 2}. 
	    Further, Lemma \ref{lemma convergence} shows that $\c^N$
	    converges to $\c^\infty$ in the sense of Definition \ref{definition convergence}. 
	    In order to prove Theorem \ref{thm bbgky to boltzmann}, it suffices to apply Theorem \ref{thm convergence}
	    to any mild solutions of the BBGKY and Boltzmann hierarchies, respectively.
	\end{proof}

 	\begin{proof}[Proof of Theorem \ref{thm prop of chaos}]
Let $f_N $ be the solution of the Master equation \eqref{master eq},  $(f_N^{(s)})_{s \in \N }$   its sequence of marginals \eqref{marginals}, and $f_0 \in L^1 (\R^d)$    the initial datum  for which $f_{N}^{(s)} (0)$ converges pointwise 
weakly to $f_0^{\otimes s }$. 
	We  apply Theorem \ref{thm bbgky to boltzmann} to conclude that $(f_N^{(s)})_{s \in \N }$ converges in observables to
	$F = (f^{(s)})_{s \in \N }$--the solution of the Boltzmann hierarchy \eqref{boltzmann hierarchy} with initial data $F_0  = (f_0^{\otimes s })_{s \in \N}  $--over $[0,T]$. 
		
		\vspace{1mm}
		
		Finally, let $f (t,v)$ be the solution of the generalized Boltzmann equation \eqref{Boltzmann equation} with initial data $f_0$. 
		A straightforward calculation shows that $(f^{\otimes s})_{s \in \N }$ is a mild solution of the Boltzmann hierarchy \eqref{boltzmann hierarchy}.
		Because of Proposition \ref{corollary 2}, the Boltzmann hierarchy is well-posed. Uniqueness then implies that
		$f^{(s)} = f^{\otimes s}$ for all $s \in \N$. 
		Consequently, for all $\vp_s \in L^\infty (\R^{ds})$ it holds that 
		$$ \langle f_N^{(s)}(t ) ,  \vp_s \rangle  \longrightarrow \<  f(t)^{\otimes s } , \vp_s   \>  
		\quad \textnormal{ as } \quad 
		N \rightarrow\infty  $$
		uniformly in $t \in [0,T]$. 
		Since $T$ is independent of the initial conditions, and thanks to the global apiori bounds
		\begin{equation}
		\sup_{N \in \N} \sup_{s  \leq N }  \|  f_{N}^{(s)}  (t, \cdot)   \|_{L^1 } = 1 \, , \qquad \forall t  \geq 0 
		\end{equation}
		one may repeat the above argument to prove convergence for arbitrarily large $t \geq 0$. This finishes the proof. \end{proof}

	\section{Well-posedness} \label{appendix wp}
	
	\subsection{The  Hierarchies}
	In this subsection we address the question of well-posedness 
	of the finite and infinite hierarchy, respectively. 
	We only give a proof of Proposition \ref{corollary 2}, the other one being completely analogous.

	\begin{proof}[Proof of Proposition \ref{corollary 2}]
		For simplicity, let us denote $\bC \equiv \bC^\infty$. 
		Let $ \bF   =  ( \bf^{(s)} )_{s \in \N} \in \bX_{\bmu}  $ .
		Then, we obtain thanks to Condition \ref{condition 2}   the following estimate 
		\begin{align}
		\|								\nonumber 
		\Big( 
		\int_0^t 
		\bC    \big[   \bF    (\tau) \big]
		\,  \d \tau 
		\Big)^{(s)}  		\|_{X^{(s)}}	 
		& \leq
		\int_0^t 
		\|								\nonumber 
		\Big( 
		\bC    \big[   \bF    (\tau) \big]
		\Big)^{(s)}  		\|_{X^{(s)}}
		\,  \d \tau 		\\ 
		&  \leq 			\nonumber
		\int_0^t 
		\sum_{k=0}^m
		s
		\rho_k 
		\|	    \bf^{(s + k )}(\tau )		\|_{X^{(s)}}
		\, \d \tau \\
		&  =  				\nonumber
		\int_0^t 
		\sum_{k=0}^m
		s
		\rho_k 
		e^{ - \bmu (\tau) (s + k )}
		e^{ \bmu (\tau) (s + k ) }
		\|	    \bf^{(s + k )}(\tau )		\|_{X^{(s)}}
		\, \d \tau \\ 
		& \leq 		 
		\nonumber 
		\int_0^t 
		\sum_{k=0}^m
		s 
		\rho_k  e^{- \bmu (\tau) (s + k  )}
		\, \d \tau   \, 	\| \bF \|_{\bmu }			\\
		& \leq
		T  
		\sum_{k=0}^m
		\frac{s}{ s+ k }
		\rho_k   e^{  - \bmu(t) (s +  k) } \, 	\| \bF \|_{\bmu  }	\nonumber  \\
		& 
		\leq 
		\Big(
		T \sum_{k = 0}^m \rho_k  e^k
		\Big)
		e^{- \bmu(t)s		} 
		\| \bF \|_{\bmu} 
		=
		\theta
			e^{- \bmu(t)s		} 
		\| \bF \|_{\bmu} 
		\label{estimate wp}
		\end{align}
		where we have defined 
		$\theta := T \sum_{k = 0}^m \rho_k  e^k  \in (0,1) $. 
		
		\vspace{1mm}
		
		On $\bX_{\bmu }$ we introduce  the map  
		$\bF \mapsto F_0 
		+
		\int_0^t \     \bC   \bF   (\tau )    \d \tau  =: \M [ \bF ]$. 
		Linearity of $\bC$, and the  estimate contained in Eq. \eqref{estimate wp},  
		imply that 
		$\| \M[ \bF] - \M[ \bG] \|_{\bmu } 
		\leq
		\theta 
		\|  \bF - \bG \|_{\bmu}$ 
		for all $\bF,\bG \in \bX_{\bmu} $. 
		Therefore, $\M $ is a contraction.
		Let $r = (1 - \theta)^{-1} \theta \in (0,\infty)$,
		and define $ R  := r  \|  F_0\|_{X_0}$.
		Then, estimate \eqref{estimate wp} and the triangle inequality show that 
		$$ 
		\|  \M [\bF] - F_0 \|_{\bmu} 
		\leq 
		\theta 
		\|  \bF \|_{\bmu  } 
		\leq 
		\theta 
		\|  \bF - F_0  \|_{\bmu  } 
		+
		\theta 
		\|  F_0 \|_{\bmu  } 
		\leq R 
		$$
		whenever $ \|   \bF -  F_0 \|_{\bmu } \leq R $. Therefore, $\M$ maps the ball $B_R (F_0 ) \subset \bX_{\bmu}$ or radius $R$ around $F_0$, into itself. The conclusion of the theorem now  follows from Banach's fixed point theorem. The continuity estimate also follows easily from our considerations. 
	\end{proof}

	\subsection{The Boltzmann Equation}
	The main goal of this subsection is to prove Proposition \ref{prop global wp}. 
	First, we prove the following two lemmas. 
	
	\begin{lemma}[Local well-posedness]\label{lemma local wp}
		For all $f_0 \in L^1(\R^d)$   
		there exists $0 < T_* = T_* ( \|  f_0\|_{L^1 } )$ such that
		there is a unique mild solution  $f \in C([0,T_*] , L^1(\R^d ))$ to 
		the Boltzmann equation \eqref{Boltzmann equation}  with initial data $f_0$. 
	\end{lemma}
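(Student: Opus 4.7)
The plan is to rewrite the equation in mild form
\[
f(t) = f_0 + \int_0^t \sum_{K=1}^M \beta_K Q_K(f(\tau),\ldots,f(\tau))\, \d\tau
\]
and construct a local solution via Banach's fixed point theorem in the Banach space $\bX_T := C([0,T], L^1(\R^d))$ for a time $T=T_*$ depending only on $\|f_0\|_{L^1}$. The map we contract is
\[
\Phi[f](t) := f_0 + \int_0^t \sum_{K=1}^M \beta_K Q_K(f(\tau),\ldots,f(\tau))\, \d\tau.
\]

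First, I would establish the basic multilinear $L^1$ estimate for the collision operators. From the definition \eqref{collision operators}, splitting the gain and loss terms and using that $T_K^\omega$ is a linear isometry by $\h$ (hence has unit Jacobian, so the change of variables $V\mapsto T_K^\omega V$ preserves Lebesgue measure on $\R^{dK}$), together with the fact that $b_K$ is a probability measure on $\S_K$, one obtains
\[
\|Q_K(f_1,\ldots,f_K)\|_{L^1(\R^d)} \leq 2K \prod_{\ell=1}^K \|f_\ell\|_{L^1(\R^d)}.
\]
Next, by multilinearity of $Q_K$, the telescoping identity
\[
Q_K(f^{\otimes K}) - Q_K(g^{\otimes K}) = \sum_{i=1}^K Q_K(g,\ldots,g,\underbrace{f-g}_{i\text{-th}},f,\ldots,f)
\]
combined with the previous estimate yields the local Lipschitz bound
\[
\|Q_K(f^{\otimes K}) - Q_K(g^{\otimes K})\|_{L^1} \leq 2K^2 \max(\|f\|_{L^1},\|g\|_{L^1})^{K-1}\,\|f-g\|_{L^1}.
\]

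With these two estimates in hand, I would fix $R := 2\|f_0\|_{L^1}$ and consider the closed ball
\[
B_R := \{ f \in \bX_T : \sup_{t\in[0,T]} \|f(t)\|_{L^1} \leq R\}.
\]
Choosing
\[
T_* := \Big(1 + 2\sum_{K=1}^M \beta_K K^2 R^{K-1}\Big)^{-1} \min\{1, \|f_0\|_{L^1}^{-1}\}
\]
(or any sufficiently small time of this form depending only on $\|f_0\|_{L^1}$), a direct calculation using the Lipschitz bound shows that $\Phi$ maps $B_{R}$ into itself and satisfies $\|\Phi[f]-\Phi[g]\|_{\bX_{T_*}} \leq \tfrac{1}{2}\|f-g\|_{\bX_{T_*}}$ for all $f,g\in B_R$. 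Banach's fixed point theorem then produces a unique mild solution $f\in C([0,T_*],L^1(\R^d))$.

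The main obstacle is ensuring that the choice of $T_*$ and $R$ is internally consistent and yields simultaneously the self-mapping and contraction properties; this is purely a book-keeping issue since the estimates are multilinear and the size of the nonlinearity grows with $\sum_K \beta_K R^{K-1}$. All other steps—linearity of the integral term in time, continuity in $t$ of $\Phi[f]$, and uniqueness within $B_R$—are routine once the $L^1$ multilinear estimates have been established.
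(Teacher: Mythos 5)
Your proposal is correct and follows essentially the same route as the paper: the paper's proof consists precisely of stating the bounds $\|Q_K(f)\|_{L^1}\leq 2K\|f\|_{L^1}^K$ and $\|Q_K(f)-Q_K(g)\|_{L^1}\leq 2K^2(\|f\|_{L^1}^{K-1}+\|g\|_{L^1}^{K-1})\|f-g\|_{L^1}$ and then invoking a fixed-point argument on the mild formulation, leaving the details to the reader. Your write-up simply supplies those details (isometry of $T_K^\omega$ for the measure-preserving change of variables, the telescoping multilinear Lipschitz estimate, and the self-map/contraction book-keeping on a ball), so there is nothing to flag.
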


	\begin{proof}
		The operators $Q_K$ satisfy the following estimates: for $f,g \in L^1$ there holds 
		\begin{align}
		\|  Q_K (f)   \|_{L^1 }					\label{continuity estimate 1}
		& 	\leq 
		2   K 
		\|  f    \|_{L^1}^K  \ ,  \\
		\|   Q_K(f)  - Q_K(g)   \|_{L^1 }   
		& \leq 										\label{continuity estimate 2}
		2  K^2 
		\big(
		\|		f	\|^{K-1 }_{L^1}  + \|	g 	\|^{K -1 }_{L^1}
		\big) 
		\|  f - g  \|_{L^1}  \ . 
		\end{align}
		Thanks to  these estimates, a  proof based on a fixed-point argument  
		shows that there is a unique solution to the integral equation
		$ f(t) = f_0 + \int_0^t  \sum_{K=1}^M \beta_K Q_K [ f(s) ,\ldots, f(s)] \d s $ . We leave the details to the reader. 
	\end{proof}
	
	\begin{lemma}[Conservation of mass]
		Let $f  \in C( [0,T_*] , L^1(\R^d) )$ be the continuous solution of  the Boltzmann equation in mild form \eqref{Boltzmann equation}. Then, 
		\begin{equation}
		\int_{\R^d }
		f(t,v) \d v =
		\int_{  \R^{d} } 
		f_0 (v) \d v \ , 
		\qquad 
		\forall t \in [0,T] \ . 
		\end{equation}
	\end{lemma}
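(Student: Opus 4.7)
The plan is to start from the mild form of the Boltzmann equation and integrate over $v \in \mathbb{R}^d$. Writing
\begin{equation*}
f(t,v) = f_0(v) + \int_0^t \sum_{K=1}^M \beta_K \, Q_K(f(\tau),\ldots,f(\tau))(v) \, \d\tau \ ,
\end{equation*}
the continuity estimate \eqref{continuity estimate 1} ensures $Q_K(f(\tau),\ldots,f(\tau)) \in L^1(\R^d)$ uniformly on $[0,T_*]$, so by Fubini the $v$-integral can be exchanged with the $\tau$-integral. It then suffices to prove that for every $K$ and every $g \in L^1(\R^d)$,
\begin{equation*}
\int_{\R^d} Q_K(g,\ldots,g)(v_1) \, \d v_1 = 0 \ .
\end{equation*}

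The key step is a change of variables argument using Hypothesis \textbf{(H1)}. Indeed, expanding the definition \eqref{collision operators},
\begin{equation*}
\int_{\R^d} Q_K(g,\ldots,g)(v_1)\,\d v_1 = K \int_{\R^{dK}} \int_{\S_K} \Big( g^{\otimes K}(T_K^\omega V) - g^{\otimes K}(V) \Big) \,\d b_K(\omega) \, \d V \ .
\end{equation*}
The integrand is dominated by $2\,g^{\otimes K}(V)$, which is integrable on $\R^{dK}\times \S_K$ with norm $2\|g\|_{L^1}^K$, so Fubini lets us bring the $\d b_K$-integral outside. For fixed $\omega \in \S_K$, the map $T_K^\omega : \R^{dK} \to \R^{dK}$ is linear and an isometry by \textbf{(H1)}, hence $|\det T_K^\omega| = 1$; performing the change of variables $V \mapsto T_K^\omega V$ gives
\begin{equation*}
\int_{\R^{dK}} g^{\otimes K}(T_K^\omega V) \, \d V = \int_{\R^{dK}} g^{\otimes K}(V) \, \d V \ ,
\end{equation*}
so the inner integral vanishes for each $\omega$.

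Combining these observations,
\begin{equation*}
\int_{\R^d} f(t,v)\, \d v = \int_{\R^d} f_0(v) \, \d v + \int_0^t \sum_{K=1}^M \beta_K \cdot 0 \, \d\tau = \int_{\R^d} f_0(v)\,\d v \ ,
\end{equation*}
which is the claim. I do not expect any serious obstacle: the whole argument reduces to justifying Fubini (which follows from the $L^1$ continuity estimate for $Q_K$) and to the isometry property of $T_K^\omega$ (which gives unit Jacobian without any averaging over $\omega$). Notably, Hypotheses \textbf{(H2)} and \textbf{(H3)} are not needed for this specific calculation. The only subtlety worth flagging is the $K=1$ case, which is analogous: the same change of variables applies to $T_1^\omega : \R^d \to \R^d$.
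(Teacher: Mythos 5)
Your proof is correct, and the key step --- the vanishing of $\int_{\mathbb{R}^d} Q_K(g,\ldots,g)\,\d v_1$ via the unit-Jacobian change of variables $V \mapsto T_K^\omega V$ guaranteed by the isometry in \textbf{(H1)} --- is exactly the computation the paper relies on. The only difference is cosmetic: the paper first upgrades the mild solution to a $C^1$ strong solution (using the Lipschitz estimate on $Q_K$) and then differentiates $\int f(t,v)\,\d v$ under the integral sign, whereas you integrate the Duhamel formula directly and justify the interchange by Fubini; both routes are sound, and your observation that \textbf{(H2)}--\textbf{(H3)} play no role here is accurate.
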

	\begin{proof}
		Thanks to the estimate  \eqref{continuity estimate 2}, it is easy to show that the map 
		$ t \mapsto  Q_K [ f(t)] \in L^1(\R^d )$  is continuous. 
		It then  follows   that   $  f \in C^1 \big(  (0,T_*)  , L^1(\R^d) \big)$  and Eq. \eqref{Boltzmann equation} holds in the strong sense. 
		Consequently, we may calculate thanks to a change of variables that 
		\begin{equation}
		\partial_t 
		\int_{ \R^d  }
		f(t,v) \d v
		=
		\int_{\R^d }
		\partial_t
		f(t,v)
		\d v 
		=
		\sum_{K=1}^M
		\int_{  \R^{d} }
		\beta_K Q_K [ f(t  ) ,\ldots, f( t ) ] (v) 
		\  \d v
		= 0 \   , \qquad \forall t \in (0,T_*) \ . 
		\end{equation}
		This finishes the proof. 
	\end{proof}

	\begin{proof}[Proof of Proposition \ref{prop global wp}]
		Let $f \in C^1 \big(  [0,T_*]  , L^1(\R^d )  \big)$
		be the solution to the Boltzmann equation \eqref{Boltzmann equation}, with initial data $f_0$ satisfying $\int_{  \R^{d} }f_0 (v ) \d v  = 1$ and $\|  f_0\|_{L^1} \leq 1$, whose existence is guaranteed by Lemma \ref{lemma local wp}. 
		Our goal will be to show that
		$ \| f(t) \|_{L^1} \leq 1$ for all $ t \in (0,T_*)$, after possibly reducing $T_*$ by a constant depending only on $M$ and $\{  \beta_K \}_{K=1}^M$. 
		One may then patch the solutions obtained by Lemma \ref{lemma local wp} to obtain global well-posedness. 
		
		\vspace{1mm}
		
		First, we note that thanks to   conservation of mass,
		the collisional operators given in \eqref{collision operators}, when acting on $f$,  may be written as
		\begin{equation}
		Q_K [  f(t) , \ldots, f(t)  ]
		=
		Q_K^{(+)} [f (t) , \ldots, f(t)]
		- 
		 K f(t) \ , \qquad K =1 , \ldots, M \ , 
		\end{equation}
		where  $Q_K^{(+)} : L^1 (\R^d)^K \rightarrow L^1 (\R^d )$ corresponds to the 
		\textit{gain term}
		\begin{equation}
		Q_K^{(+)} [f_1 , \ldots, f_K]
		(v_1)
		=
		 K 
		\int_{\S_K \times \R^{dK}}
		(   \otimes_{\ell=1}^K f_\ell )  (T^\omega_K V_{K})
		\ 
		\d b_K (\omega)
		\d v_{2}
		\cdots 
		\d v_K \ . 
		\end{equation}
		Consequently, $f$ satisfies the equation 
		\begin{equation}
		\textstyle 
		\partial_t f 
		+ 
		\alpha 
		f 
		=
		\sum_{K =1 }^M \beta_K
		Q_K^{(+)}
		[ f, \ldots, f  ]  
		\end{equation}
		where $\alpha = \sum_{K=1}^M \beta_K K > 0 $. 
		Thus, Duhamel's formula   implies that    
		\begin{equation}\label{integral gain term}
		f(t)
		=
		e^{- \alpha t} f_0
		+
		\int_0^t
		e^{- \alpha (t -s )}
		\sum_{K =1 }^M \beta_K
		Q_K^{(+)}
		[ f(s), \ldots, f(s) ]  
		\d s \ , \qquad t \in (0,T_*) \ . 
		\end{equation}
		Next,  we  adapt the main ideas of the authors in \cite{gamba}\footnote{We note that the authors consider Picard iterates for an 
		equation similar to ours, but in Fourier space and in different functional spaces.} 
		and give only a sketch of the proofs. Indeed, we consider the sequence of Picard iterates $ \{  f_n \}_{n \geq 0 }$ defined as 
		\begin{align}
		\textstyle 
		f_0 (t)  & := f_0  \\ 	
		\textstyle 
		f_{n+1} (t) &  := e^{- \alpha t} f_0
		+
		\int_0^t
		e^{- \alpha (t -s )}
		\sum_{K =1 }^M \beta_K
		Q_K^{(+)}
		[ f_n (s), \ldots, f_n (s) ]  
		\d s \ \qquad  n \geq 1 \ .
		\end{align}
	We will use the iterates to show that $ \|  f(t) \|_{L^1 } \leq 1$. Indeed, if $ \| f_0 \|_{L^1} \leq 1$, an induction argument shows that $ \| f_n(t) \|_{L^1 }   \leq 1$ for all $ n \geq 0 $ and $ t \in (0,T_*)$. 
		Next, note that the gain operators $Q_K^{(+)}$ satisfy the estimate \eqref{continuity estimate 2}
		with a possibly different constant. 
		Consequently, for a possibly smaller  $T_*$, the following contraction estimate is satisfied thanks to 
		\eqref{continuity estimate 2}
		\begin{equation}
		\sup_{t\in [0,T_*]}
		\| f_{n+1 } (t) - f_n  (t)	\|_{L^1} 
		\leq   \lambda 
		\sup_{t\in [0,T_*]}
		\| f_{n } (t) - f_{n-1}  (t)	\|_{L^1}  \ , \qquad n \geq 1 
		\end{equation}
		for some fixed $\lambda \in (0,1)$. 
		Thus, the sequence $f_n $ converges to the (unique) solution of Eq. \eqref{integral gain term}. 
		We conclude  that    $\|  f(t) \|_{L^1} = \lim_{n\rightarrow \infty} \| f_n(t) \|_{L^1} \leq 1 $ for all 
		$t \in (0,T_* )$. This finishes the proof. 
	\end{proof}

\appendix 
	\section{Markov Processes}\label{appendix markov}
	
	\subsection{Review of the general theory}
	We give  a brief review of the   basic notions and results from the theory of  Markov processes that we use to construct our   model; we follow closely the discussion in \cite[Chapter 4]{EthierKurtz}.
	In what follows, we let  $(\Sigma, \mathscr{F}, \mathbb{P})$ be a probability space and
	$ E$ a locally     compact metric space, with its Borel sets $\mathscr{B}(E)$. 
	
	\vspace{1mm}
	
	\noindent \textit{Continuous time.} Let us define what we understand for a (continuous-time) Markov process. 
	
	\begin{definition}
		A stochastic process
		$
		\bX  =    (    X(t)   ) _{   t = 0 }^\infty  :  \Sigma \times [ 0, \infty ) \rightarrow E 
		$
		is called a \textit{Markov process} if 
		\begin{equation}
		\mathbb{P}
		\Big(
		X( t + s ) \in B 
		\,   \big| \, 
		\mathscr{F}_t^X 
		\Big) 
		=
		\mathbb{P} 
		\Big(
		X(t + s ) \in B 
		\,   \big| \, 
		\sigma \big(X( t ) \big)
		\Big) 
		\qquad 
		\forall  t ,s \geq 0 , \ \forall  B \in \mathscr{B}(E) 
		\end{equation}
		where $\mathscr{F}_t^{\bX }  =      \sigma \big(X(  s )   \, : \,   0 \leq s \leq t   \big)$.
	\end{definition}
	
	Some Markov processes are characterized  by more  tractable objects. 
	Indeed,  let   $  (  T(t)   )_{t \geq 0  }$ be a semigroup on   $ C_b (E)$, the bounded real-valued continuous functions on $E$.
	
	\begin{definition}
		We say that the Markov process $\bX $ \textit{corresponds} to $ (  T(t) )_{t \geq 0  }$  if 
		\begin{equation}\label{transition}
		\mathbb{E}      \Big[   \vp \big( X(t +s )   \big)   \big|        \mathscr{F}_t^X   \Big]
		=
		\big( T(s) \vp  \big)  \big(   X(t)    \big)  \qquad \forall t ,s \geq 0  , \ \forall \vp \in C_b (E) \, . 
		\end{equation}
		
	\end{definition}
	
	If a Markov proccess corresponds to a semigroup, it is completely determined by it in the following sense. 
	
	\begin{proposition} \cite[Chapter 4  Proposition 1.6]{EthierKurtz}  
		Let $\bX$ be a Markov process that corresponds to $ (  T(t)  )_{t \geq 0  }$. Then, the finite dimensional distributions of $\bX $ are complety determined by 
		$ (   T(t) )_{t \geq 0  }$   and the   law of $X(0)$. 
	\end{proposition}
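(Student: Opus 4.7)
My plan is to show by induction on $n \in \N$ that for any choice of times $0 = t_0 \leq t_1 \leq \cdots \leq t_n$ and any continuous test functions $\vp_0, \ldots, \vp_n \in C_b(E)$, the joint expectation
\begin{equation*}
\mathbb{E}\Big[ \vp_0(X(t_0)) \vp_1(X(t_1)) \cdots \vp_n(X(t_n)) \Big]
\end{equation*}
is determined entirely by the semigroup $(T(t))_{t\geq 0}$ and the initial law $\mu_0 := \mathbb{P} \circ X(0)^{-1}$. Once this is established for $C_b(E)$-valued functionals, a standard measure-theoretic argument upgrades the conclusion to arbitrary Borel sets, which is exactly the determination of the finite-dimensional distributions.

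For the base case $n=0$, the quantity $\mathbb{E}[\vp_0(X(0))] = \int \vp_0 \, d\mu_0$ only depends on $\mu_0$. For the inductive step, I would use the tower property of conditional expectations together with the correspondence relation \eqref{transition}. Namely, conditioning on $\mathscr{F}_{t_{n-1}}^X$ and applying \eqref{transition} with $s = t_n - t_{n-1}$, one obtains
\begin{equation*}
\mathbb{E}\Big[\prod_{i=0}^n \vp_i(X(t_i))\Big]
=
\mathbb{E}\Big[\prod_{i=0}^{n-2} \vp_i(X(t_i))\,\cdot\, \psi_{n-1}(X(t_{n-1}))\Big],
\end{equation*}
where $\psi_{n-1} := \vp_{n-1} \cdot T(t_n - t_{n-1})\vp_n \in C_b(E)$ (using that $T(s)$ preserves $C_b(E)$ and that the pointwise product of bounded continuous functions is bounded continuous). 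This is a joint expectation with one fewer time point and with a test function built from the semigroup, so the inductive hypothesis applies. Unrolling the recursion yields the explicit formula
\begin{equation*}
\mathbb{E}\Big[\prod_{i=0}^n \vp_i(X(t_i))\Big]
=
\int \vp_0 \cdot T(t_1)\Big[ \vp_1 \cdot T(t_2 - t_1)\big[ \vp_2 \cdots T(t_n - t_{n-1}) \vp_n \big] \Big]  \, d\mu_0,
\end{equation*}
which depends only on $\mu_0$ and $(T(t))_{t \geq 0}$.

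The final, and main, obstacle is passing from test functions $\vp_i \in C_b(E)$ to indicator functions $\mathds{1}_{B_i}$ of Borel sets $B_i \in \mathscr{B}(E)$, which is what is needed to identify the joint law $\mathbb{P}(X(t_0) \in B_0, \ldots, X(t_n) \in B_n)$. Since $E$ is a locally compact metric space, the algebra $C_b(E)$ is measure-determining, so two probability measures on $E^{n+1}$ that agree on all products $\vp_0 \otimes \cdots \otimes \vp_n$ with $\vp_i \in C_b(E)$ must coincide on the product Borel $\sigma$-algebra. I would invoke this via a monotone class / Dynkin $\pi$-$\lambda$ argument: the class of bounded Borel functions for which the determination holds is closed under bounded monotone limits and contains $C_b(E)$, hence it contains all bounded Borel functions and in particular indicators. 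This completes the proof that all finite-dimensional distributions of $\bX$ are functionals of $\mu_0$ and the semigroup.
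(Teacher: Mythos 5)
Your proposal is correct: the iterated-conditioning identity $\mathbb{E}[\prod_i \vp_i(X(t_i))] = \int \vp_0 \cdot T(t_1)[\vp_1 \cdots T(t_n-t_{n-1})\vp_n]\, d\mu_0$, followed by a monotone class argument to pass from $C_b(E)$ to Borel indicators, is precisely the standard proof of this fact. The paper itself offers no proof and simply cites \cite[Chapter 4, Proposition 1.6]{EthierKurtz}; your argument is essentially the one given there, so there is nothing to reconcile.
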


	\vspace{1mm}
	
	\noindent \textit{Discrete time}. 
	Let us define what we understand as a (discrete-time) Markov chain.

	\begin{definition}
		A discrete-time stochastic process
		$ \textbf{Y } = (  Y(k ) )_{k \in \N_0} : \Sigma \times \N_0 \rightarrow E  $	                is called a  \textit{Markov chain }
		if
		\begin{equation}
		\mathbb{P}
		\Big(
		Y(  n + k  ) \in B 
		\,   \big| \, 
		\mathscr{F}_n^Y
		\Big) 
		=
		\mathbb{P} 
		\Big(
		Y   (  n+ k  ) \in A 
		\,   \big| \, 
		\sigma \big(    Y   (n ) \big)
		\Big) 
		\qquad 
		\forall   n , k \in \N_0  , \ \forall  B \in \mathscr{B}(E) \, .
		\end{equation}
		where
		$\mathscr{F}_n^{ \textbf{Y } } = \sigma \big(  Y( k )  : k \in \{ 0 , \ldots,  n  \}   \big)  $.
	\end{definition}
	
	Similarly as before, we can specify Markov chains in terms of more concrete objects. To this end, we define transition functions. 
	
	\begin{definition}
		
		A function $ \mu : E \times \mathscr{B}(E) \rightarrow [ 0 , \infty  )$  is called a 
		\textit{transition function}
		if
		\begin{align}
		\begin{cases}
		& \mu(x , \cdot ) \in \textnormal{Prob} (E) \, , \qquad \forall x \in E   \\
		& \mu ( \cdot, B ) \in L^\infty (E) \, , \qquad \forall B \in \mathscr{B} (E)  
		\end{cases} \, .
		\end{align}
	\end{definition}
	
	\begin{definition}
		We say that the Markov chain $\textbf{Y}$	     has $\mu $ as a transition function, if 
		\begin{equation}
		\mathbb{P}
		\Big(
		Y(  n + k  ) \in B 
		\,   \big| \, 
		\mathscr{F}_n^Y
		\Big) 
		=
		\mu \big(   Y(n) , B     \big) \, , \qquad n\in \N_0 , \ B \in \mathscr{B}(E) \, .
		\end{equation}
	\end{definition}
	
	Heuristically, transition functions correspond to  the probabilities for the Markov chain to go from one state to the next one. That one  may always construct Markov chains with prescribed transition functions and initial laws is the content of the following result.

	\begin{proposition}\label{PropEK}\cite[Chapter 4 Theorem 1.1]{EthierKurtz}
		For every transition function $\mu$ and probability measure $\nu \in \textnormal{Prob}(E)$, there exists a Markov chain $ \textbf{Y} $  that has $\mu$ as a transition function and   $\nu$ as  the law of $Y(0)$. 
	\end{proposition}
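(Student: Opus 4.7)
The plan is to realize the Markov chain as the coordinate process on the infinite product space $\Sigma = E^{\N_0}$, following the classical Ionescu--Tulcea construction. First I would equip $\Sigma$ with the product $\sigma$-algebra $\mathscr{F} = \bigotimes_{k \geq 0} \mathscr{B}(E)$ and define $Y(k) : \Sigma \to E$ to be the $k$-th coordinate projection. These projections will be the candidate Markov chain; what remains is to construct a probability measure $\P$ on $(\Sigma, \mathscr{F})$ with the correct finite-dimensional marginals.

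Next, I would define a consistent family of finite-dimensional distributions. For each $n \in \N_0$ and $A_0, \ldots, A_n \in \mathscr{B}(E)$, set
\begin{equation*}
\P_n(A_0 \times \cdots \times A_n) := \int_{A_0} \nu(d x_0) \int_{A_1} \mu(x_0, d x_1) \cdots \int_{A_n} \mu(x_{n-1}, d x_n).
\end{equation*}
The measurability of $\mu(\cdot, B)$, combined with a standard monotone class argument, shows that each iterated integral is well defined and that $\P_n$ extends uniquely to a probability measure on $(E^{n+1}, \mathscr{B}(E)^{\otimes (n+1)})$. Consistency of the family $\{\P_n\}_{n \geq 0}$ -- namely, that the marginal of $\P_{n+1}$ on the first $n+1$ coordinates equals $\P_n$ -- follows immediately from $\mu(x_n, E) = 1$ for all $x_n \in E$.

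I would then invoke the Ionescu--Tulcea theorem to obtain a (unique) probability measure $\P$ on $(\Sigma, \mathscr{F})$ whose $(n+1)$-dimensional marginal is $\P_n$ for every $n$. This is the step where the main subtlety lies: unlike Kolmogorov's extension theorem, Ionescu--Tulcea does not require topological hypotheses beyond measurability of the kernels, which is precisely the setting given by the definition of a transition function. By construction $\P \circ Y(0)^{-1} = \nu$, and for any bounded measurable $\vp : E \to \R$ one obtains, by specializing the explicit form of $\P_{n+1}$ and again a monotone class argument,
\begin{equation*}
\E\bigl[\vp(Y(n+1)) \, \big| \, \mathscr{F}_n^{\textbf{Y}}\bigr] = \int_E \vp(y) \, \mu(Y(n), dy) \qquad \P\text{-a.s.}
\end{equation*}

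Taking $\vp = \mathds{1}_B$ for $B \in \mathscr{B}(E)$ yields $\P(Y(n+1) \in B \mid \mathscr{F}_n^{\textbf{Y}}) = \mu(Y(n), B)$, which is exactly the defining property of a Markov chain with transition kernel $\mu$ and initial law $\nu$. The main technical obstacle is the careful propagation of measurability through the iterated kernel integrals; this is standard but must be executed via the $\pi$--$\lambda$ theorem to ensure that each $\P_n$ is genuinely well defined on the product $\sigma$-algebra, and that the conditional expectation identity above holds for all bounded Borel $\vp$ rather than merely for continuous bounded ones.
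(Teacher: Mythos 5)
Your proposal is correct: the paper itself gives no proof of this proposition, citing it directly as Theorem 1.1 of Chapter 4 in Ethier--Kurtz, and your Ionescu--Tulcea construction of the coordinate process on $E^{\N_0}$ with the iterated-kernel finite-dimensional distributions is precisely the standard argument underlying that cited result. Your emphasis on the fact that Ionescu--Tulcea requires only measurability of the kernel (no topological hypotheses beyond what the definition of a transition function already supplies) is the right observation, and the verification of the Markov property via the explicit form of $\P_{n+1}$ and a monotone class argument is sound.
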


	\vspace{1mm}
	
	\noindent \textit{Jump processes.}
	If one is given a transition function $\mu $, one may construct Markov processes with  explicit transition semigroups; these are called jump processes, which we describe below.

	\begin{definition}
		Let $\textbf{Y}$	                be a Markov chain with transition function $\mu$.
		We define its \textit{generator} to be the linear map 
		$P : C(E) \rightarrow C(E)$ given by 
		\begin{equation}
		(P \vp)(x) := \int_{E} \vp(y )  \mu (x ,  d  y ) \qquad x \in E , \ \vp \in C_b(E) \, . 
		\end{equation}	
	\end{definition}

	Let $  (   M (t  ) )_{t  =  0 }^\infty  $	 denote  a Poisson process with parameter $\lambda$, independent of $ \textbf{Y}$. We let the \textit{jump process} associated to $   \textbf{Y} $ with parameter $ \lambda$              as the stochastic process 
	$ \textbf{V} =  (   V(t)  )_{ t   =  0 }^\infty  $ defined  by  
	\begin{equation}\label{jump}
	V(t)  : =  Y   \big(  M(t)    \big)   , \qquad t \geq 0 \, . 
	\end{equation}
	
	\begin{proposition}\cite[Chapter 4 Section 2]{EthierKurtz}
		The stochastic process	$  \textbf{V} $ 
		defined by \eqref{jump}
		is a Markov process that corresponds to the semigroup 
		$  \{     \exp\big(      t \, \lambda    (   P - \textnormal{Id}         )      \big)  \}_{t \geq 0 }$. 
	\end{proposition}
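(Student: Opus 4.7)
The plan is to compute $\mathbb{E}[\varphi(V(t+s)) \mid \mathscr{F}_t^V]$ directly for $\varphi \in C_b(E)$ and verify that it equals $(T(s)\varphi)(V(t))$ with $T(s) := \exp(s\lambda(P - \textnormal{Id}))$. Once this identity is established, the right-hand side is $\sigma(V(t))$-measurable, which immediately yields both the Markov property of $\textbf{V}$ and the correspondence to the semigroup $(T(t))_{t\geq 0}$ in the sense of \eqref{transition}.

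First I would introduce the increment $K := M(t+s) - M(t)$. By the defining property of a Poisson process of rate $\lambda$, $K$ is independent of the history $\mathscr{F}_t^M$ and Poisson$(\lambda s)$-distributed. Writing $V(t+s) = Y(M(t) + K)$, I would condition on the enlarged $\sigma$-algebra $\mathscr{G}_t := \sigma(\textbf{Y}) \vee \mathscr{F}_t^M$, which contains $\mathscr{F}_t^V$ since $V(u) = Y(M(u))$ for $u \leq t$. Relative to $\mathscr{G}_t$, the entire chain $\textbf{Y}$ and the integer $M(t)$ become known, while $K$ remains Poisson$(\lambda s)$ and independent of $\mathscr{G}_t$.

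Second, I would invoke the Markov property of $\textbf{Y}$ at the ($\mathscr{G}_t$-measurable) index $M(t)$. On the event $\{M(t) = n\}$, the deterministic-time identity $\mathbb{E}[\varphi(Y(n+k))\mid \mathscr{F}_n^Y] = (P^k \varphi)(Y(n))$ together with the independence of $\textbf{Y}$ from $M$ delivers, after disintegrating over $n$, the relation $\mathbb{E}[\varphi(Y(M(t)+k))\mid \mathscr{G}_t] = (P^k\varphi)(V(t))$ for every $k \in \mathbb{N}_0$. Averaging over the Poisson distribution of $K$ and interchanging summation with conditional expectation (justified by dominated convergence, since $\|P^k\varphi\|_\infty \leq \|\varphi\|_\infty$) gives
\begin{equation*}
\mathbb{E}[\varphi(V(t+s)) \mid \mathscr{G}_t]
= e^{-\lambda s}\sum_{k = 0}^\infty \frac{(\lambda s)^k}{k!}(P^k\varphi)(V(t))
= \bigl(\exp(s\lambda(P-\textnormal{Id}))\varphi\bigr)(V(t)).
\end{equation*}
Since the right-hand side is $\sigma(V(t))$-measurable, the tower property $\mathbb{E}[\,\cdot\mid \mathscr{F}_t^V] = \mathbb{E}[\mathbb{E}[\,\cdot\mid \mathscr{G}_t]\mid \mathscr{F}_t^V]$ transfers the identity to the smaller filtration $\mathscr{F}_t^V$, simultaneously establishing the Markov property and the claimed form of the semigroup.

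The main point to handle with care is the application of the Markov property of $\textbf{Y}$ at the random index $M(t)$. This is not a genuine optional-stopping issue for $\textbf{Y}$, because $M(t)$ is constructed from an independent source (the Poisson clock), so conditioning on $\{M(t) = n\}$ reduces the question to the ordinary discrete-time Markov property. Boundedness of $P$ on $C_b(E)$---automatic since $\mu(x,\cdot)$ is a probability measure, giving $\|P\|_{C_b \to C_b} \leq 1$---ensures that the operator exponential $\exp(s\lambda(P-\textnormal{Id}))$ converges in operator norm, so the Taylor series manipulation above is rigorous and the semigroup property $T(s_1)T(s_2)=T(s_1+s_2)$ follows from the standard functional calculus for bounded operators.
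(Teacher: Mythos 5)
Your overall strategy is the same as the paper's: decompose $V(t+s)=Y(M(t)+K)$ with $K=M(t+s)-M(t)$ an independent Poisson$(\lambda s)$ increment, condition, apply the discrete-time relation $\mathbb{E}[\varphi(Y(n+k))\mid\mathscr{F}^Y_n]=(P^k\varphi)(Y(n))$, and resum the Poisson series into $\exp(s\lambda(P-\textnormal{Id}))$. You are in fact more explicit than the paper, which only invokes ``the memoryless property and the Law of Total Probability'' and leaves the conditioning $\sigma$-algebra unspecified; your remarks on $\|P\|_{C_b\to C_b}\le 1$ and the norm convergence of the operator exponential are correct and welcome.

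However, there is one genuine flaw: your enlarged $\sigma$-algebra $\mathscr{G}_t:=\sigma(\textbf{Y})\vee\mathscr{F}^M_t$ contains the \emph{entire} chain $\textbf{Y}$, so for each fixed $k$ the random variable $\varphi(Y(M(t)+k))$ is already $\mathscr{G}_t$-measurable and
\begin{equation*}
\mathbb{E}\bigl[\varphi(Y(M(t)+k))\mid\mathscr{G}_t\bigr]=\varphi(Y(M(t)+k))\neq (P^k\varphi)(V(t))
\end{equation*}
in general; the intermediate identity you assert is false as stated, precisely because conditioning on $\mathscr{G}_t$ does nothing to functionals of the future of $\textbf{Y}$. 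The fix is to condition instead on the stopped history $\mathscr{G}_t:=\sigma\bigl(Y(j\wedge M(t)):j\ge 0\bigr)\vee\mathscr{F}^M_t$, or simply to disintegrate over the events $\{M(t)=n\}\cap\{K=k\}$ and use $\mathscr{F}^Y_n$ directly, as the paper implicitly does. The increment $K$ remains independent of this smaller $\sigma$-algebra, it still contains $\mathscr{F}^V_t$, and on $\{M(t)=n\}$ the discrete Markov property of $\textbf{Y}$ now legitimately produces $(P^k\varphi)(Y(n))$. With that single correction the rest of your argument --- the interchange of sum and conditional expectation, the resummation, and the tower-property descent to $\mathscr{F}^V_t$ --- goes through and yields both the Markov property and the stated semigroup.
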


	Here we will give a sketch of the proof of the above proposition. 
	\begin{proof}
		Let $\phi \in C(E)$. The transition semigroup $T(t)$ for the Markov process $V(t)$ is defined as $T(s) \phi(V(t)) = \mathbb{E}[\phi(V(t+s)|\mathscr{F}_t]$. Using the memoryless property of the Poisson process $M(t)$ along with the Law of Total Probability we can calculate,

		\begin{align}
		T(s)\phi(V(t))  \nonumber 
		&= 
		\mathbb{E}[\phi(V(t+s))|\mathscr{F}_t] = \mathbb{E}[\phi(Y((M(t+s))))|\mathscr{F}_t] = \mathbb{E}[\phi(Y(M(t+s)-M(t)+M(t)))|\mathscr{F}_t] \\
		&= \sum_{k\geq 0} \mathbb{P}(M(t+s)-M(t) = k)       \nonumber  \mathbb{E}[\phi(Y(k+M_t))|\mathscr{F}_t] =\sum_{k\geq 0} e^{-\lambda s} \frac{(\lambda s)^k}{k!} P^k \phi(V(t)) \\
		&= \exp{(t\lambda(P - \text{Id}))} \phi(V(t))        
		\end{align}
	\end{proof}           
	
	\begin{definition} \label{poisson process}
	    A \textit{Poisson process} $M(t)$ with rate $\lambda >0$ is a stochastic process taking values on $\mathbb{N}$ with the conditions: 
	    \begin{enumerate}
	        \item M(0) = 0.
	        \item For all $s_i < t_i$ the increments $M(t_i) - M(s_i)$ are independent random variables.
	        \item $\mathbb{E}[M(t)] = \lambda t$.
	    \end{enumerate}
	    Furthermore, the Poisson process is a Markov process and thus has the ``memoryless" property, implying that its increments satisfy, \begin{equation}
	        M(t_i) - M(s_i) = M(t_i - s_i) \quad \forall s_i < t_i.
	    \end{equation}
	    
	\end{definition}

	%\bibliographystyle{plain}
	%\bibliography{biblio}

\end{document}